\documentclass[10pt]{article}
\usepackage[utf8]{inputenc}
\usepackage{amsmath}
\usepackage{amssymb}
\usepackage{amsthm}
\usepackage{comment}
\usepackage{graphicx}
\usepackage[margin=1.1 in]{geometry}
\usepackage{xcolor}

\usepackage{enumitem}

\newtheorem{thm}{Theorem}[section]
\newtheorem{lemma}[thm]{Lemma}
\newtheorem{prop}[thm]{Proposition}

\newtheorem{cor}[thm]{Corollary}
\theoremstyle{definition}
\newtheorem{defin}[thm]{Definition}
\newtheorem{ex}[thm]{Example}
\newtheorem{rem}[thm]{Remark}

\newtheorem{assu}[thm]{Assumption}
\newtheorem{que}[thm]{Question}
\newtheorem{claim}[thm]{Claim}


\def\calC{\mathcal{C}}

\def\calE{\mathcal{E}}

\def\calN{\mathcal{N}}

\def\calQ{\mathcal{Q}}

\def\calV{\mathcal{V}}

\def\calX{\mathcal{X}}

\newcommand{\N}{\mathbb{N}}
\newcommand{\R}{\mathbb{R}}
\newcommand{\C}{\mathbb{C}}

\newcommand{\Z}{\mathbb{Z}}

\newcommand{\E}{\mathbb{E}}
\newcommand{\T}{\mathbb{T}}

\def \eps {\varepsilon}

\def\br#1{\left(#1\right)}
\def\brb#1{\left[#1\right]}

\def\tr{\textup{tr}}

\title{On the limiting behaviour of arithmetic toral eigenfunctions}
\author{Riccardo W. Maffucci and Alejandro Rivera}
\date{}
\newcommand{\Addresses}{{

  \bigskip\footnotesize
  
  R.W.~Maffucci, \textsc{EPFL, MA SB Batiment 8, Lausanne, Switzerland.}\par\nopagebreak
  \texttt{riccardo.maffucci@epfl.ch}
  
  \bigskip
  
  A.~Rivera, \textsc{EPFL, MA SB Batiment 8, Lausanne, Switzerland.}\par\nopagebreak
  \texttt{alejandro.rivera@epfl.ch}
}}

\begin{document}

\maketitle

\begin{abstract}

We consider a wide class of families $(F_m)_{m\in\N}$ of Gaussian fields on $\T^d=\R^d/\Z^d$ defined by 
\[
F_m:x\mapsto \frac{1}{\sqrt{|\Lambda_m|}}\sum_{\lambda\in\Lambda_m}\zeta_\lambda e^{i2\pi\langle \lambda,x\rangle}
\]
where the $\zeta_\lambda$'s are independent standard normals and $\Lambda_m$ is the set of solutions $\lambda\in\Z^d$ to the equation $p(\lambda)=m$ for some fixed elliptic polynomial $p$ with integer coefficients. The case $p(x)=x_1^2+\dots+x_d^2$ amounts to considering a random Laplace eigenfunction whose law is sometimes called the \textit{arithmetic random wave} and has been studied in the past by many authors. In contrast, we consider three classes of polynomials $p$: a certain family of positive definite quadratic forms in two variables, all positive definite quadratic forms in three variables except the multiples of $x_1^2+x_2^2+x_3^2$, and a wide family of polynomials in many variables.\\

For these three classes of polynomials, we study the $(d-1)$-dimensional volume $\calV_m$ of the zero set of $F_m$. We compute the asymptotics, as $m\to+\infty$ along certain well chosen subsequences of integers, of the expectation and variance of $\calV_m$. Moreover, we prove that in the same limit, $\frac{\calV_m-\E[\calV_m]}{\sqrt{\textup{Var}(\calV_m)}}$ converges to a standard normal.\\

As in previous analogous works on this topic for the arithmetic random wave, a very general method reduces the problem of these asymptotics to the study of certain arithmetic properties of the sets of solutions to $p(\lambda)=m$. More precisely, we need to study the number of such solutions for a fixed $m$, as well as the number of quadruples of solutions $(\lambda,\mu,\nu,\iota)$ satisfying $\lambda+\mu+\nu+\iota=0$, a.k.a. $4$-correlations, and the rate of convergence of the (rescaled) counting measure of $\Lambda_m$ towards a certain limiting measure on the hypersurface $\{p(x)=1\}$. To this end, we use many previous results on this topic but also prove a new estimate on correlations which may be of independent interest.
\end{abstract}
{\bf Keywords:} Gaussian fields, limiting theorems, lattice points on manifolds, equidistribution, lattice point correlations, Kac-Rice formulas, Wiener chaos.
\\
{\bf MSC(2010):} 11D72, 28C20, 35P20, 60G60, 11D45, 11P21.

\tableofcontents

\section{Introduction}
\subsection{Nodal sets of random eigenfunctions}

The Laplace-Beltrami operator on the flat torus $\T^d=\R^d/\Z^d$ has a discrete spectrum and the dimension of the eigenspaces $E_m$ increases with the eigenvalue $m>0$. Given $\psi_m\in E_m\setminus\{0\}$, the nodal set of $\psi_m$, i.e. $\psi_m^{-1}(0)$, is the union of smooth hypersurface of $\T^d$, and a subset of lower Hausdorff dimension \cite{cheng1}. The geometry and topology of this nodal set may vary with $\psi_m$ but is subject to some restrictions depending on $m$, both for small values of $m$ and in the asymptotic regime $m\to +\infty$, which is the focus of the present document (see for instance \cite{zelditch_survey} for a survey of results on this topic).\\

By the real analytic case of Yau's conjecture \cite{yau982,yau993}, we know that uniformly in the choice of non-zero $\psi_m$,
\begin{equation}
\label{yau}
m^{1/2}\ll \text{Vol}(\{x\in\T^d : \psi_m(x)=0\}) \ll m^{1/2}\, .
\end{equation}

In an effort to understand the variation of this quantity within these constraints, several authors have studied the behaviour of $F_m^{-1}(0)$ where $F_m$ is a random element of $E_m$ (see for instance \cite{berard_1985}, \cite{rudwi2}, \cite{krkuwi}, \cite{mprw00}, \cite{benmaf}, \cite{cammar}, \cite{chelaa}). The two most popular probability measures, chosen because they respect the symmetries of the model, are defined as follows. The $L^2$-scalar product endows $E_m$ with a Euclidean structure. The random function $F_m$ is usually chosen to be either uniform on the unit sphere of $E_m$, or a standard Gaussian vector on $E_m$, i.e., with density $\frac{1}{(2\pi)^{\textup{dim}(E_m)/2}}e^{-\frac{1}{2}|v|_{L^2}^2}$ with respect to the Lebesgue measure $dv$ on $E_m$ (here $|\cdot|_{L^2}$ denotes the $L^2$-norm). Since the Gaussian measure is rotation invariant and the nodal set of $F_m$ is invariant by scaling of $F_m$ by positive constants, the law of $F_m^{-1}(0)$ is the same, whichever choice one makes between uniform and Gaussian.\\

In the present paper, we will focus on the study of the volume of the nodal set $\calV_m=\textup{Vol}(F_m^{-1}(0))$. This volume may be defined as the $(d-1)$-dimensional Hausdorff measure of $F_m^{-1}(0)$ or, equivalently, as the Riemannian volume measure whenever $F_m^{-1}(0)$ is smooth (which, in the cases we are interested in, will occur with probability one).\\

\subsection{State of the art on nodal length of random Laplace eigenfunctions on the torus}\label{ss:state_of_the_art}
In \cite{rudwi2} Rudnick and Wigman adapted an argument by Bérard (see \cite{berard_1985}) to show that, s $m\to+\infty$ along a well chosen sequence of density one among the integers $m$ for which $E_m\neq\{0\}$,
\begin{equation}
\label{rw2008}
\mathbb{E}[\mathcal{V}_m]=\sqrt{\frac{4\pi}{d}}\frac{\Gamma\left(\frac{d+1}{2}\right)}{\Gamma\left(\frac{d}{2}\right)}m^{1/2}
\end{equation}
which is consistent with Yau's conjecture \eqref{yau}. In \cite{krkuwi}, Krishnapur, Kurlberg and Wigman showed the precise asymptotic behaviour of the variance in the case $d=2$. For any subsequence of energies $(m_k)_k$ such that the multiplicities $\mathcal{N}_{m_k}\to\infty$, one has \cite[Theorem 1.1]{krkuwi}
\begin{equation}
\label{length}
\text{Var}(\mathcal{V}_m)=c_{m_k}\frac{m_k}{\mathcal{N}_{m_k}^2}(1+o(1)),
\end{equation}
where $\calN_m=\textup{dim}(E_m)$, and the positive real numbers $c_{m_k}$ depend on the \textit{limiting angular distribution} of $\Lambda_{m_k}$ -- the asymptotics are \textit{non-universal} \cite[section 1.2]{krkuwi}. The order of magnitude of \eqref{length} is smaller than the previously conjectured $\frac{m}{\calN_m}$ \cite{rudwi2}: remarkably, terms of order $\frac{m}{\calN_m}$ in the nodal length variance cancel out perfectly. This effect was called \textit{arithmetic Berry cancellation} in \cite[section 1.6]{krkuwi}, after ``Berry's cancellation phenomenon'' \cite{berry2,wispha}.\\

Subsequently it was shown \cite{mprw00} that the limiting nodal length \textit{distribution} is \textit{non-Gaussian}, and non-universal, depending again on the angular distribution of the lattice points \cite[Theorem 1.1]{mprw00}.\\

The three-dimensional case $p=x_1^2+x_2^2+x_3^2$ was investigated by Benatar and the first author \cite{benmaf}. As $m\to\infty$, $m\not\equiv 0,4,7 \pmod 8$, we have \cite[Theorem 1.2]{benmaf}
\begin{equation}
\label{eqn:benmaf}
\text{Var}(\calV_m)=\frac{m}{\mathcal{N}_m^2}\cdot\left[\frac{32}{375}+O\left(\frac{1}{m^{1/28-o(1)}}\right)\right].
\end{equation}
The congruence assumption is natural, as detailed in section \ref{sec:countlp}. The order of magnitude matches that of the $2$-dimensional case: $\T^3$ exhibits arithmetic Berry cancelation as well. In contrast to $d=2$, the leading term does not fluctuate, as lattice points on spheres {\em equidistribute in the limit} (see section \ref{sec:el23}).\\

Subsequently, Cammarota \cite{cammar} found that the limiting distribution of the nodal volume is non-Gaussian \cite[Theorem 1]{cammar},
\begin{equation}
\label{eqncammar}
\lim\frac{\mathcal{V}_m-\mathbb{E}[\mathcal{V}_m]}{\sqrt{\text{Var}(\mathcal{V}_m)}}=^d \frac{5-\chi}{\sqrt{10}}
\end{equation}
wherer the limit is taken as $m\to\infty$, $m\not\equiv 0,4,7 \pmod 8$, where $\chi$ is a chi-square r.v. with $5$ degrees of freedom, and $=^d$ denotes convergence in distribution.\\

Recently Cherubini and Laaksonen \cite{chelaa} gave an upper bound for the variance when $d\geq 4$,
\[\text{Var}(\calV_m)=O(m\calN_m^{-1-\alpha(d)+\epsilon}),\]
where $\alpha(4)=2/3$ and $\alpha(d)=2/(d-2)$ for $d\geq 5$. In particular, Berry cancellation is observed also for $d\geq 4$.\\

In the present manuscript we consider a wide family of Gaussian fields on $\T^d$ defined as sums of random waves determined by a certain polynomial equation replacing $\lambda_1^2+\dots+\lambda_d^2=m$. We study the asymptotic behaviour of the expectation and variance of $\calV_m$ as $m\to+\infty$ along certain well chosen subsequences and determine the rescaled limiting law. We find that in most cases Berry cancellation does not occur.

\subsection{Setting and main results}\label{ss:main_results}

Throughout this article, we will consider the torus $\T^d:=\R^d/\Z^d$ with the metric induced by the Euclidean metric on $\R^d$. We will consider $p\in\R[X_1,\dots,X_d]$ a $d$-variate polynomial homogeneous of degree $2k$. We will assume that $p$ is \textit{elliptic}, i.e., such that for each $\xi\in\R^d$, $p(\xi)\geq 0$, with equality if and only if $\xi=0$. In particular, the set $\Sigma_p=\{\xi\in\R^d\, :\, p(\xi)=0\}$ is a smooth compact hypersurface of $\R^d$. Our results will depend on the value of $d\in\N$, and on the choice of polynomial $p$.\\

The operator $p(-i\nabla)$ acting on $C^\infty(\T^d)$ has a discrete spectrum containing only eigenvalues and its eigenspaces are finite-dimensional. More precisely, its eigenvalues $m\in\R$ are those for which the following set is non-empty:
\[
\Lambda_{m}:=\{\lambda\in(1/m^{2k})\Z^d\, :\, p(\lambda)=1\}\, .
\]
For each such eigenvalue $m$, the corresponding eigenspace is generated by the $x\mapsto e(m^{1/2k}\langle\lambda,x\rangle)$ where for each $t\in\R$, we write $e(t):=e^{2\pi it}$ and where $\lambda$ ranges over $\Lambda_{m}$. In particular, its dimension is exactly $\calN_{m}:=|\Lambda_{m}|$.\\

If we equip this eigenspace with the $L^2(\T^d)$ scalar product and consider a random vector in this space whose law is the standard Gaussian measure induced by this scalar product. We then divide this vector by $\sqrt{\calN_m}$ and obtain an a.s. smooth Gaussian field $(F_m(x))_{x\in\T^d}$ on $\T^d$, which can be described as follows:
\begin{equation}\label{eq:field_definition}
F_{m}(x)=\frac{1}{\sqrt{\calN_m}}\sum_{\lambda\in\Lambda_m}\zeta_\lambda e(m^{1/2k}\langle\lambda,x\rangle)
\end{equation}
where $(\zeta_\lambda)_{\lambda\in\Lambda_m}$ are complex standard normals defined on a common probability space $(\Xi,\mathcal{F},\mathbb{P})$, independent save for the relation $\zeta_{-\lambda}=\overline{\zeta_\lambda}$.\\

We are interested in the properties of the \textit{nodal set} of $F_m$, i.e., the set $\{F_m(x)=0\}$ as $m\to +\infty$ along some subsequence of eigenvalues $\mathfrak{S}$. The most basic assumption we will make on $\mathfrak{S}$ is the following.
\begin{assu}\label{as:ample}
We assume that $\mathfrak{S}$ is such that $\lim_{m\in\mathfrak{S},m\to+\infty}\calN_m=+\infty$.
\end{assu}
This implies that for $m$ large enough, $\Lambda_m$ must contain two non-colinear vectors. In particular, for each $x$, the eigenvector $(F_m(x),\nabla F_m(x))$ is non-degenerate, which implies that the nodal set of $F_m$ is a.s. a smooth hypersurface of $\T^d$ (see for instance Proposition 6.12 of \cite{azawsc}). In the present work, we will study the asymptotic properties as $m\to+\infty$ in $\mathfrak{S}$ of the volume of the nodal set of $F_m$, or \textit{nodal volume}:
\[
\calV_m:=\textup{Vol}\{x\in\T^d\, :\, F_m(x)\}
\]
Though the random vector $\nabla F_m(0)$ is non-degenerate, it could degenerate asymptotically, which would qualitatively amount to the field concentrating around one frequency direction\footnote{By this we mean that the $\lambda\in\Lambda_m$, which dictate the frequency and direction of oscillation of the waves $e(\langle \lambda,\cdot\rangle)$, would concentrate along a given direction in $\R^d$.}. To avoid such pathological cases, we add an assumption on the eigenvalues of the covariance matrix of $\nabla F_m(0)$. To do so we introduce the scaling parameter
\begin{equation}\label{eq:l_m_def}
L_m=4\pi^2m^{1/k}
\end{equation}
which will naturally appear in the proofs.
\begin{defin}\label{def:omega}
Let $\tilde{\Omega}_m$ (resp. $\Omega_m$) be the covariance matrix of $\nabla F_m(0)$ (resp. $\frac{1}{\sqrt{L_m}}\nabla F_m(0)$). In particular, $\Omega_m=(1/L_m)\tilde{\Omega}_m$.
\end{defin}
We note that by definition, the matrix $\Omega_m$ is bounded from above uniformly in $m$.
\begin{assu}\label{as:nd_eigvals}
The sequence $\mathfrak{S}$ is such that the eigenvalues of $\Omega_m$ are bounded from below uniformly for $m\in\mathfrak{S}$.
\end{assu}
We will always work under Assumptions \ref{as:ample} and \ref{as:nd_eigvals} either by requiring explicitely or by ensuring that they hold. Some of our results will hold for every choice of $d$ and $p$. Others will require some additional assumptions. To express these, we introduce the \textit{degeneracy} of $p$, defined as follows. For each $x\in\R^d$, let $\nabla^2p(x)$ be the Hessian of $p$, i.e., the matrix $(\partial_{x_i}\partial_{x_j} p(x))_{i,j}$. Then, the degeneracy of $p$ is:
\begin{equation}\label{eq:degeneracy}
\overline{d}=d-\inf_{x\in\R^d\setminus\{0\}}\textup{rank}(\nabla^2 p(x))\, .
\end{equation}
We use the terminology \textit{non-degenerate} for $p$ whenever
\begin{equation}\label{eq:non_degenerate_polynomial}
\min\{d,2(d-\overline{d})\}>5\cdot 2^{2k+1}(2k-1).
\end{equation}
\begin{ex}
Assume that $p(x)=x_1^{2k}+\dots+x_d^{2k}$ with $k>0$. Then, $\nabla^2 p(x)$ is diagonal and its $j$th diagonal coefficient is $2k(2k-1)x_j^{2k-2}$ so $\overline{d}=0$ and $p$ is non-degenerate as long as
\begin{equation}\label{eq:diagonal_forms_nd}
d>5\cdot 2^{2k+1}(2k-1)\, .
\end{equation}
\end{ex}
\begin{rem}
\label{rem:41.1}
For ellipsoids ($2k=2$), $\nabla^2 p=v$ for fixed $v\in\R^d$ is simply $Ax=v$ where $A$ is the invertible matrix of the quadratic form $p$. In particular, $\overline{d}=0$ and $p$ is non-degenerate for $d\geq 41$.
\end{rem}

Our first additional assumption is the following:
\begin{assu}\label{as:weak}
Either $2k=2$ (i.e. $p$ is quadratic) and $d\in\{2,3\}$ or $p$ is non-degenerate.
\end{assu}

In order to state our results, we introduce two functionals over the set of symmetric positive matrices, one of which is scalar valued while the other is matrix valued. More precisely, for each positive definite $d\times d$ matrix $\Omega$, let 
\begin{equation}\label{eq:upsilon_psi_definition}
\Upsilon(\Omega)=\int_0^\infty\left(1-\det(I_d+t\Omega)^{-1/2}\right)\frac{dt}{t^{3/2}}\, ;\ \Psi(\Omega)=\int_0^\infty \det(I_d+t\Omega)^{-1/2}(I_d+t\Omega)^{-1}\frac{dt}{t^{1/2}}\, .
\end{equation}
One can easily check that the maps $\Omega\mapsto\Upsilon(\Omega)$ and $\Omega\mapsto\Psi(\Omega)$ are continuous on the space of positive definite $d\times d$ matrices. We obtain the following information about the asymptotic distribution of $\calV_m$ as $m\to+\infty$ along the subsequence $\mathfrak{S}$.
\paragraph{Expected nodal length.}
\begin{prop}[Expected nodal length]
\label{prop:expectation}
Let $d\in\N$ and $p\in\R[X_1,\dots,X_d]$ be elliptic homogeneous of degree $2k$. Then, as $m\to+\infty$ along the spectrum of $p(-i\nabla)$,
\label{propexpect}
\begin{equation}
\label{expect}
\mathbb{E}[\mathcal{V}_m]=\frac{\sqrt{L_m}}{2\pi}\Upsilon(\Omega_m)\, .
\end{equation}
\end{prop}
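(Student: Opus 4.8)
The natural tool is the Kac--Rice formula for the expected volume of a nodal set of a centered Gaussian field. Under Assumptions \ref{as:ample} and \ref{as:nd_eigvals} the field $F_m$ is a.s.\ smooth and the vector $(F_m(x),\nabla F_m(x))$ is nondegenerate for every $x$, so the formula applies:
\begin{equation*}
\mathbb{E}[\mathcal{V}_m]=\int_{\T^d}\mathbb{E}\big[\,|\nabla F_m(x)|\;\big|\;F_m(x)=0\,\big]\,p_{F_m(x)}(0)\,dx\, ,
\end{equation*}
where $p_{F_m(x)}$ is the density of $F_m(x)$. The first step is to exploit stationarity: since the law of $F_m$ is invariant under translations of $\T^d$, the integrand does not depend on $x$, so the integral is just the value of the integrand at $x=0$ times $\mathrm{Vol}(\T^d)=1$. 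One computes $\mathbb{E}[F_m(0)^2]=1$ from \eqref{eq:field_definition} (the normalization by $\sqrt{\mathcal{N}_m}$ is exactly what makes the variance one), so $p_{F_m(0)}(0)=1/\sqrt{2\pi}$. It remains to evaluate $\mathbb{E}[\,|\nabla F_m(0)|\mid F_m(0)=0\,]$.

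The second step is to identify the conditional law of $\nabla F_m(0)$ given $F_m(0)=0$. Because $F_m$ is stationary, $F_m(0)$ and $\nabla F_m(0)$ are uncorrelated — indeed $\mathrm{Cov}(F_m(0),\partial_j F_m(0))$ is, up to constants, $\sum_{\lambda\in\Lambda_m}\lambda_j$, which vanishes by the symmetry $\lambda\mapsto-\lambda$ of $\Lambda_m$ (note $p$ is even). Hence conditioning on $F_m(0)=0$ changes nothing, and $\nabla F_m(0)$ is a centered Gaussian vector with covariance $\tilde\Omega_m=L_m\Omega_m$. Writing $\nabla F_m(0)=\sqrt{L_m}\,Z$ with $Z\sim\mathcal{N}(0,\Omega_m)$, we get $\mathbb{E}[\,|\nabla F_m(0)|\,]=\sqrt{L_m}\,\mathbb{E}[|Z|]$, so
\begin{equation*}
\mathbb{E}[\mathcal{V}_m]=\frac{\sqrt{L_m}}{\sqrt{2\pi}}\,\mathbb{E}\big[\,|Z|\,\big]\, ,\qquad Z\sim\mathcal{N}(0,\Omega_m)\, .
\end{equation*}

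The third and final step is to show that $\sqrt{2\pi}\,\mathbb{E}[|Z|]=\tfrac{1}{\pi}\Upsilon(\Omega_m)$ for $Z\sim\mathcal{N}(0,\Omega)$, i.e.\ to reconcile the probabilistic expression with the integral representation \eqref{eq:upsilon_psi_definition}. The clean way is the identity $\lvert v\rvert=\frac{1}{2\sqrt\pi}\int_0^\infty(1-e^{-t|v|^2/2})\,t^{-3/2}\,dt$ (a Frullani-type / Gamma-function computation, valid since the integrand is $O(t^{-1/2})$ near $0$ and $O(t^{-3/2})$ at infinity). Taking expectations, using Fubini, and the Gaussian integral $\mathbb{E}[e^{-t|Z|^2/2}]=\det(I_d+t\Omega)^{-1/2}$ gives $\mathbb{E}[|Z|]=\frac{1}{2\sqrt\pi}\int_0^\infty(1-\det(I_d+t\Omega)^{-1/2})\,t^{-3/2}\,dt=\frac{1}{2\sqrt\pi}\Upsilon(\Omega)$. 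Combining, $\mathbb{E}[\mathcal{V}_m]=\frac{\sqrt{L_m}}{\sqrt{2\pi}}\cdot\frac{1}{2\sqrt\pi}\Upsilon(\Omega_m)=\frac{\sqrt{L_m}}{2\pi}\Upsilon(\Omega_m)$, as claimed.

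I expect no serious obstacle here: all three steps are standard once the Kac--Rice framework is in place, and the only points requiring a line of care are (i) checking the nondegeneracy hypotheses of Kac--Rice, which are granted by Assumptions \ref{as:ample}--\ref{as:nd_eigvals}, and (ii) justifying the Fubini interchange and convergence of the improper integral defining $\Upsilon$, which follows from the elementary bounds $1-\det(I_d+t\Omega)^{-1/2}=O(t)$ as $t\to 0^+$ and $=1+O(t^{-1/2})$ as $t\to\infty$ together with positive-definiteness of $\Omega_m$ (guaranteed by Assumption \ref{as:nd_eigvals}). If one prefers to avoid the integral identity, an alternative for step three is to note that both $\mathbb{E}[|Z|]$ and $\Upsilon(\Omega)$ are continuous, orthogonally invariant functions of $\Omega$, reduce to the diagonal case by the spectral theorem, and match them there; but the Frullani identity is quicker and gives the constant directly.
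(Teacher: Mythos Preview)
Your approach is essentially the same as the paper's: Kac--Rice plus stationarity, independence of $F_m(0)$ and $\nabla F_m(0)$, then the Berry/Frullani identity to rewrite $\mathbb{E}[|Z|]$ as the integral $\Upsilon(\Omega_m)$. Two points deserve attention.

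First, there is a constant slip. The correct identity is
\[
\sqrt{z}=\frac{1}{\sqrt{2\pi}}\int_0^\infty\bigl(1-e^{-zt/2}\bigr)\,\frac{dt}{t^{3/2}},
\]
so $\mathbb{E}[|Z|]=\frac{1}{\sqrt{2\pi}}\Upsilon(\Omega_m)$, not $\frac{1}{2\sqrt{\pi}}\Upsilon(\Omega_m)$. Your final arithmetic $\frac{1}{\sqrt{2\pi}}\cdot\frac{1}{2\sqrt{\pi}}=\frac{1}{2\pi}$ is in fact false (it equals $\frac{1}{2\pi\sqrt{2}}$); the correct product is $\frac{1}{\sqrt{2\pi}}\cdot\frac{1}{\sqrt{2\pi}}=\frac{1}{2\pi}$, which is how the claimed formula actually arises.

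Second, the proposition as stated does not assume Assumptions~\ref{as:ample} or~\ref{as:nd_eigvals}; it asserts the identity for every $m$ in the spectrum. Your argument invokes these assumptions to guarantee that $\Omega_m$ is nondegenerate so that Kac--Rice applies. The paper instead first proves the formula assuming $\Omega_m$ is nondegenerate (exactly as you do), and then extends to the general case by a short perturbation argument: one notes that $\Upsilon(\cdot)$ extends continuously to nonnegative symmetric matrices, perturbs $F_m$ by an independent nondegenerate field with small coupling $\eps$, and checks that the nodal volume of the perturbed field converges to that of $F_m$ as $\eps\to 0$ (using Bulinskaya's lemma when $\Omega_m\neq 0$). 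This step is missing from your proposal.
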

The proof of Proposition \ref{prop:expectation} may be found in section \ref{ss:integral_expressions} below. It is a direct application of the Kac-Rice formula.
\begin{rem}
If $p(x)=x_1^2+\dots+x_d^2$ then for each $m$, $\Omega_m=I_d/d$ and $\Upsilon(\Omega_m)=\sqrt{\frac{4\pi}{d}}\frac{\Gamma\br{\frac{d+1}{2}}}{\Gamma\br{\frac{d}{2}}}$ (see \eqref{eq:expectation_formula}) which is just $\mathcal{I}_d$ from Proposition 4.1 of \cite{rudwi2} (cf. \eqref{rw2008}). In general $\Upsilon(\Omega_m)$ may depend on $m$ but by Assumption \ref{as:nd_eigvals}, $\Upsilon(\Omega_m)$ is bounded from above and below by positive constants, independent on $m$.
\end{rem}

\paragraph{Variance upper bound.}

\begin{prop}[Variance upper bound]\label{prop:variance_upper_bound}
Fix $d\in\N$ and $p\in\R[X_1,\dots,X_d]$ elliptic homogeneous of degree $2k$ as well as $\mathfrak{S}$ satisfying Assumptions \ref{as:ample} and \ref{as:nd_eigvals}. Then, uniformly for $m\in\mathfrak{S}$,
\begin{equation}
\label{genbd}
\frac{\text{Var}(\calV_m)}{\E[\calV_m]^2}\ll_p\frac{1}{\calN_m}\, .
\end{equation}
\end{prop}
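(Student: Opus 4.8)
The plan is to bound the variance of $\calV_m$ via the Kac-Rice formula for second moments, combined with a Wiener chaos decomposition. First I would recall that, as in the standard arithmetic random wave literature (Krishnapur-Kurlberg-Wigman, Benatar-Maffucci, Cherubini-Laaksonen), the nodal volume admits an $L^2$-expansion $\calV_m = \sum_{q\geq 0}\calV_m[q]$ into Wiener chaoses, where $\calV_m[q]$ is the projection onto the $q$-th chaos of the (rescaled) Gaussian field. The zeroth chaos is $\E[\calV_m]$, and the first and odd chaoses vanish for parity reasons (the nodal volume functional is even in the field). Hence $\text{Var}(\calV_m)=\sum_{q\geq 2}\|\calV_m[q]\|_2^2=\sum_{q\geq 1}\|\calV_m[2q]\|_2^2$, and the task reduces to bounding each even chaotic component.

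Next I would write each $\|\calV_m[2q]\|_2^2$ as a double integral over $\T^d\times\T^d$ of a kernel built from the covariance function $r_m(x-y)=\frac{1}{\calN_m}\sum_{\lambda\in\Lambda_m}e(m^{1/2k}\langle\lambda,x-y\rangle)$ and its derivatives, rescaled by $L_m$. By translation invariance this collapses to a single integral $\int_{\T^d}(\text{something})\,dx$. The key structural input is that, after rescaling by $L_m^{-1/2}$, the covariance $r_m$ and the normalized derivative covariances are uniformly bounded (by $1$ in the diagonal case, and in general by Assumption \ref{as:nd_eigvals} which keeps $\Omega_m$ non-degenerate and bounded). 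Each chaotic term then contributes a factor proportional to $L_m^{(d-1)/d}$-type scaling matched against $\E[\calV_m]^2 \asymp L_m$, and an $L^2$-mass of $r_m$ and its derivatives over $\T^d$. The point is that $\int_{\T^d}|r_m(x)|^2\,dx = \frac{1}{\calN_m^2}\cdot\#\{\lambda\in\Lambda_m\}=\frac{1}{\calN_m}$, and similarly $\int_{\T^d}|\partial^\alpha r_m(x)|^2\,dx \ll_p \frac{1}{\calN_m}$ for each fixed multi-index $\alpha$ (since the frequencies $m^{1/2k}\lambda$ are, up to the fixed scaling, bounded in operator-relevant norms, and summing $|\lambda|^{2|\alpha|}$ over $\Lambda_m$ after dividing by $L_m^{|\alpha|}\calN_m^2$ gives $O(1/\calN_m)$). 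Summing the geometric-type series in $q$ — using that the higher chaoses come with rapidly decaying combinatorial coefficients because $\|r_m\|_\infty\leq 1$ — yields $\text{Var}(\calV_m)\ll_p \E[\calV_m]^2/\calN_m$.

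More concretely, I would argue that $\sum_{q\geq 1}\|\calV_m[2q]\|_2^2 \ll \E[\calV_m]^2 \cdot \sup_{q\geq 1}c_q \cdot \int_{\T^d}\big(|r_m(x)|^2 + \|\nabla^{(1)}_x\tilde r_m\|^2 + \cdots\big)\,dx$ where the $c_q$ are summable constants coming from the Hermite expansion of the Kac-Rice density $\sqrt{|\nabla F_m|^2}$, exactly as in the $d=2,3$ treatments; the uniformity in $m$ of $\|r_m\|_\infty\leq 1$ and of the spectral gap for $\Omega_m$ (Assumption \ref{as:nd_eigvals}) makes these constants uniform. Then the whole integral over $\T^d$ is $O(1/\calN_m)$ by the Parseval identities above, giving \eqref{genbd}. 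This is the route taken by Cherubini-Laaksonen for $d\geq 4$ spheres, adapted here to general elliptic $p$.

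The main obstacle I expect is controlling the higher-order chaoses uniformly in dimension $d$ and degree $2k$: one must show that the full chaotic series converges and that its sum is dominated by the second-chaos-type term times a uniform constant, which requires care with the Hermite-coefficient bounds for the functional $u\mapsto|u|$ in $\R^d$ (whose expansion coefficients decay only polynomially, not geometrically, so one genuinely needs the gain from $\|r_m\|_\infty \leq 1$ strictly bounded away from $1$ on the support of the relevant integrals, or equivalently from the fact that the $L^\infty$ norm only saturates on a set of measure $O(1/\calN_m)$). A secondary technical point is justifying that the derivative-covariance $L^2$-masses are $O(1/\calN_m)$ with the implied constant depending only on $p$ — this uses homogeneity of $p$ to control $|\lambda|$ in terms of $p(\lambda)=1$ on the rescaled lattice, so that $\max_{\lambda\in\Lambda_m}|m^{1/2k}\lambda|^2/L_m$ is bounded by a constant depending only on $p$, and then Parseval again. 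Once these two points are in place, \eqref{genbd} follows by assembling the pieces and dividing by $\E[\calV_m]^2$, which by the remark after Proposition \ref{prop:expectation} is $\asymp L_m$.
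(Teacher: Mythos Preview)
Your approach is plausible but differs substantially from the paper's, and as written it has a real gap.

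The paper does not use the chaos expansion for the variance upper bound at all. Instead it first establishes an \emph{arithmetic formula} (Proposition~\ref{prop:variance}) directly from the Kac--Rice second-moment formula: using Berry's integral trick (Lemma~\ref{lemma:berry_s_method}) to expand $\E[|w_1||w_2|]$ to second order in the perturbation matrices $U_m(x),V_m(x)$, splitting $\T^d$ into a singular set $S_m$ (where $|r_m|$ is close to $1$) and its complement, and controlling all error terms by $L_m|\calC_{4;m}|/\calN_m^4$. This yields
\[
\textup{Var}(\calV_m)=\frac{\pi}{\calN_m}\,\E[\calV_m]^2\cdot\frac{1}{\calN_m}\sum_{\lambda\in\Lambda_m}(1-\Xi_m(\lambda))^2+O\!\left(\frac{L_m|\calC_{4;m}|}{\calN_m^4}\right).
\]
The upper bound then follows in two lines: $(1-\Xi_m(\lambda))^2\ll_p 1$ because $\lambda\in\Sigma_p$ is bounded and $\Upsilon(\Omega_m),\Psi(\Omega_m)$ are bounded above and below by Assumption~\ref{as:nd_eigvals}; and the trivial $|\calC_{4;m}|\leq\calN_m^3$ kills the remainder. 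The chaos expansion appears in the paper only later, to identify $\calV_m[2]$ and prove the CLT.

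Your route via summing $\sum_{q\geq 1}\|\calV_m[2q]\|_2^2$ is in principle viable (and is indeed closer to Cherubini--Laaksonen), but the sketch glosses over the one genuinely delicate point. You write that the series is ``geometric-type'' because $\|r_m\|_\infty\leq 1$, but the Hermite coefficients of $u\mapsto|u|$ decay only polynomially, and $r_m(0)=1$, so on the set where $|r_m|$ is close to $1$ you get no geometric suppression and cannot interchange the sum over $q$ with the integral without justification. Saying ``the $L^\infty$ norm only saturates on a set of measure $O(1/\calN_m)$'' is the right intuition, but you still have to bound the integral of the full two-point intensity over that singular set, which needs a separate argument (in the paper this is Lemma~\ref{lemma:contribution_of_the_singular_set}, proved via a Hessian concavity bound on $r_m$ near its maxima). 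Without that piece, or an equivalent device to make the chaos tail summable uniformly in $m$, your argument does not close.
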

Proposition \ref{prop:variance_upper_bound} will be shown in section \ref{s:main_proofs}. Combining \eqref{genbd} with \eqref{expect}, we deduce, via the Chebychev-Markov inequality, that under the assumptions of Proposition \ref{prop:variance_upper_bound},
we have for all $\eps>0$ and $m\in\mathfrak{S}$,
\begin{equation}\label{eq:consec}
\mathbb{P}\left(\left|\frac{\mathcal{V}_m}{m^{1/k}}-\Upsilon(\Omega_m)\right|>\eps\right)\leq\frac{K}{\eps^2\calN_m}
\end{equation}
for some positive $K$ depending on $p$, which goes to zero as $m\to+\infty$ by Assumption \ref{as:ample}.

\paragraph{Variance asymptotics.}
Under the additional Assumption \ref{as:weak}, we can compute the main term in the asymptotics of the variance, along certain subsequences of integers.\\

In order to express our results, we introduce certain measures on the hypersurface $\Sigma_p=\{\xi\in\R^d\, :\, p(\xi)=1\}$. First, we denote by $dS_p$ the hypersuraface area measure. Then, we define a measure $d\sigma_p$ as follows.

\begin{defin}[Equidistribution measure]\label{def:equidistribution_measure}
We define $d\sigma_p$ to be the measure $\frac{dS_p}{Z_p|\nabla p|}$ where $Z_p>0$ is a normalizing constant so that the total mass of $\sigma_p$ is one. Equivalently, for each $\phi\in C^0(\Sigma_p)$,
\[
\int_{\Sigma_p}\phi(x)d\sigma_p(x):=\frac{1}{|S^{d-1}|}\int_{S^{d-1}} \phi\left(\frac{\omega}{p(\omega)^{1/2k}}\right)d\omega\, .
\]
where $d\omega$ is the Euclidean surface area measure on $S^{d-1}$
\end{defin}
\begin{ex}
If $p(x)=a_1x_1^2+\dots+a_dx_d^2$ then $d\sigma_p(x)$ is proportional to $\frac{dS_p(x)}{\sqrt{a_1^2x_1^2+\dots+a_d^2x_d^2}}$.
\end{ex}
Using the measure $d\sigma_p$ we define the matrix:
\begin{equation}\label{eq:omega_def}
\Omega:=\int_{\Sigma_p} x\otimes x^* d\sigma_p(x)\, .
\end{equation}
In other words, for each $i,j\in\{1,\dots,d\}$, $\Omega_{ij}=\int_{\Sigma_p}x_ix_jd\sigma_p(x)$. Since $d\sigma_p$ and $dS_p$ are mutually absolutely continuous with respect to each other, it is easy to check that $\Omega$ is invertible.

\begin{thm}[Variance asymptotics]\label{thm:variance}
Fix $d\in\N$ and $p\in\R[X_1,\dots,X_d]$ elliptic homogeneous of degree $2k$ satisfying Assumption \ref{as:weak}. For each $m\in\N$, define $\mathcal{E}_m$ whenever $\E[\calV_m]>0$, by
\begin{equation}
\label{var}
\frac{\textup{Var}(\mathcal{V}_m)}{\E[\calV_m]^2}=\frac{\pi}{\calN_m}\int_{\Sigma_p}\left(\Xi(x)-1\right)^2d\sigma_p(x)+\mathcal{E}_m\, .
\end{equation}
where for each $\lambda\in\Sigma_p$,
\[
\Xi(\lambda)=\Upsilon(\Omega)^{-1}\langle\lambda,\Psi(\Omega)\lambda\rangle\, .
\]
Here, $\Omega$ is defined as in \eqref{eq:omega_def}. Then, the remainder term $\mathcal{E}_m$ follows
\begin{enumerate}[label=(\Alph*)]
\item Assume that $2k=2$, $d=2$ and the discriminant of $p$ (see Appendix \ref{s:2D_equidistribution} for its definition) belongs to the set $\mathfrak{D}$ defined in Appendix \ref{s:2D_equidistribution}. Then, there exists a positive density sequence $\mathfrak{S}\subset\N$ such that uniformly for $m\in\mathfrak{S}$,
\[
\mathcal{E}_m=O\left(\calN_m^{-2}\right)\, ;
\]
\item If $2k=2$ and $d=3$ then there exists a positive density sequence $\mathfrak{S}\subset\N$ such that for each $\eps>0$, uniformly for $m\in\mathfrak{S}$,
\[
\mathcal{E}_m=O\left(\calN_m^{-1-\frac{1}{111}+\eps}\right)\textup{ and }\mathcal{E}_m=O\left(m^{-\frac{1}{2}-\frac{1}{222}+\eps}\right)\, ;
\]
\item If $p$ is non-degenerate (see \eqref{eq:non_degenerate_polynomial}), then, there exists some $\delta>0$ and a positive density sequence $\mathfrak{S}\subset\N$ such that
\[
\mathcal{E}_m=O\left(\calN_m^{-1-\delta}\right)\textup{ and }\mathcal{E}_m=O\left(m^{-(\frac{d}{2k}-1)(1+\delta)}\right)\, .
\]
\end{enumerate}
\end{thm}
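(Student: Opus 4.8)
The plan is to follow the now-standard route used for the arithmetic random wave: expand $\calV_m$ into Wiener chaos, show the second chaos carries the bulk of the variance, identify its coefficient via $\Upsilon$ and $\Psi$, and control the lower-order chaos terms and the error coming from the discrepancy between the empirical measure $\hat\sigma_m:=\calN_m^{-1}\sum_{\lambda\in\Lambda_m}\delta_\lambda$ (pushed to $\Sigma_p$ after rescaling) and the limiting measure $\sigma_p$. First I would recall the Kac-Rice/chaos expansion $\calV_m=\sum_{q\geq 0}\calV_m[2q]$, valid by Assumptions \ref{as:ample} and \ref{as:nd_eigvals} (these guarantee the covariance of $(F_m,\nabla F_m)$ is uniformly non-degenerate, so the coarea normal is square-integrable and the Hermite expansion converges in $L^2$). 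The projection onto the zeroth chaos is $\E[\calV_m]=\frac{\sqrt{L_m}}{2\pi}\Upsilon(\Omega_m)$ by Proposition \ref{prop:expectation}. The first and all odd chaoses vanish by symmetry of the field, and the fourth and higher even chaoses contribute $O\big(\E[\calV_m]^2(\text{4-correlation count}/\calN_m^{2})\big)$, which is exactly the mechanism used in Proposition \ref{prop:variance_upper_bound}; the point of that proposition is that the total variance is $O(\E[\calV_m]^2/\calN_m)$, and the finer statement here will be that the $\geq 4$ chaos is $o$ of the second chaos along $\mathfrak{S}$, provided $\mathfrak{S}$ is chosen so that the normalized $4$-correlation count tends to its ``generic'' value — which is precisely the arithmetic input the paper promises.

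Next I would compute $\mathrm{Var}(\calV_m[2])$ explicitly. Writing $r_m(x)=\E[F_m(0)F_m(x)]$ for the covariance, the second chaos is a quadratic functional of the Gaussian vector $\frac{1}{\sqrt{L_m}}\nabla F_m$, and a direct Kac-Rice computation (differentiate the Gaussian expectation, keep the degree-2 Hermite part) gives
\begin{equation}\label{eq:second_chaos_var_sketch}
\mathrm{Var}(\calV_m[2]) = \frac{L_m}{4\pi^2}\,\frac{\pi}{\calN_m}\,\E\big[(\Xi_m(\Lambda)-1)^2\big]\,(1+o(1)),
\end{equation}
where $\Lambda$ is uniform in the rescaled $\Lambda_m$, $\Xi_m(\lambda)=\Upsilon(\Omega_m)^{-1}\langle\lambda,\Psi(\Omega_m)\lambda\rangle$, and $\Omega_m$ is the covariance of $\frac{1}{\sqrt{L_m}}\nabla F_m(0)$ as in Definition \ref{def:omega}. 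Here $\Psi$ arises because the second-chaos projection of the Kac-Rice density, after integrating the Gaussian weight, produces exactly the matrix kernel $\det(I_d+t\Omega)^{-1/2}(I_d+t\Omega)^{-1}$ integrated against $t^{-1/2}\,dt$; the $\frac{\pi}{\calN_m}$ factor comes from summing $|\hat r_m(\lambda)|^2$-type quantities, which for this random wave collapse to $\frac1{\calN_m}$ on the diagonal $\lambda=\pm\mu$ and vanish off-diagonal. Dividing by $\E[\calV_m]^2=\frac{L_m}{4\pi^2}\Upsilon(\Omega_m)^2$ and recalling $L_m=4\pi^2 m^{1/k}$ gives the shape of \eqref{var} but with $\Omega_m$ and $\hat\sigma_m$ in place of $\Omega$ and $\sigma_p$.

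The final and most delicate step is to replace $(\Omega_m,\hat\sigma_m)$ by $(\Omega,\sigma_p)$ and absorb everything else into $\mathcal{E}_m$. This is where the three cases (A), (B), (C) enter and where the arithmetic is doing all the work. One shows $\Omega_m=\int x\otimes x^*\,d\hat\sigma_m(x)\to\Omega$ and $\E[(\Xi_m(\Lambda)-1)^2]\to\int_{\Sigma_p}(\Xi-1)^2\,d\sigma_p$ with an explicit rate governed by a discrepancy $D_m$ between $\hat\sigma_m$ and $\sigma_p$ tested against the relevant smooth functions (quadratics and the rational function $\Xi$); since $\Upsilon,\Psi$ are smooth (hence Lipschitz on a neighborhood of the uniformly-bounded-and-bounded-below range of $\Omega_m$), the error is $O(D_m)$ plus $O(\calN_m^{-1}D_m)$ inside the bracket, i.e. $O(\calN_m^{-1}D_m)$ after dividing by $\E[\calV_m]^2$. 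The sequence $\mathfrak{S}$ is then chosen to be a positive-density set of integers $m$ for which simultaneously (i) $\calN_m\to\infty$ (Assumption \ref{as:ample}), (ii) Assumption \ref{as:nd_eigvals} holds, (iii) the $4$-correlation count is $O(\calN_m^{2+o(1)})$ with the right leading constant so the $\geq 4$ chaos is $O(\calN_m^{-1-\delta}\E[\calV_m]^2)$, and (iv) $D_m$ satisfies the stated power saving. For (A), $d=2$, equidistribution of lattice points on the relevant binary forms fails in general, so the condition on the discriminant lying in $\mathfrak{D}$ is imposed to force $\hat\sigma_m$ to converge with a polynomial rate in $\calN_m$ (giving $\mathcal{E}_m=O(\calN_m^{-2})$); for (B), $d=3$, one invokes equidistribution of lattice points on ellipsoids with an effective rate (of the form $m^{-1/222+\eps}$, compatible with the $1/28$-type exponents in \eqref{eqn:benmaf}) together with the classical $4$-correlation bound for ternary quadratic forms; for (C), the non-degeneracy condition \eqref{eq:non_degenerate_polynomial} is exactly what makes the circle-method/Kloosterman-refinement machinery yield both a power-saving equidistribution rate and the ``diagonal-dominated'' $4$-correlation estimate that the paper advertises as its new contribution. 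The main obstacle, and the reason the three cases have genuinely different hypotheses, is establishing these two arithmetic facts — effective equidistribution of $\Lambda_m$ and near-optimal control of $4$-correlations — with enough uniformity over a positive-density $\mathfrak{S}$; everything analytic (the chaos expansion, the identification of $\Psi$, the Lipschitz perturbation argument) is robust and essentially dictated by the Kac-Rice formula.
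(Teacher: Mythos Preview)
Your overall strategy is essentially the one the paper follows, and the identification of the two arithmetic inputs (equidistribution of $\hat\sigma_m$ toward $\sigma_p$, and control of $4$-correlations) is exactly right. One organizational difference: the paper does \emph{not} derive the variance leading term primarily through the chaos expansion. Instead it proves an ``arithmetic formula'' (Proposition \ref{prop:variance}) directly from the two-point Kac-Rice integral via Berry's expansion of $\E[|w_1||w_2|]$ in the perturbation of the conditional covariance, obtaining
\[
\textup{Var}(\calV_m)=\frac{\pi}{\calN_m}\E[\calV_m]^2\,\frac{1}{\calN_m}\sum_{\lambda\in\Lambda_m}(1-\Xi_m(\lambda))^2+O\Big(\frac{L_m|\calC_{4;m}|}{\calN_m^4}\Big),
\]
and only afterwards observes that this main term equals $\textup{Var}(\calV_m[2])$. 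Your chaos-first route is an equally valid alternative, though controlling $\sum_{q\geq 2}\textup{Var}(\calV_m[2q])$ directly by $|\calC_{4;m}|/\calN_m^4$ takes some work you have not spelled out.

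There are, however, a few concrete inaccuracies. First, your higher-chaos bound is misnormalized: the correct contribution (relative to $\E[\calV_m]^2$) is $O(|\calC_{4;m}|/\calN_m^{4})$, not $O(|\calC_{4;m}|/\calN_m^{2})$; with your normalization the bound would be $O(\calN_m)$ rather than $O(1/\calN_m)$. Second, your item (iii) is wrong in case (C): for non-degenerate $p$ the paper's Theorem \ref{thm:correlations} gives $|\calC_{4;m}|\asymp\calN_m^{3-2k/(d-2k)}$, which is much larger than $\calN_m^{2+o(1)}$ when $d$ is large relative to $k$; what matters is only that $|\calC_{4;m}|/\calN_m^{4}=O(\calN_m^{-1-2k/(d-2k)})$, a power saving over $\calN_m^{-1}$. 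Third, in case (A) you attribute the $O(\calN_m^{-2})$ error to a polynomial equidistribution rate; in fact the equidistribution rate on ellipses is only a power of $\log m$ (Proposition \ref{prop:equidistribution for ellipses}), and the $\calN_m^{-2}$ comes from Zygmund's bound $|\calC_{4;m}|=O(\calN_m^{2})$ for $d=2$ (Proposition \ref{prop:four_correlations_dim_2_and_3}).
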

\begin{rem}
\label{rem:41.2}
In particular, $\calE_m=\calN^{-\frac{2}{d-2}}$ for $2k=2$, $d\geq 41$.
\end{rem}
\begin{rem}
\begin{itemize}
\item In each case, the sequence $\mathfrak{S}$ is either determined explicitely or characterised by certain sufficient conditions in the various results cited or proved in section \ref{s:arithmetic} below. As explained in section \ref{ss:historical_context}, the exact expression of $\mathfrak{S}$ is related to certain congruence obstructions to finding solutions to $p(x)=m$, which depend on $p$ in various ways.
\item The leading term is non-negative and vanishes exactly when $\Xi(\lambda)=1$ for each $\lambda\in\Sigma_p$. This is equivalent to the fact that $\Psi(\Omega)^{-1/2}\Sigma_p$ be the unit sphere\footnote{If it is included in the unit sphere, since it is a closed compact hypersurface, it is equal to the unit sphere.}. This happens in particular when $p(x)=|x|^2$ as remarked in \cite{krkuwi} and \cite{benmaf} in dimensions $2$ and $3$ respectively.
\end{itemize}
\end{rem}

\paragraph{Distribution.}
Another main result of this work is the limiting \textit{distribution} of the nodal volume in this case, save when $\Psi(\Omega)^{-1/2}\Sigma_p$ is a sphere.

\begin{thm}
\label{thm:clt}
Fix $d\in\N$ and $p\in\R[X_1,\dots,X_d]$ elliptic homogeneous of degree $2k$ satisfying Assumption \ref{as:weak}. Assume also that $\Psi(\Omega)^{-1/2}\Sigma_p$ is not a sphere. Let $\mathfrak{S}\subset\N$ be the subset introduced in Theorem \ref{thm:variance}. Then, as $m\to+\infty$ along $\mathfrak{S}$, the random variable
\[
\frac{\mathcal{V}_m-\mathbb{E}[\mathcal{V}_m]}{\sqrt{\text{Var}(\mathcal{V}_m)}}
\]
converges in distribution to a standard normal $\calN(0,1)$.
\end{thm}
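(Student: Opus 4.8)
## Proof proposal for Theorem \ref{thm:clt}

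\textbf{Overall strategy: Wiener chaos decomposition and the fourth-moment theorem.} The plan is to expand $\calV_m$ (centered and normalized) in the Wiener chaos associated to the Gaussian field $F_m$, identify the dominant chaotic component, and apply the Nualart–Peccati fourth-moment theorem together with a Peccati–Tudor-type multivariate criterion. Since $F_m$ is built from a finite family of Gaussians $(\zeta_\lambda)_{\lambda\in\Lambda_m}$, the relevant Gaussian Hilbert space is finite-dimensional, and $\calV_m-\E[\calV_m]$ admits a finite Wiener chaos expansion $\calV_m-\E[\calV_m]=\sum_{q\geq 1}\calV_m[2q]$ (only even chaoses appear, by the symmetry $F_m\leftrightarrow -F_m$). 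The first step is to write $\calV_m=\int_{\T^d}\delta_0(F_m(x))|\nabla F_m(x)|\,dx$ via the Kac–Rice formalism (justified by Assumptions \ref{as:ample}--\ref{as:nd_eigvals} exactly as for the expectation in Proposition \ref{prop:expectation}), and to decompose the integrand into chaoses using the Hermite-type expansion of $t\mapsto\delta_0(t)$ and of the norm $v\mapsto|v|$ against the (non-degenerate, by Assumption \ref{as:nd_eigvals}) Gaussian law of $(F_m(x),\tfrac{1}{\sqrt{L_m}}\nabla F_m(x))$.

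\textbf{Step 1: isolate the dominant chaos.} I expect the first chaos $\calV_m[2]$ to be identically zero (this is the classical "arithmetic Berry cancellation" computation: the projection onto the second chaos of $\calV_m$ is a linear combination of $\int_{\T^d}|F_m(x)|^2\,dx$-type terms and $\int_{\T^d}|\partial_j F_m(x)|^2\,dx$-type terms which, after using that $\sum_\lambda\lambda_i\lambda_j$-combinations are deterministic, collapse). Hence the leading term is the fourth chaos $\calV_m[4]$. Theorem \ref{thm:variance} already tells us that $\mathrm{Var}(\calV_m)\asymp\E[\calV_m]^2/\calN_m$ with an explicitly identified leading constant; combined with the variance upper bound (Proposition \ref{prop:variance_upper_bound}) applied to each chaos separately, the task is to show $\mathrm{Var}(\calV_m[4])/\mathrm{Var}(\calV_m)\to 1$ and that the higher chaoses $\sum_{q\geq 3}\mathrm{Var}(\calV_m[2q])=o(\mathrm{Var}(\calV_m))$. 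The variance of each chaotic component is governed by the covariance function $r_m(x,y)=\E[F_m(x)\overline{F_m(y)}]=\frac{1}{\calN_m}\sum_{\lambda\in\Lambda_m}e(m^{1/2k}\langle\lambda,x-y\rangle)$ and its derivatives; the relevant quantities are the normalized moments $\frac{1}{\calN_m^{2q}}\sum_{\lambda_1+\dots+\lambda_{2q}=0}(\cdots)$, i.e. the $2q$-correlations of $\Lambda_m$. The key inputs are: $4$-correlations are $O(\calN_m^2)$ with the "diagonal" contribution $3\calN_m(\calN_m-1)$ dominating (so the genuinely off-diagonal $4$-correlations are lower order along $\mathfrak{S}$ — this is one of the arithmetic results invoked from section \ref{s:arithmetic}), and the higher correlations are controlled similarly, so that the $2q$-th chaos contributes $O(\calN_m^{-q})\cdot\E[\calV_m]^2$ relative to the total, hence negligible for $q\geq 3$.

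\textbf{Step 2: CLT for the fourth chaos.} Having reduced to $\calV_m[4]/\sqrt{\mathrm{Var}(\calV_m[4])}$, I would apply the fourth-moment theorem: it suffices to show $\E\big[(\calV_m[4]/\sqrt{\mathrm{Var}(\calV_m[4])})^4\big]\to 3$, equivalently that the contraction norms $\|f_m\otimes_r f_m\|\to 0$ for $r=1,2,3$, where $f_m$ is the symmetric kernel of $\calV_m[4]$. Concretely this translates into showing that certain weighted sums over $8$-tuples of lattice points in $\Lambda_m$ with two linear constraints are of smaller order than $(\text{sum over }4+4\text{ with the diagonal pairing})^2$; this is again an arithmetic statement about higher correlations of $\Lambda_m$, and the new correlation estimate advertised in the abstract is presumably what closes this gap. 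The nondegeneracy hypothesis "$\Psi(\Omega)^{-1/2}\Sigma_p$ is not a sphere" enters precisely here: it guarantees (via Theorem \ref{thm:variance}, since the leading variance constant $\pi\int(\Xi-1)^2d\sigma_p$ is then strictly positive) that $\mathrm{Var}(\calV_m[4])\asymp\E[\calV_m]^2/\calN_m$ does not degenerate, so that the normalization in the fourth-moment theorem is legitimate and the contraction estimates, which are $o(\calN_m^{-2})$ in absolute terms, indeed beat $\mathrm{Var}(\calV_m[4])^2\asymp\calN_m^{-2}$.

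\textbf{Main obstacle.} The hard part is Step 1's control of the higher-order chaoses and Step 2's contraction estimates, both of which reduce to bounding correlations $\sum_{\lambda_1+\dots+\lambda_{2q}=0,\ \lambda_i\in\Lambda_m}1$ (and weighted variants with polynomial factors in the $\lambda_i$) and showing the off-diagonal part is $o(\calN_m^{q})$ along $\mathfrak{S}$. For ellipsoids in $d=2,3$ this is delicate lattice-point geometry on a curve/surface (the $d=2$ case needing the restriction to discriminants in $\mathfrak{D}$ to rule out pathological angular clustering, exactly as in \cite{krkuwi,mprw00}), while for the non-degenerate high-dimensional polynomials it relies on circle-method-type bounds. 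A secondary technical point is the uniform (in $m\in\mathfrak{S}$) control of the Hermite expansion's tail, i.e. showing the chaos expansion converges in $L^2$ with bounds uniform in $m$; this uses Assumption \ref{as:nd_eigvals} to keep the Gaussian densities entering the Kac–Rice integrand uniformly nondegenerate, together with the continuity of $\Upsilon,\Psi$ on positive-definite matrices. Everything downstream of the correlation bounds is the by-now-standard Wiener-chaos machinery.
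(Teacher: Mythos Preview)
Your proposal contains a fundamental error in Step 1: the second chaotic projection $\calV_m[2]$ does \emph{not} vanish here. Berry cancellation---the collapse of the second chaos---is precisely the phenomenon that occurs for the sphere $p(x)=|x|^2$ and that the hypothesis ``$\Psi(\Omega)^{-1/2}\Sigma_p$ is not a sphere'' rules out. Proposition~\ref{prop:second_chaos} computes $\calV_m[2]$ explicitly as
\[
\calV_m[2]=\sqrt{\frac{L_m}{2\pi}}\,\Upsilon(\Omega_m)\,\frac{1}{\calN_m}\sum_{\lambda\in\Lambda_m}\bigl(\Xi_m(\lambda)-1\bigr)\bigl(|\zeta_\lambda|^2-1\bigr),
\]
and its variance equals exactly the leading term $\frac{\pi}{\calN_m}\E[\calV_m]^2\cdot\frac{1}{\calN_m}\sum_\lambda(\Xi_m(\lambda)-1)^2$ from Theorem~\ref{thm:variance}. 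The non-sphere hypothesis ensures $\int_{\Sigma_p}(\Xi-1)^2\,d\sigma_p>0$, so this leading term is genuinely of order $\calN_m^{-1}$ and does not collapse. The ``$\int|\partial_jF_m|^2$-type terms'' you mention do \emph{not} become deterministic: $\int_{\T^d}|\partial_jF_m(x)|^2\,dx=\frac{L_m}{\calN_m}\sum_\lambda\lambda_j^2|\zeta_\lambda|^2$ is random, and the coefficients $\Xi_m(\lambda)-1$ are generically nonconstant in $\lambda$.

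Consequently the paper's route is the opposite of yours: it shows that the \emph{second} chaos dominates. Combining Propositions~\ref{prop:variance} and~\ref{prop:second_chaos} with the $4$-correlation bounds gives $\textup{Var}(\calV_m)-\textup{Var}(\calV_m[2])=O(L_m|\calC_{4;m}|/\calN_m^4)=o(\textup{Var}(\calV_m[2]))$, so $\calV_m-\E[\calV_m]$ is asymptotically equivalent in $L^2$ to $\calV_m[2]$. Since $\calV_m[2]$ is a weighted sum $\sum_\lambda c_m(\lambda)(|\zeta_\lambda|^2-1)$ of independent (modulo $\zeta_{-\lambda}=\overline{\zeta_\lambda}$) centred random variables with $\sum_\lambda c_m(\lambda)^2=1$ and $\max_\lambda|c_m(\lambda)|\to 0$, the CLT follows from the classical Lindeberg principle---no fourth-moment theorem, no contraction estimates, and no correlation bounds beyond $|\calC_{4;m}|$ are needed. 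Your Step~2 machinery (fourth-moment theorem for the fourth chaos, $8$-tuple correlation estimates) is therefore aimed at the wrong target and would, even if carried out, yield information about a component that is already negligible.
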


Regarding the condition that $\Psi(\Omega)^{-1/2}\Sigma_p$ is not a sphere, let us note first that whenever the degree of $p$ is $2k>2$, it is automatically satisfied since $\Sigma_p$ is an algebraic hypersurface of degree greater than two and $\Psi(\Omega)^{-1/2}$ is a linear map. In the case where $\Sigma_p$ is an ellipsoid, however, we have not found a general result to rule out cancellation. However, in section \ref{s:anticancellation} below, we study the family of ellipsoids $ax_1^2+x_2^2+x_3^2$ and prove that for $a\in\N$ large enough, $\Psi(\Omega)^{-1/2}\Sigma_p$ is not a sphere (see Proposition \ref{prop:anticancellation}).

\paragraph{Further remarks.}

\begin{cor}\label{cor:symmetric}
In addition to the assumptions as Theorem \ref{thm:variance}, assume that $p$ be symmetric polynomial, such that only even powers of the indeterminates appear among its monomials. Then
\begin{enumerate}[label=(\roman*)]
\item
The expected nodal volume is
\begin{equation}
\label{expectd4}
\E[\calV_m]=\sqrt{\frac{4\pi A_m}{d}}\frac{\Gamma\left(\frac{d+1}{2}\right)}{\Gamma\left(\frac{d}{2}\right)}\times m^{1/2k}
\end{equation}
where $A_m=\frac{1}{\calN_m}\sum_{\lambda\in\Lambda_m}|\lambda|^2$ satisfies, as $m\to+\infty$ along some positive density sequence $\mathfrak{S}\subset\N$,
\[
A_m=(1+o(1))\frac{1}{|S^{d-1}|}\int_{S^{d-1}}\frac{1}{p(\omega)^{1/2k}}d\omega\, .
\]
\item
For each $m\in\N$ such that $\E[\calV_m]>0$, let
\begin{equation}
\label{vard4}
\calE_m:=\frac{\text{Var}(\mathcal{V}_m)}{\E[\calV_m]^2}-\frac{\pi}{\calN_m}\times\frac{1}{|S^{d-1}|}\int_{S^{d-1}}\br{(1/d)p(\omega)^{-\frac{1}{k}}-1}^2d\omega\, .
\end{equation}
Then, in each case $(A)$, $(B)$ and $(C)$ of Theorem \ref{thm:variance}, $\calE_m$ satisfies the estimates given in Theorem \ref{thm:variance}.
\end{enumerate}
Moreover, as $m\to+\infty$ along the same subsequence, $\frac{\calV_m-\E[\calV_m]}{\sqrt{\text{Var}(\mathcal{V}_m)}}$ converges to a standard normal.
\end{cor}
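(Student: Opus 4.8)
The plan is to obtain Corollary~\ref{cor:symmetric} by specialising Proposition~\ref{prop:expectation}, Theorem~\ref{thm:variance} and Theorem~\ref{thm:clt} to the symmetric case, where the extra symmetry of $p$ makes the functionals $\Upsilon$ and $\Psi$ of~\eqref{eq:upsilon_psi_definition} completely explicit. The crucial preliminary step is a \emph{symmetrisation} observation: since $p$ is invariant under all permutations of the coordinates of $\R^d$ and --- because only even powers of the indeterminates occur in $p$ --- under each reflection $x_i\mapsto-x_i$, both the lattice point set $\Lambda_m$ and the limiting measure $d\sigma_p$ of Definition~\ref{def:equidistribution_measure} inherit this whole symmetry group (for $d\sigma_p$ this is clear, the radial map $\omega\mapsto\omega/p(\omega)^{1/2k}$ commuting with these symmetries). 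A direct computation from~\eqref{eq:field_definition} gives $\Omega_m=\frac{1}{\calN_m}\sum_{\lambda\in\Lambda_m}\lambda\otimes\lambda^*$; together with~\eqref{eq:omega_def}, the reflections kill the off-diagonal entries of $\Omega_m$ and $\Omega$ while the transpositions equate the diagonal ones, so that
\[
\Omega_m=\frac{A_m}{d}\,I_d\,,\qquad \Omega=\frac{\bar A}{d}\,I_d\,,\qquad \bar A:=\int_{\Sigma_p}|x|^2\,d\sigma_p=\frac{1}{|S^{d-1}|}\int_{S^{d-1}}\frac{d\omega}{p(\omega)^{1/k}}\,,
\]
where $A_m=\frac{1}{\calN_m}\sum_{\lambda\in\Lambda_m}|\lambda|^2$ is the quantity of the statement.

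Next I would evaluate $\Upsilon$ and $\Psi$ on scalar matrices. Substituting $u=ct$ in~\eqref{eq:upsilon_psi_definition} yields $\Upsilon(cI_d)=\sqrt{c}\,\Upsilon(I_d)$ and $\Psi(cI_d)=c^{-1/2}\Psi(I_d)$, and an integration by parts for $\Upsilon$ together with a Beta-function computation give $\Upsilon(I_d)=2\sqrt{\pi}\,\Gamma\!\big(\tfrac{d+1}{2}\big)/\Gamma\!\big(\tfrac{d}{2}\big)$ and $\Psi(I_d)=\tfrac{2\sqrt{\pi}}{d}\,\Gamma\!\big(\tfrac{d+1}{2}\big)/\Gamma\!\big(\tfrac{d}{2}\big)\,I_d$, so that in particular $\Upsilon(cI_d)^{-1}\Psi(cI_d)=\tfrac{1}{cd}I_d$. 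Feeding $\Omega_m=\tfrac{A_m}{d}I_d$ into Proposition~\ref{prop:expectation} and using $\sqrt{L_m}/(2\pi)=m^{1/2k}$ gives at once $\E[\calV_m]=m^{1/2k}\sqrt{4\pi A_m/d}\,\Gamma\!\big(\tfrac{d+1}{2}\big)/\Gamma\!\big(\tfrac{d}{2}\big)$, which is~\eqref{expectd4}; and the asymptotics $A_m=\bar A\,(1+o(1))$ is simply the convergence of the rescaled counting measure of $\Lambda_m$ towards $\sigma_p$, tested against the function $|x|^2$ --- one of the arithmetic inputs established in Section~\ref{s:arithmetic} and already used to prove Theorem~\ref{thm:variance}, hence valid along the same positive density sequence $\mathfrak{S}$. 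This proves part~(i).

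For part~(ii) I would substitute $\Omega=\tfrac{\bar A}{d}I_d$ into the definition of $\Xi$ from Theorem~\ref{thm:variance}: the identity above gives $\Xi(\lambda)=\langle\lambda,\Upsilon(\Omega)^{-1}\Psi(\Omega)\lambda\rangle=|\lambda|^2/\bar A$, so the leading term of $\textup{Var}(\calV_m)/\E[\calV_m]^2$ reads
\[
\frac{\pi}{\calN_m}\int_{\Sigma_p}\Big(\frac{|x|^2}{\bar A}-1\Big)^2 d\sigma_p=\frac{\pi}{\calN_m}\cdot\frac{1}{|S^{d-1}|}\int_{S^{d-1}}\Big(\frac{p(\omega)^{-1/k}}{\bar A}-1\Big)^2 d\omega\,,
\]
which, after inserting $A_m=\bar A(1+o(1))$, is the quantity subtracted in~\eqref{vard4}; hence the $\calE_m$ of~\eqref{vard4} differs from the $\calE_m$ of Theorem~\ref{thm:variance} only by this rewriting, and inherits its estimates in each of cases (A), (B), (C). Finally, since $\Psi(\Omega)^{-1/2}$ is a scalar multiple of $I_d$, the hypersurface $\Psi(\Omega)^{-1/2}\Sigma_p$ is a homothetic copy of $\Sigma_p$, hence a round sphere if and only if $\Sigma_p$ is --- i.e. if and only if $p$ is proportional to $(x_1^2+\dots+x_d^2)^k$. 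Outside that degenerate case (which is the arithmetic random wave, where the rescaled nodal volume has a non-Gaussian limit), Theorem~\ref{thm:clt} applies directly and yields the convergence of $(\calV_m-\E[\calV_m])/\sqrt{\textup{Var}(\calV_m)}$ to $\calN(0,1)$.

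The argument is thus essentially bookkeeping on top of the main results: the one point deserving genuine care is the Beta-function evaluation leading to $\Upsilon(I_d)^{-1}\Psi(I_d)=\tfrac{1}{d}I_d$ and to the precise constants in~\eqref{expectd4}--\eqref{vard4}. A secondary check is that, in each of cases (A)--(C), the error in $A_m=\bar A(1+o(1))$ is quantitatively small enough not to affect the remainder $\calE_m$ permitted there; this is again supplied by the quantitative equidistribution estimates of Section~\ref{s:arithmetic}.
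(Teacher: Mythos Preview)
Your proof is correct and follows essentially the same route as the paper's: use the reflection/permutation symmetries to force $\Omega_m=(A_m/d)I_d$ and $\Omega=(\bar A/d)I_d$, evaluate $\Upsilon$ and $\Psi$ on scalar matrices via the integration-by-parts identity $\Psi(cI_d)=\tfrac{1}{d}\Upsilon(cI_d)I_d$, and then feed the result into Proposition~\ref{prop:expectation}, Theorem~\ref{thm:variance} and Theorem~\ref{thm:clt}. You are in fact more careful than the paper in two places: you make explicit the non-sphere hypothesis needed to invoke Theorem~\ref{thm:clt} (excluding $p\propto(x_1^2+\dots+x_d^2)^k$), and your computation of $\bar A=\int_{\Sigma_p}|x|^2\,d\sigma_p=\tfrac{1}{|S^{d-1}|}\int_{S^{d-1}}p(\omega)^{-1/k}\,d\omega$ and of the variance integrand $(\,|x|^2/\bar A-1\,)^2$ is the correct one, even though it does not literally match the constants printed in the statement.
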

\begin{proof}
 If $p$ satisfies the assumptions of the corollary, then $\Omega_m$ is diagonal and all of its diagonal coefficients are equal to $(A_m/d)$. The first point follows from Proposition \ref{prop:expectation}. The asymptotic behaviour of $A_m$ is due to the fact that the counting measure on $\Lambda_m$ converges to $\sigma_p$ under the assumptions of Theorem \ref{thm:variance} as explained in its proof. The second point follows from Theorems \ref{thm:variance} and \ref{thm:clt} by computing $\Upsilon((A_m/d)I_d)$ and $\Psi((A_m/d)I_d)$. By integrating by parts, we deduce that $\Psi((A_m/d)I_d)=\frac{1}{d}\Upsilon((A_m/d)I_d)$ and we express $\Upsilon((A_m/d)I_d)$ using \eqref{eq:upsilon_formula} to conclude.
\end{proof}
\begin{ex}
Assume that $p(x)=x_1^{2k}+\dots+x_d^{2k}$ for some $k>0$ such that $d>5\times 2^{2k+1}(2k-1)$. By \eqref{eq:diagonal_forms_nd}, $p$ satisfies Assumption \ref{as:weak}. Moreover, $p$ is symmetric and $\Omega_m=\frac{1}{d}I_d$. By Corollary \ref{cor:symmetric}, we deduce that with this choice of $p$, as $m\to+\infty$ along a positive density sequence of integers,
\begin{align*}
\E[\calV_m]&=\left(\frac{4\pi}{d}\frac{1}{|S^{d-1}|}\int_{S^{d-1}}\frac{1}{p(\omega)^{1/2k}}d\omega\right)^{1/2}\frac{\Gamma\left(\frac{d+1}{2}\right)}{\Gamma\left(\frac{d}{2}\right)}\times m^{1/2k}(1+o(1))\\
\frac{\text{Var}(\mathcal{V}_m)}{\E[\calV_m]^2}&=\frac{\pi}{\calN_m}\times\frac{1}{|S^{d-1}|}\int_{S^{d-1}}\br{(1/d)p(\omega)^{-\frac{1}{k}}-1}^2d\omega(1+o(1))\, .
\end{align*}
Moreover, as $m\to+\infty$ along the same subsequence, $\frac{\calV_m-\E[\calV_m]}{\sqrt{\text{Var}(\mathcal{V}_m)}}$ converges to a standard normal.
\end{ex}

For bivariate polynomials of any positive even degree we have the following.

\begin{prop}
	\label{prop:varcur}
	Let $p\in\R[X_1,X_2]$ be elliptic homogeneous of degree $2k$. Then
	\begin{equation}
	\label{eqn:varcur}
	\frac{\textup{Var}(\mathcal{V}_m)}{\E[\calV_m]^2}=\frac{\pi}{\calN_m^2}\sum_{\lambda\in\Lambda_m}(1-\Xi_m(\lambda))^2+O\left(\frac{1}{\calN_m^2}\right).
	\end{equation}
\end{prop}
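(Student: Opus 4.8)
The plan is to specialize the general variance asymptotics of Theorem \ref{thm:variance} to the bivariate case, keeping track of the fact that in dimension two the "limiting" matrix $\Omega$ (built from $\sigma_p$) must be replaced by the honest finite-$m$ covariance matrix $\Omega_m$ in order to get an error term that is $O(\calN_m^{-2})$ rather than merely governed by an equidistribution rate. Recall from the proof of Proposition \ref{prop:expectation} and the discussion around \eqref{eq:omega_def} that $\Omega_m=\frac{1}{\calN_m}\sum_{\lambda\in\Lambda_m}\lambda\otimes\lambda^*$ (the empirical second-moment matrix of the rescaled lattice points $\Lambda_m\subset\Sigma_p$), and that $\E[\calV_m]=\frac{\sqrt{L_m}}{2\pi}\Upsilon(\Omega_m)$. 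The natural definition of $\Xi_m$ is the finite-$m$ analogue of $\Xi$, namely
\[
\Xi_m(\lambda)=\Upsilon(\Omega_m)^{-1}\langle\lambda,\Psi(\Omega_m)\lambda\rangle\, ,\qquad \lambda\in\Lambda_m\, .
\]
With this convention, I would first establish the \emph{exact} identity (before any arithmetic input)
\begin{equation}\label{eq:varcur_exact}
\frac{\textup{Var}(\calV_m)}{\E[\calV_m]^2}=\frac{\pi}{\calN_m^2}\sum_{\lambda\in\Lambda_m}\br{1-\Xi_m(\lambda)}^2+(\textup{higher chaos / correlation remainder})\, ,
\end{equation}
which is precisely what the chaos expansion in section \ref{s:main_proofs} produces: the fourth-order chaos component of $\calV_m$, computed via the Kac--Rice formula, contributes the $\lambda\otimes\mu\otimes\nu\otimes\iota$ sum with $\lambda+\mu+\nu+\iota=0$, and the "diagonal" part of this sum (pairings $\mu=-\lambda$, $\nu=-\iota$, etc.) yields the main term written in terms of $\Psi(\Omega_m)$ and $\Upsilon(\Omega_m)$ after performing the Gaussian/Kac--Rice integration. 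So the first step is bookkeeping: extract \eqref{eq:varcur_exact} from the machinery already set up, being careful that in $d=2$ the matrix entering the main term is $\Omega_m$, not $\Omega$.

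The second step is to control the remainder in \eqref{eq:varcur_exact}. There are two contributions. (i) The \emph{non-diagonal $4$-correlations}: tuples $(\lambda,\mu,\nu,\iota)\in\Lambda_m^4$ with $\lambda+\mu+\nu+\iota=0$ that are not "degenerate" pairings. By the classical lattice-point correlation estimates for binary forms (the same results invoked for case (A) of Theorem \ref{thm:variance}; in the circle case this is the Bombieri--Pila / Bourgain--Rudnick type bound $r_4^{\mathrm{nondeg}}(m)\ll \calN_m^{2+o(1)}/\calN_m^{?}$, more precisely the non-degenerate $4$-correlations number $\ll_\eps m^{\eps}$ relative to the trivial count, cf. the $\mathfrak D$ discussion in Appendix \ref{s:2D_equidistribution}), these contribute $O(1/\calN_m^2)$ after normalization by $\E[\calV_m]^2\asymp L_m$. (ii) The \emph{higher-order chaos} ($\ge 6$): by the general estimate behind Proposition \ref{prop:variance_upper_bound}, each chaos of order $2q\ge 6$ contributes $\ll \calN_m^{-q+1}\le\calN_m^{-2}$ after normalization, using again only the trivial bound on $2q$-correlations together with Assumption \ref{as:nd_eigvals}; in $d=2$, $\calN_m\to\infty$ is automatic along the relevant subsequence and Assumptions \ref{as:ample}, \ref{as:nd_eigvals} are in force. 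Summing both, the total remainder is $O(1/\calN_m^2)$, giving \eqref{eqn:varcur}.

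The main obstacle is step (ii)-adjacent, namely making the constant in the remainder genuinely $O(\calN_m^{-2})$ rather than $o(\calN_m^{-1})$: this forces one to \emph{not} replace $\Omega_m$ by its limit $\Omega$ inside the main term (which would cost an equidistribution-rate error, a priori much worse than $\calN_m^{-2}$ — indeed in $d=2$ there is genuine fluctuation of the angular distribution, so $\Omega_m\not\to\Omega$ along every subsequence), and instead to carry $\Omega_m$ throughout. Concretely, I expect the delicate point is verifying that every place where the chaos computation of section \ref{s:main_proofs} writes a matrix, it is legitimately $\Omega_m$ and the Kac--Rice expansion is exact at finite $m$ up to the genuinely small $6$th-and-higher chaos and the non-degenerate $4$-correlations — i.e. that no hidden $\calN_m^{-1}$-size error sneaks in from truncating an asymptotic expansion of $\Upsilon$ or $\Psi$. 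Once one commits to the exact finite-$m$ formulation \eqref{eq:varcur_exact}, the only arithmetic input needed is the (well-known, binary-form) bound on non-degenerate $4$-correlations, and the proposition follows. I would also remark that by continuity of $\Upsilon,\Psi$ and, when it holds, convergence $\Omega_m\to\Omega$, formula \eqref{eqn:varcur} recovers the leading term $\frac{\pi}{\calN_m}\int_{\Sigma_p}(\Xi-1)^2\,d\sigma_p$ of Theorem \ref{thm:variance}(A) with an error absorbed into the equidistribution rate; but stated as above it is sharper and requires no equidistribution speed.
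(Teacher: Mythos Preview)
Your overall strategy---keep the finite-$m$ matrix $\Omega_m$ in the main term and bound the remainder by the number of $4$-correlations---is exactly what the paper does, and your observation that one must \emph{not} pass to the limiting matrix $\Omega$ (to avoid an equidistribution error) is the right one. However, the paper's execution is much shorter than your outline suggests, and your proposal contains one incorrect step.

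The paper's proof is two lines: Proposition~\ref{prop:variance} (the arithmetic formula, derived via Kac--Rice and the Berry-type expansion of $K_{2;m}$) already gives
\[
\frac{\textup{Var}(\calV_m)}{\E[\calV_m]^2}=\frac{\pi}{\calN_m^2}\sum_{\lambda\in\Lambda_m}(1-\Xi_m(\lambda))^2+O\!\left(\frac{|\calC_{4;m}|}{\calN_m^4}\right),
\]
with the remainder packaged entirely in terms of $|\calC_{4;m}|$. Then Proposition~\ref{prop:four_correlations_dim_2_and_3} supplies $|\calC_{4;m}|\ll_k \calN_m^2$ for \emph{any} bivariate elliptic homogeneous $p$ of degree $2k$, by the elementary B\'ezout/Zygmund observation that two translated copies of the curve $\{p=m\}$ intersect in at most $(2k)^2$ points. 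No chaos decomposition, no equidistribution, and no Bombieri--Pila or Bourgain--Rudnick input is needed.

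Your detour through the chaos expansion is unnecessary, and step~(ii) as written is wrong: the claim that ``each chaos of order $2q\ge 6$ contributes $\ll \calN_m^{-q+1}$ using only the trivial bound on $2q$-correlations'' is not justified. The trivial bound $|\calC_{2q;m}|\le \calN_m^{2q-1}$ yields only $\int_{\T^2} r_m^{2q}\le \calN_m^{-1}$, hence a contribution of order $\calN_m^{-1}$ (not $\calN_m^{-q+1}$) to the normalized variance from each fixed chaos level. The actual reason the full remainder is $O(\calN_m^{-2})$ is that the Kac--Rice expansion of $K_{2;m}$ in Lemma~\ref{lemma:berry_s_method} produces a remainder whose integral is controlled by $\int r_m^4=|\calC_{4;m}|/\calN_m^4$; equivalently, in chaos language, the entire tail $\sum_{q\ge 2}\textup{Var}(\calV_m[2q])$ equals $\textup{Var}(\calV_m)-\textup{Var}(\calV_m[2])$, which Propositions~\ref{prop:variance} and~\ref{prop:second_chaos} together bound by $O(L_m|\calC_{4;m}|/\calN_m^4)$. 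Either way the only arithmetic input is the $4$-correlation bound in dimension two, and that bound is B\'ezout, not Bombieri--Pila.
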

The behaviour of the first summand on the RHS of \eqref{eqn:varcur} depends on 
the properties of $|\Lambda_m|$ as $m$ gets large. For degree $2k\geq 4$ these are not in general well understood -- see section \ref{sec:countlp}.
\begin{rem}
	Whenever the summation on the right-hand side dominates the error term, using Proposition \ref{prop:second_chaos} and reasoning as in the proof of Theorem \ref{thm:clt}, one can deduce that $\frac{\calV_m-\E[\calV_m]}{\sqrt{\text{Var}(\mathcal{V}_m)}}$ converges to a standard Gaussian.
\end{rem}

\paragraph{Future directions.}
There is a rich literature (e.g., \cite{becawi,cmwsh4,canhan,cantot,daesle,elhtot,totzel}) on geometric functionals of random fields, and on the asymptotic behaviour of high energy eigenfunctions. It would be interesting to study other geometric functionals for the fields \eqref{eq:field_definition}, in two or more dimensions: nodal components, critical points, excursion sets...\\

Moreover, as explained below Theorem \ref{thm:clt}, starting from the Berry cancellation condition we have established, we are unable to provide a tractable criterion to decide whether or not it occurs for general ellipsoids. This matter needs further clarification.\\

Finally, as the present paper shows and others suggest, any progress on the understanding of the arithmetic of $\Lambda_m$ for a given class of polynomials would yield results on $\calV_m$ and other geometric functionals of the field $F_m$.

\subsection{Notation and terminology}

For two positive functions $f(x),g(x)$ of the real variable $x$, the expression
\begin{equation*}
f\sim g \qquad\text{ means that }\qquad \lim_{x\to\infty}\frac{f(x)}{g(x)}=1.
\end{equation*}
The interchangeable notations (respectively Landau's and Vinogradov's)
\begin{equation*}
f=O(g), \quad\quad f\ll g
\end{equation*}
mean that $\exists c,x_0 : \forall x>x_0$ one has $f(x)\leq cg(x)$. In case $f\ll g\ll f$, we write $f\asymp g$. The expression
\begin{equation*}
f=o(g) \qquad\text{ means that }\qquad \lim_{x\to\infty}\frac{f(x)}{g(x)}=0.
\end{equation*}
We may also write an index, e.g. $f=O_a(g)$, to stress dependence on the quantity $a$.\\

Let $A'\subseteq A\subseteq\mathbb{Z}$. We say $A'$ has \textit{asymptotic density} $l$, $0\leq l\leq 1$ in $A$ if
	\begin{equation}
	\label{density}
	l=\lim_{X\to\infty}\frac{|\{n\in A' : n\leq X\}|}{|\{n\in A : n\leq X\}|}.
	\end{equation}

\subsection{Proof outline and plan of the paper}
Let us outline the proofs here. The method is of similar flavour to the case where $p$ is the equation of a sphere \cite{krkuwi,benmaf}. A mean centred stationary Gaussian field such as $F_m$ on the torus $\T^d$ may be completely described by its \textit{covariance function}
\begin{equation}
\label{rintro}
r_m(x-y):=\mathbb{E}[F_m(x)F_m(y)]\, .
\end{equation}
From \eqref{eq:field_definition} we get, for each $x\in\T^d$,
\begin{equation}
\label{r}
r_m(x)=\frac{1}{\calN_m}\sum_{\lambda\in\Lambda_m}e(m^{1/2k}\lambda\cdot x)\, .
\end{equation}

To understand the functional $\calV_m$, we will use the \textit{Kac-Rice formulas} (see Theorem 6.9 of \cite{azawsc}). Given a random field satisfying certain conditions, these formulas constitute a standard tool to compute moments of the measure of the zero set (see for instance Chapter 6 of \cite{azawsc}).\\

Their application requires understanding the (scaled) \textit{second intensity} $K_{2;m}$ of $F_m$ (to be defined in \eqref{K2tdef} and \eqref{eq:K2_def}). Section \ref{s:covariance_proofs} is thus dedicated to the following arguments and computations. We will express $K_{2;m}$ in terms of the conditional Gaussian expectation of the $2d$-dimensional vector
\[
(\nabla F_m(0), \nabla F_m(x))
\]
conditioned on $F_m(0)=F_m(x)=0$. Following a method introduced by Berry in \cite{berry2}, we will then rewrite this conditional expectation explicitly in terms of the covariance function \eqref{r} and its various first and second order derivatives at $x$ (see \eqref{eq:K2_def}). Next, we approximate these functions by their Taylor expansion around zero (see Lemma \ref{lemma:berry_s_method} for the general expansion applied here). This expansion is not valid as the function $r_m$ and its derivatives do not decay uniformly away from the diagonal. To deal with this issue, we will define a \textit{singular} set $S_m\subset\mathbb{T}^d$ (see Definition \ref{def:singular_set}), outside of which the expansion is valid and show that its volume is small enough that the integral of $K_{2;m}$ over $S_m$ is negligible in the overall computation.\\

Having integrated the approximation formula, the error terms are expressed in terms of the two following arithmetic quantities (see the \textit{arithmetic formula} Proposition \ref{prop:variance}). First, $\calN_m$ the cardinality of $\Lambda_m$ and then, $|\calC_{4;m}|$ the number of quadruples $(\lambda,\mu,\nu,\iota)\in\Lambda_m^4$ such that $\lambda+\mu+\nu+\iota=0$, which are called $4$-correlations. On the other hand, the main term is expressed as the sum for $\lambda\in\Lambda_m$, of the values of an explicit function of $\lambda$. Assuming that the counting measure on $\Lambda_m$ converges to a limiting measure $d\sigma_p$, the expression simplifies considerably and ensures that the leading term is indeed greater than the error terms.\\

Hence, all that remains is to estimate $\calN_m$, $|\calC_{4;m}|$ and the rate of equidistribution. This is very difficult to do in full generality. However, we are able to do so in three cases. The arguments are collected in section \ref{s:arithmetic}. First, based on an equidistribution result by Dias in \cite{dias00} as well as some estimates by Cilleruelo and Córdoba from \cite{cilcor}, we are able to cover the case where $\Sigma_p$ belongs to a certain class of ellipses. Next, applying various results collected in section 11.6 of \cite{iwanbk} we cover the case where $\Sigma_p$ is an ellipsoid in dimension three. Finally, by applying Birch's results from \cite{birc62}, we deal with a wide family of polynomials $p$ with a high number of variables with respect to its degree which we call non-degenerate (as defined in \eqref{eq:non_degenerate_polynomial}).\\

In comparison to the cases covered in the present text, because of Berry cancellation, the case where $\Sigma_p$ is either a circle or a sphere in $\R^3$ involves the analysis of $6$-correlations (see \cite{krkuwi} and \cite{benmaf} for further details).\\

In section \ref{s:chaos}, the nodal volume distribution is derived by considering the \textit{Wiener chaos expansion} of $\calV_m$. More precisely, we consider random variables formed by taking polynomials in the random variables $(F_m(x))_{x\in\T^d}$. We then define, for each $q\in\N$, $C_q$ to be the space of such polynomials of degree $q$ which are $L^2$-orthogonal to the polynomials of lower degree. The sum of the spaces $C_q$ is dense in $L^2$ so we can decompose $\calV_m$ as
\begin{equation}
\label{wiener}
\calV_m=\sum_{q=0}^{\infty}\calV_m[q]\, .
\end{equation}
The series converges in $L^2(\mathbb{P})$, where $\calV_m[q]$ is the orthogonal projection of $\calV_m$ onto $C_q$. In section \ref{s:chaos}, we will obtain an upper bound on $\textup{Var}(\calV_m)-\textup{Var}(\calV_m[2])$ in terms of the same arithmetic quantities presented above (see Proposition \ref{prop:second_chaos}). It then follows from the same arithmetic results as before that $\calV_m$ is close to $\calV_m[2]$ in $L^2$. It is then straightforward to deduce a central limit theorem for $\calV_m$. This constitutes a marked difference with the case of spheres \cite{mprw00,cammar}, where the limiting distribution is non-Gaussian. In \cite{mprw00,cammar}, Berry cancellation is tantamount to the second order projection vanishing: in these works, the fourth chaotic component dominates in the expansion \eqref{wiener}.

\subsection{Acknowledgements}
R.M. was supported by Swiss National Science Foundation project 200021\_184927.

\section{Establishing the variance asymptotic: arithmetic results}
\label{s:arithmetic}
In order to study the nodal volume variance asymptotic, we need to understand subtle arithmetic properties of the (projected) lattice point set $\Lambda_m$. The goal of section \ref{s:arithmetic} is to collect prior results and prove new ones, in order to obtain the information needed for the proof of our main results. Our method requires lower bounds on the lattice point number $\calN_m$, upper bounds on the number of four correlations $|\calC_{4;m}|$, i.e., quadruples of eigenvalues whose sum vanishes, and upper bounds on the rate of convergence of the counting measure on $\Lambda_m$ towards the equidistribution measure $d\sigma_p$ (see Definition \ref{def:equidistribution_measure}).\\

This information needs to be obtained in the three following cases. Either $\Sigma_p$ belongs to a certain family of ellipses, or $\Sigma_p$ is an ellipsoid in dimension three, or $p$ is non-degenerate (see \eqref{eq:non_degenerate_polynomial}).\\

In section \ref{ss:historical_context}, we give an overview of previous arithmetic results in the same flavour as those used here. The results used in the rest of the paper are collected in sections \ref{ss:arithmetic_2D_3D} and \ref{ss:arithmetic_hd}.

\subsection{Historical context}\label{ss:historical_context}

In this section, we present some previous results estimating the asymptotic behaviour of $\calN_m$ and $|\calC_{4;m}|$. Let us emphasise that the purpose of the present section is merely to contextualise the following ones. The rest of the manuscript does not refer to results presented in this section.

\subsubsection{Counting lattice points}
\label{sec:countlp}

In general, for simple reasons of congruence obstructions, there are arithmetic sequences of $m$ along which $\calN_m=0$. For instance, if $p(x)=x_1^2+x_2^2$, the values of $p$ are exactly integers with no prime factors equal to $3$ modulo $4$ appearing with an odd exponent. It is natural to restrict to $\mathfrak{S}_p'$ the sequence of $m$ for which $\calN_m>0$.
In the case where $\Sigma_p$ is an ellipse, for each $\eps>0$, on a sequence of $m$ of density one in $\mathfrak{S}_p'$, the number of lattice points satisfies (see \cite{cicoel})
\begin{equation}
\label{lpelli2}
\calN_m\ll_\eps m^\eps\, .
\end{equation}
Considering again $p(x)=x_1^2+x_2^2$, there remain pathological subsequences of $\mathfrak{S}'_p$, e.g. $\calN_m=8$ for each $m$ prime congruent to $1 \pmod 4$. However, for a subsequence $\mathfrak{S}_p$ of density $1$ \textit{within} $\mathfrak{S}_p'$, the number $\calN_m$ is not bounded above by any power of $\log(m)$, and in particular $\calN_m\to\infty$ along $\mathfrak{S}_p$. This is the \textbf{sequence of regular values} for the corresponding polynomial $p(x)=x_1^2+x_2^2$.\\

Similarly \cite{ruwiye,maff3d}, there are no integer solutions to $x_1^2+x_2^2+x_3^2=m=4^n(8l+7)$, where $n,l\in\N$ are given. Excluding these values of $m$, one is left with a subsequence of density $5/6$ in the naturals. To get a lower bound on $\calN_m$, we take the sequence $\mathfrak{S}_p$ satisfying $m\not\equiv 0,4,7 \pmod 8$, of density $5/8$, for which $m^{1/2-\eps}\ll\calN_m\ll m^{1/2+\eps}$. This is the sequence of regular values for $x_1^2+x_2^2+x_3^2$. It is well-known that every positive integer is the sum of $4$ squares. One needs only to insist that $m$ is not a power of $2$ to obtain the lower bound in $m^{1-\eps}\ll\calN_m\ll m^{1+\eps}$ \cite{iwniec,sarn90}. It is natural to take into account these congruence obstructions, and work along sequences of regular values.\\

For any positive $\eps$ in the case of ellipsoids in dimension $d\geq 3$, along regular values in the cases $d=3,4$, one has for all $\eps>0$ \cite{iwniec,fome86}
\begin{equation}
\label{lpelli}
m^{\frac{d-2}{2}-\eps}\ll \calN_m\ll m^{\frac{d-2}{2}+\eps}
\end{equation}
where we may take $\eps=0$ for $d\geq 5$. We remark in particular that there are no congruence obstructions for ellipsoids in $d\geq 5$: here $\mathfrak{S}_p=\N$.\\

Much less is known in general for $p$ of degree $K>2$ (in the present section we do not necessarily assume $K$ to be even). Even in the special case
\begin{equation*}
x^{K}+y^{K}+z^{K}=m, \qquad K\geq 3,
\end{equation*}
for a very long time there existed only upper bounds of the form \cite{hbanna,hbsurv}
\begin{equation}
\label{ternary}
\calN_m\ll_{K,\eps} m^{1/K+\eps}
\end{equation}
until Heath-Brown found \cite[Theorem 13]{hbanna}
\begin{equation*}
\calN_m\ll_\eps m^{\theta+\eps}, \qquad \theta=\frac{2}{\sqrt{K}}+\frac{2}{K-1},
\end{equation*}
improving \eqref{ternary} for $K\geq 8$. In the other direction there is only a $\Omega$-result due to Mahler \cite{mahler}: for  $K=3$, $\calN_m=\Omega(m^{1/12})$. Mahler's method cannot work for any other degree except possibly $K=5$ \cite{hbsurv}. In fact, it is expected that $\calN_m\ll_{K,\eps} m^{\eps}$ as soon as $K\geq 4$ \cite{hbanna,hbsurv}.

There is a general heuristic argument (cfr. \cite[\S 1.3]{hbsurv}, \cite[\S 1]{durusa}) that the lattice points should be approximately
\begin{equation}
\label{heu}
m^{(d-K)/K}
\end{equation}
as soon as $d>K$. The argument is based on the fact that $p$ takes values in $[-Hm,Hm]$ for some large positive $H$, and that there are approximately $m^{d/K}$ many vectors $x$ such that for every coordinate $x_j$ one has the bound $|x_j|\ll m^{1/K}$.\\

When $d=2$ and $p(x)-m$ is absolutely irreducible, Bombieri and Pila \cite{bopi89} showed that
\begin{equation*}
\calN_m\ll_\eps m^{1/K^2+\eps}
\end{equation*}
remarkably independent of $p$. More generally, in the irreducible case Pila \cite{pila95} found the bound
\begin{equation*}
\calN_m\ll_\eps m^{(d-2+1/K)/K+\eps}
\end{equation*}
also independent of $p$. There has been the recent improvement \cite[Theorem 4]{ccdn19}
\begin{equation*}
\calN_m\ll_\eps m^{(d-2)/K},
\qquad
d\geq 3, \ K\geq 5
\end{equation*}
assuming that the homogeneous part of highest degree of $p$ is irreducible.\\

These bounds are quite far from the heuristics \eqref{heu}. To our best knowledge, the only general method to obtain an asymptotic for this problem (or any lower bound, for that matter) is the Hardy-Littlewood circle method. It requires \textit{the number of variables to be much larger than the polynomial degree}. Birch established, via the circle method, an asymptotic formula for solutions to integer systems in many variables (see Theorem \ref{prop:birch_generic_systems} to follow). In particular, if $p$ is an elliptic polynomial of degree $2k$ in $d$ variables, satisfying
\[
d>2^{2k}(2k-1)\,
\]
then, uniformly for each $m\in\N$, either $\calN_m=0$ or
\begin{equation}\label{sharp}
\calN_m\asymp m^{\frac{d}{2k}-1}
\end{equation}
and \eqref{sharp} holds along an arithmetic progression $\mathfrak{S}_p$
(see Proposition \ref{prop:lattice_points_hd} below).
The case of ellipsoids \eqref{lpelli} is a special case of \eqref{sharp} when the dimension is large enough.\\

As a straightforward consequence, of independent interest, we have the following new sharp bounds for lattice points lying on hyperplanes.
\begin{cor}
\label{ori}
Let $p$ be an elliptic polynomial of degree $2k$ in $d$ variables, satisfying $d-1>2^{2k}(2k-1)$. Define $\Sigma=\Sigma_p=\{p(x)=1\}$. Consider the intersection $\Sigma'$ between $\Sigma$ and any hyperplane containing the origin. Then there is an arithmetic progression such that $\Sigma'$ contains an order of
\[
|\Lambda_m\cap\Sigma'|\asymp\calN_m^{1-\frac{1}{d-2k}}
\]
lattice points as $m\to\infty$ along this progression.
\end{cor}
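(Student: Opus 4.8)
The plan is to reduce the count $|\Lambda_m\cap\Sigma'|$ to a lattice-point count for an auxiliary elliptic polynomial in $d-1$ variables and then invoke the sharp bound \eqref{sharp}. Let $H\subset\R^d$ be a hyperplane through the origin. Since $H$ is a rational subspace in the cases that will contribute lattice points (if $H$ is not rational, then $\Lambda_m\cap\Sigma'$ is eventually empty, so we may assume $H$ is defined over $\Q$), choose a basis of $H\cap\Z^d$ and let $\iota\colon\R^{d-1}\to H\subset\R^d$ be the associated linear map with $\iota(\Z^{d-1})=H\cap\Z^d$. Define $q=p\circ\iota\in\R[Y_1,\dots,Y_{d-1}]$. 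Then $q$ is homogeneous of degree $2k$, and it is elliptic: for $y\neq 0$ we have $\iota(y)\neq 0$ (as $\iota$ is injective), so $q(y)=p(\iota(y))>0$ by ellipticity of $p$. Moreover, up to the fixed rescaling by $m^{1/2k}$ built into the definition of $\Lambda_m$, the points of $\Lambda_m$ lying on $H$ correspond exactly to the solutions $y\in\Z^{d-1}$ of $q(y)=m$; hence $|\Lambda_m\cap\Sigma'|$ equals the lattice-point count $\calN_m(q)$ for the $(d-1)$-variate polynomial $q$.

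Next I would apply the Birch-type bound \eqref{sharp} (i.e., Proposition \ref{prop:lattice_points_hd}) to $q$. The hypothesis needed there is that the number of variables exceeds $2^{2k}(2k-1)$; here $q$ has $d-1$ variables and we assumed precisely $d-1>2^{2k}(2k-1)$. Therefore there is an arithmetic progression $\mathfrak{S}$ along which either $\calN_m(q)=0$ or
\[
\calN_m(q)\asymp m^{\frac{d-1}{2k}-1}\, ,
\]
and one can choose $\mathfrak{S}$ so that the local densities (singular series and singular integral) attached to $q$ are bounded below, ruling out the zero case; this is exactly how $\mathfrak{S}_p$ is produced in Proposition \ref{prop:lattice_points_hd}. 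Finally, still along $\mathfrak{S}$ (intersected with the progression of regular values for $p$ itself), the ambient count satisfies $\calN_m\asymp m^{\frac{d}{2k}-1}$ by \eqref{sharp} applied to $p$ (whose number of variables $d>d-1>2^{2k}(2k-1)$ also satisfies the hypothesis). Dividing exponents,
\[
m^{\frac{d-1}{2k}-1}=\big(m^{\frac{d}{2k}-1}\big)^{\frac{d-1-2k}{d-2k}}=\calN_m^{\frac{d-1-2k}{d-2k}}=\calN_m^{1-\frac{1}{d-2k}}\, ,
\]
which is the asserted order of magnitude.

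The one genuine subtlety — the "main obstacle" — is the compatibility of the two arithmetic progressions: one must ensure that a single positive-density (indeed arithmetic-progression) sequence of $m$ simultaneously realizes the sharp lower bound for $q$ on $H$ \emph{and} the sharp two-sided bound for $p$ on $\R^d$. This is handled by noting that the progressions furnished by Proposition \ref{prop:lattice_points_hd} are defined through finitely many congruence conditions (nonvanishing of $p$-adic and real local densities), so the intersection of the two is again a union of arithmetic progressions, nonempty by the Chinese Remainder Theorem once one checks the local solubility conditions are mutually consistent — which they are, since a solution of $q(y)=m$ in $\Z^{d-1}$ pushes forward under $\iota$ to a solution of $p(x)=m$ in $\Z^d$, so local solubility of the $q$-equation forces local solubility of the $p$-equation. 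A secondary, purely bookkeeping point is the rational/irrational dichotomy for $H$ mentioned above and the fact that the implied constants in $\asymp$ depend on $p$ and on the choice of $H$ (equivalently, on $q$), which is consistent with the statement. No new estimates are needed beyond \eqref{sharp}; the corollary is essentially the observation that a central hyperplane section of an elliptic hypersurface is itself (the affine cone over) an elliptic hypersurface in one fewer dimension.
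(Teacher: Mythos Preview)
Your argument is correct and follows essentially the same route as the paper: restrict $p$ to the hyperplane to obtain a $(d-1)$-variate elliptic form $q$, apply the Birch bound \eqref{sharp} to both $q$ and $p$, and compare exponents. The paper does this by ``solving for $x_d$'' from $\langle v,x\rangle=0$ and substituting, whereas you parametrise $H\cap\Z^d$ via a lattice basis; the latter is cleaner and avoids the implicit assumption that one can solve for a coordinate over $\Z$. You are also more careful on two points the paper treats loosely: (i) you note that the statement only makes sense for \emph{rational} hyperplanes (for irrational $H$ the intersection with $\Z^d$ is at most a line, so the asserted asymptotic cannot hold), and (ii) you justify why the two arithmetic progressions $\mathfrak{S}_p$ and $\mathfrak{S}_q$ intersect, by observing that local solubility of $q=m$ forces local solubility of $p=m$ via $\iota$, so in fact $\mathfrak{S}_q\subset\mathfrak{S}_p$ and no Chinese Remainder argument is even needed. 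These are genuine clarifications rather than a different method.
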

\begin{proof}
The equation defining the hyperplane through the origin is $\langle v,x\rangle=0$ for some $v\in\R^d$. Solving for $x_d$ and substituting into $p$ yields
\begin{equation*}
q(x)=m
\end{equation*}
where $q$ is homogeneous of degree $2k$ in $d-1>2^{2k}(2k-1)$ variables. By \eqref{sharp}, along an arithmetic progression $\mathfrak{S}_q$,
\begin{equation*}
|\Lambda_m\cap\Sigma'|\asymp m^\frac{(d-1)-2k}{2k}.
\end{equation*}
Since $\mathfrak{S}_p$ and $\mathfrak{S}_q$ both contain $m$, their intersection is another arithmetic progression. Along this sequence,
\begin{equation*}
|\Lambda_m\cap\Sigma'|\asymp m^\frac{(d-1)-2k}{2k}\asymp\calN_m^\frac{d-1-2k}{d-2k}
\end{equation*}
via another application of \eqref{sharp}.
\end{proof}


\subsubsection{Correlations}
\label{sec:corr}
Similarly to \cite{krkuwi, benmaf}, the following problem arises naturally (from Proposition \ref{prop:variance}).
\begin{que}
Given $p$, how big is the set $\calC_{4;m}$ as $m\to\infty$?
\end{que}
For every $p,m$ one trivially has
\begin{equation*}
\calN_m^2\ll|\calC_{4;m}|\leq \calN_m^3
\end{equation*}
where the lower bound is due to the ``diagonal'' $4$-correlations $(\lambda,-\lambda,\mu,-\mu)$. We immediately derive the generic variance upper bound \eqref{genbd}. Once we have proven Proposition \ref{prop:variance}, if we wish to establish an asymptotic for the nodal volume variance, we need (at least) the improvement $|\calC_{4;m}|=o(\calN^3)$.

More generally, define the \textit{length $\ell$ correlations}, $\ell\geq 3$,
\begin{equation}
\label{lcorr}
\calC_{\ell;m}:=\left\{(\lambda_1,\dots,\lambda_{\ell})\in\Lambda^{\ell} :
\sum_{i=1}^{\ell}\lambda_i=0
\right\}.
\end{equation}
For $\ell$ even we easily find
\begin{equation*}
\calN_m^{\ell/2}\ll|\calC_{\ell;m}|\leq \calN_m^{\ell-1},
\end{equation*}
again by taking into account the correlations that cancel out in pairs.

Let us give a brief account on the existing literature. For lattice points on circles, the $4$-correlations are well understood due to an elementary observation of Zygmund \cite{zyg}: fixing $\lambda_1,\lambda_2$, the two circles centred at the origin and at $\lambda_1+\lambda_2$, both of radius $m^{1/2}$, intersect in at most two points, resulting in a finite number of choices for the remaining $\lambda_3,\lambda_4$. Therefore, for circles $|\calC_{4;m}|\asymp\calN_m^2$. The case of spheres, though, requires understanding the $6$-correlations as well, due to the higher order terms cancelling out (Berry cancellation phenomenon). The fact that $|\calC_{6;m}|=O(\calN_m^4)$ is straightforward again by Zygmund's observation. In \cite[Theorem 2.2]{krkuwi}, the non-trivial
\[ |\calC_{6;m}|=o(\calN_m^4) \]
was shown. Subsequently, Bombieri-Bourgain proved in \cite{bombou}, among other results of this flavour, that actually
\[ |\calC_{6;m}|=O(\calN_m^{7/2}). \]

Similarly, \cite[Theorems 1.6-1.7]{benmaf} shows that, for spheres $p=x_1^2+x_2^2+x_3^2$, the non-trivial bounds
\[ |\calX_{4;m}|=O(\calN_m^{7/4+\eps}), \qquad\qquad |\calC_{6;m}|=O(\calN_m^{11/3+\eps}) \]
hold, where $\calX_{4;m}\subseteq\calC_{4;m}$ are the correlations that do not cancel in pairs. These bounds are needed again because of Berry cancellation.\\

For spheres in dimension $d\geq 5$, the work of Bourgain and Demeter \cite{bode15} implies the upper bounds
\begin{equation}
\label{corrsphu}
|\calC_{\ell;m}|\ll\calN_m^{\ell-1-\frac{2}{d-2}}, \qquad d\geq 4, \  \ell\geq\frac{2(d-1)}{d-3}, \ \ell \text{ even}.
\end{equation}
For $4$-correlations, the bound holds as soon as $d\geq 5$. On the other hand, it is implicit in \cite[section 3.3]{benmaf} that for spheres in $d\geq 5$,
\begin{equation}
\label{corrsphl}
|\calC_{\ell;m}|\gg\calN_m^{\ell-1-\frac{2}{d-2}}, \qquad \ell \text{ even}.
\end{equation}

\subsection{The case of ellipsoids in dimensions two and three}\label{ss:arithmetic_2D_3D}
\label{sec:el23}

In the present subsection, we state the results we use in the case where $2k=2$ and $d\in\{2,3\}$.\\

\paragraph{Lattice point count and equidistribution for ellipses.}

Assume that $p$ is a bivariate quadratic form with integer coefficients. Then, we will use the fact that for each $m\in\N$, $m\geq 1$, the number $\calN_m$ of points $(x,y)\in\Z^2$ such that $p(x,y)=m$ satisfies the following estimates.\\

First, $\calN_m$ is bounded from above, uniformly in $p$. This lemma easily follows from a discussion in section II.A of \cite{cilcor} as explained in Appendix \ref{s:upper_bd_proof}.

\begin{lemma}\label{lemma:upper_bd_ellipses}
Assume that $p$ is a bivariate quadratic form with integer coefficients. For each $\eps>0$ there exists $C=C(\eps)<+\infty$ such that the following holds. Let $p$ be a bivariate quadratic form with integer coefficients. Then, for each $m\in\N$, $m\geq 1$,
\[
\calN_m\leq Cm^\eps\, .
\]
\end{lemma}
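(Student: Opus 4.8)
The plan is to reduce the count to the number of representations of a related integer by a diagonal binary form, and then bound that uniformly by a divisor estimate in an imaginary quadratic field. Since $p$ is elliptic it is positive definite (in particular nondegenerate), and since the number of integer solutions of $p(x,y)=m$ is unchanged if we replace $p$ by an $\mathrm{SL}_2(\Z)$‑equivalent form, I would first assume $p(x,y)=ax^2+bxy+cy^2$ is reduced, i.e. $|b|\le a\le c$ with $a>0$ and $d:=b^2-4ac<0$. Recall that for a reduced positive definite form $a$ equals the minimum of $p$ over $\Z^2\setminus\{0\}$; since $m=p(x_0,y_0)$ for some nonzero $(x_0,y_0)$ (here $m\ge 1$), this gives the key inequality $a\le m$.

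Next I would complete the square: $4a\,p(x,y)=(2ax+by)^2+|d|\,y^2$. Hence the map $(x,y)\mapsto(u,v):=(2ax+by,\,y)$ injects the solution set of $p(x,y)=m$ into the solution set of $u^2+|d|v^2=N$ with $N:=4am$, and by the previous inequality $N\le 4m^2$. It therefore suffices to prove $R(N):=\#\{(u,v)\in\Z^2:u^2+|d|v^2=N\}\ll_\eps N^\eps$ with an implied constant independent of $d$. For this, work in $K:=\Q(\sqrt{-|d|})$, an imaginary quadratic field (recall $d<0$ so $|d|\ge 1$), with ring of integers $\mathcal{O}_K\supseteq\Z[\sqrt{-|d|}]$. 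Each solution of $u^2+|d|v^2=N$ yields $\alpha:=u+v\sqrt{-|d|}\in\mathcal{O}_K$ with $N_{K/\Q}(\alpha)=N$, hence a nonzero principal ideal $(\alpha)\subseteq\mathcal{O}_K$ of norm $N$; conversely $\alpha$ is determined by $(\alpha)$ up to a unit of $\mathcal{O}_K$, of which there are at most $6$, and $(u,v)$ is recovered from $\alpha$. Thus $R(N)\le 6\,A_K(N)$, where $A_K(N)$ is the number of ideals of $\mathcal{O}_K$ of norm $N$. Since $A_K$ is multiplicative and, for a rational prime $\ell$ with $\ell^j\,\|\,N$, the ideals of norm $\ell^j$ are among $\mathfrak{p}^i\bar{\mathfrak{p}}^{\,j-i}$ ($\ell$ split), $\mathfrak{p}^{j/2}$ ($\ell$ inert), $\mathfrak{p}^{j}$ ($\ell$ ramified), one gets $A_K(N)\le\tau(N)$ uniformly in $K$. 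Combining everything, $\calN_m\le R(4am)\le 6\,\tau(4am)\ll_\eps (4m^2)^\eps$, and applying this with $\eps/2$ in place of $\eps$ yields the statement with a constant depending only on $\eps$.

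The only real content is keeping everything uniform in $p$, and this is where I expect the mild obstacle to lie, on two counts: first, an arbitrary form equivalent to $p$ may have an enormous leading coefficient, so it is essential to reduce $p$ first and use $a\le m$ to keep the auxiliary integer $N=4am$ polynomially bounded in $m$; second, the divisor bound $A_K(N)\le\tau(N)\ll_\eps N^\eps$ must hold with an absolute implied constant, independently of the field $K=\Q(\sqrt{-|d|})$, which it does because at each prime power the number of ideals of that norm is bounded by the corresponding divisor of $\tau$. (Alternatively, the lemma can be extracted from the discussion of representation numbers of binary quadratic forms in §II.A of \cite{cilcor}, which is the route taken in Appendix \ref{s:upper_bd_proof}.)
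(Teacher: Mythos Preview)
Your proof is correct and follows essentially the same route as the paper: both arguments bound $\calN_m$ by a constant times a divisor function and then invoke the classical estimate $\tau(N)\ll_\eps N^\eps$. The only difference is that the paper quotes the inequality $\calN_m\le 6\tau(m)$ directly from Cilleruelo--C\'ordoba (Proposition~\ref{prop:upper_bd_ellipses}) and then reproves the divisor bound, whereas you supply a self-contained derivation of the analogous inequality $\calN_m\le 6\tau(4am)\le 6\tau(4m^2)$ via reduction theory and ideal counting in the associated imaginary quadratic field; the detour through $N=4am$ costs nothing since $a\le m$ keeps $N$ polynomial in $m$.
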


On the other hand, along a density one sequence, we use that $\calN_m\to+\infty$. More precisely, we use the following result from \cite{dias00}.
\begin{lemma}[\cite{dias00}, Lemmas 2 and 3]\label{lemma:lattice_points_2D}
Assume that $p$ is a bivariate quadratic form with integer coefficients. Then, for each $\eps>0$ there exists $\mathfrak{S}\subset\N$ a sequence of density one among the integers $m\in\N$ for which $\calN_m>0$ such that for each $m\in\mathfrak{S}$
\[
\calN_m\geq\left(\frac{1}{2}-\eps\right)\log\log(m)\, .
\]
\end{lemma}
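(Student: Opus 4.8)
The claim is a lower bound on the representation number $\calN_m=r_p(m)$ of the binary quadratic form $p$, valid on a density-one subsequence of the integers it represents. Since the target bound is only of size $\log\log m$, the plan is to combine a crude exponential-in-$\omega$ lower bound for $\calN_m$ with the fact that a typical represented integer has many prime factors of a suitable arithmetic type, and then to lose a lot.

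\textbf{Step 1: reduction and ideals.} First I would reduce to $p$ primitive: if $p$ has content $g$, then $\calN_m(p)=\calN_{m/g}(p/g)$ when $g\mid m$ and is $0$ otherwise, so the statement transfers verbatim. Then pass to the language of ideals in the (imaginary quadratic) order of discriminant $D<0$ attached to $p$: a representation of $m$ by $p$ corresponds, up to the bounded number $w$ of automorphs, to an integral ideal of norm $m$ lying in the ideal class of $p$. Let $\calP$ be the set of unramified rational primes that split into principal prime ideals; by Chebotarev for the (ring) class field, $\calP$ has positive density $1/(2h(D))$ among the primes. Given one representation of $m$, factor the corresponding ideal as $\mathfrak a=\mathfrak b\mathfrak c$, where $\mathfrak b$ collects the prime ideals lying above primes of $\calP$. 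Then $\mathfrak b$ is principal, and replacing it by any of the $\geq 2^{\omega_\calP(m)}$ principal ideals of the same norm (a binary choice $\mathfrak l$ versus $\overline{\mathfrak l}$ for each prime of $\calP$ dividing $m$) leaves the class unchanged and yields that many distinct ideals of norm $m$ in the class of $p$. Hence
\[
\calN_m\;\gg_p\;2^{\,\omega_\calP(m)},
\]
where $\omega_\calP(m)$ counts distinct prime factors of $m$ in $\calP$ (ramified primes and higher prime powers only help).

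\textbf{Step 2: most represented integers are $\calP$-rich.} It then suffices to show that, for density one among $m$ with $\calN_m>0$, one has $\omega_\calP(m)\geq c\log\log m$ for some fixed $c>0$; this already gives $\calN_m\gg 2^{c\log\log m}\gg(1/2-\eps)\log\log m$. Here I would use that if $p$ represents $m'$ and $\ell\in\calP$ with $\ell\nmid m'$, then $p$ represents $\ell m'$ (composition with the principal class), which, together with the Landau-type asymptotic $|\{m\leq X:\calN_m>0\}|\asymp X(\log X)^{-1/2}$, shows that for each $\ell\in\calP$ the $p$-represented $m\leq X$ with $\ell\parallel m$ make up a $\gg 1/\ell$ proportion of all of them. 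Summing $1/\ell$ over $\ell\in\calP$ in a suitable range, where the sum is $\asymp h(D)^{-1}\log\log X$, and controlling the corresponding second moment, a Tur\'an--Kubilius / Chebyshev argument carried out over the sparse set $\{m:\calN_m>0\}$ gives $\omega_\calP(m)\geq c\log\log m$ outside a set of density zero in $\{m:\calN_m>0\}$, which finishes the proof.

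\textbf{Main obstacle.} The hard part is exactly this last analytic step: one must control the additive function $\omega_\calP$ not over all integers but over the thin set of integers represented by the \emph{specific} form $p$, which requires the asymptotic count of that set together with a uniform estimate for the number of represented integers divisible by one or two fixed primes — equivalently, showing that no anomalous bias toward non-principal ideal classes survives on a density-one subsequence. This is the technical content of Lemmas 2 and 3 of \cite{dias00}.
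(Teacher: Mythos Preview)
The paper does not supply its own proof of this lemma: it is quoted directly from \cite{dias00}, Lemmas~2 and~3, with no argument given in the text. So there is no ``paper's proof'' to compare against beyond the citation itself.

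Your sketch is a correct reconstruction of the expected argument and, as you yourself note, the substantive analytic step coincides with the content of the cited lemmas in \cite{dias00}. The ideal-theoretic reduction in Step~1 is standard and the lower bound $\calN_m\gg_p 2^{\omega_{\calP}(m)}$ via the binary choice $\mathfrak{l}$ versus $\overline{\mathfrak{l}}$ over principally split primes is clean and correct (and in fact overshoots the target dramatically: you end up with $(\log m)^{c}$ rather than $\log\log m$, so the specific constant $\tfrac12-\eps$ in the statement plays no role in your argument). The only place I would tighten the exposition is Step~2: the closure of $\{m:\calN_m>0\}$ under multiplication by $\ell\in\calP$ is fine, but to run Tur\'an--Kubilius over this thin set you need uniform asymptotics for the count of represented $m\leq X$ divisible by one or two fixed primes, and you correctly flag this as exactly the input from \cite{dias00}. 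In short, your proposal and the paper agree: both defer the work to \cite{dias00}, you just unpack what that work consists of.
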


We will also use the fact that the points in $\Lambda_m$ equidistribute in the following sense for a for a certain family of ellipses. We refer the reader to Appendix \ref{s:2D_equidistribution} for the definition of $\mathfrak{D}$ used in the following proposition.
\begin{prop}[Equidistribution for ellipses]\label{prop:equidistribution for ellipses}
Assume that $d=2$ and that the degree of $p$ is $2k=2$. Assume also that the positive definite quadratic form $p(x,y)=ax^2+bxy+cy^2$ is primitive and that the discriminant $-\delta=b^2-4ac$ is such that $\delta\in\mathfrak{D}$. Then, for each $\eps>0$, there exists a subset $\mathfrak{S}\subset\N$ of density one among the integers $m$ for which $\calN_m>0$ and a constant $C=(p,\eps)<+\infty$ such that for each $m\in\mathfrak{S}$ and each $\phi\in C^0(\Sigma_p)$,
\[
\left|\frac{1}{\mathcal{N}_m}\sum_{\lambda\in\Lambda_m}\phi(\lambda)-\int_{\Sigma_p}\phi(x)d\sigma_p(x)\right|\leq C\|\phi\|_{\infty}\left(\log(m)\right)^{-\frac{1}{2}\log\left(\frac{\pi}{2}\right)+\eps}\, .
\]
\end{prop}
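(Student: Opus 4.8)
The plan is to transfer the claim to a Weyl‑sum estimate for the \emph{angular} coordinates of the lattice points, and then feed in the relevant arithmetic input. First I would introduce an angular coordinate on $\Sigma_p$: since by Definition~\ref{def:equidistribution_measure} the measure $d\sigma_p$ is the pushforward of the normalised arc‑length measure on $S^1$ under $\omega\mapsto\omega/p(\omega)^{1/2}$, there is a diffeomorphism $\Sigma_p\cong\R/\Z$ carrying $d\sigma_p$ to Lebesgue measure $d\theta$; write $\theta_\lambda\in\R/\Z$ for the image of $\lambda\in\Lambda_m$. Expressing $\phi\in C^0(\Sigma_p)$ in this coordinate and approximating it by a trigonometric polynomial, the statement reduces to showing that, for each fixed integer $n\neq 0$ and along a density‑one set of admissible $m$,
\[
\frac{1}{\calN_m}\Bigl|\sum_{\lambda\in\Lambda_m}e(n\theta_\lambda)\Bigr|\ll_{n,\eps}(\log m)^{-\frac{1}{2}\log\left(\frac{\pi}{2}\right)+\eps}\,.
\]
Recovering the estimate for a general $\phi$ is then done by Fourier approximation combined with a diagonal argument over the degree of the approximating polynomial and over $\eps$; the delicate point is the uniformity in $\phi$, i.e.\ keeping the final constant dependent only on $(p,\eps)$, which is why the bound is ultimately applied only to test functions in a fixed finite‑dimensional space of smooth functions, where equivalence of norms makes the issue vacuous.

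Next I would set up the arithmetic dictionary. Because $p(x,y)=ax^2+bxy+cy^2$ is primitive and positive definite with discriminant $-\delta=b^2-4ac$, the classical theory of binary quadratic forms puts the solutions of $p(x,y)=m$ into bounded‑to‑one correspondence with the ideals of norm $m$ in the quadratic order of discriminant $-\delta$ lying in the ideal class attached to $p$. Under this correspondence $\theta_\lambda$ is, up to an affine normalisation, the argument of the associated algebraic number, so that $\lambda\mapsto e(n\theta_\lambda)$ is the restriction to $\Lambda_m$ of a Hecke Gr\"ossencharacter of infinity type $n$, and the Weyl sum above becomes a normalised character sum over ideals of norm $m$ in a fixed class.

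The core of the argument is then to invoke the equidistribution theorem of Dias \cite{dias00}. The hypothesis $\delta\in\mathfrak{D}$ from Appendix~\ref{s:2D_equidistribution} is precisely the condition under which his method --- combining subconvexity‑type bounds with a sieve argument that produces the exponent $\frac{1}{2}\log(\pi/2)$ --- applies and yields, for each fixed $n\neq 0$, the displayed bound on a density‑one subsequence of the $m$ with $\calN_m>0$. Together with Lemma~\ref{lemma:lattice_points_2D}, which guarantees $\calN_m\to+\infty$ along a density‑one sequence (so that the normalisation $1/\calN_m$ is meaningful), and with the reduction of the first paragraph, this gives the Proposition; the upper bound $\calN_m\ll_\eps m^\eps$ of Lemma~\ref{lemma:upper_bd_ellipses}, from Cilleruelo--C\'ordoba, enters only in handling the admissible range of $m$ uniformly.

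I expect the main obstacle to be the arithmetic input of \cite{dias00} itself: controlling the character sum over ideals of norm $m$ uniformly enough in the class group and in $n$ is exactly what forces the restriction $\delta\in\mathfrak{D}$ and caps the attainable rate at $(\log m)^{-\frac{1}{2}\log(\pi/2)+\eps}$ rather than a power of $m$, the slow rate reflecting the fact that $\calN_m$ is itself only of size about $\log\log m$ on average (cf.\ Lemma~\ref{lemma:lattice_points_2D}). The remaining difficulty is the soft one of uniformity in $\phi$: one must patch the per‑character estimates into a single density‑one set $\mathfrak{S}$ and a single constant, which requires letting the degree of the approximating trigonometric polynomial grow slowly with $m$, at the cost of the $\eps$ in the exponent.
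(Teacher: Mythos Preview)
Your overall plan is sound, but it takes a more elaborate route than the paper and partially re-derives machinery that the paper treats as a black box. The paper's proof is two sentences: it invokes Theorem~\ref{thm:dias} (Dias, restated in Appendix~\ref{s:2D_equidistribution}) with $T=m$, which directly gives a bound on the \emph{discrepancy} $\Delta_p(m)$ of the angular coordinates $\arg(\alpha_{x,y})$; this is already the statement for indicator functions of arcs, and one then passes to general $\phi\in C^0(\Sigma_p)$ by uniform approximation with step functions. Your reduction instead goes through individual Weyl sums $\calN_m^{-1}\sum_\lambda e(n\theta_\lambda)$ and Fourier approximation, and the arithmetic dictionary with ideal classes and Hecke Gr\"ossencharacters that you set up is essentially the \emph{content} of Dias's proof, not something one needs to redo here. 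Both routes are correct, but yours is longer and reproves part of the cited input; the paper simply quotes the end product, namely the discrepancy bound.

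Where your write-up is actually sharper than the paper is on the uniformity-in-$\phi$ issue. You correctly flag that neither step-function nor trigonometric approximation of a general continuous $\phi$ yields a bound depending only on $\|\phi\|_\infty$ (one really needs a modulus of continuity or total variation), and you resolve it by noting that in the applications only a fixed finite-dimensional family of smooth test functions is used. The paper's ``follows immediately'' glosses over exactly this point. So: your argument is correct, differs from the paper mainly in packaging (Weyl sums and characters versus a direct discrepancy quote), and is in one respect more honest about the soft step.
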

\begin{proof}
Theorem \ref{thm:dias} applied with $m=T$ yields the result for indicator functions instead of continuous functions. But continuous functions on $\Sigma_p$ can be approximated uniformly by sums of indicator functions so the proposition follows immediately.
\end{proof}

In contrast to Proposition \ref{prop:equidistribution for ellipses}, Cilleruelo and Córdoba showed that there exist ellipses, and (density $0$) sequences of energies, such that the lattice points belong to arbitrarily short arcs.
\begin{prop}[{\cite[Theorem 2]{cicoel}}]
\label{cilmep}
For every $\eps>0$ and for every integer $n$, there exists an ellipse $x^2+y^2/a=m$ such that all its lattice points are on the arcs $|y/x|<\eps$, and $\mathcal{N}_m>n$.
\end{prop}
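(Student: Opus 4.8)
The plan is to exhibit, for given $\eps>0$ and $n\in\N$, a \emph{very eccentric} ellipse — one elongated along the $x$-axis — obtained by anisotropically stretching a circle that already carries many lattice points. Concretely, I will choose an integer $L$ which is \emph{not} a perfect square but admits at least $n$ representations as a sum of two squares, then choose a large integer $b$, and finally consider the ellipse $x^2+b^2y^2=b^2L$, which is exactly $x^2+y^2/a=m$ with $a=1/b^2$ and $m=b^2L$.

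\textbf{Step 1 (many representations, no minor-axis point).} Take $L=5^s$ with $s$ odd and large enough that $r_2(L)=4(s+1)>n$, where $r_2$ counts ordered signed representations as a sum of two squares (for $N=5^s$ all divisors are $\equiv 1\pmod 4$, so $r_2(N)=4(s+1)$ by the classical two-square formula). Since $s$ is odd, $L$ is not a perfect square. (Equivalently one could take $L=p_1\cdots p_t$ with the $p_i$ distinct primes $\equiv 1\pmod 4$ and $t$ large; these exist by Dirichlet, $L$ is not a square, and $r_2(L)=4\cdot 2^t$.)

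\textbf{Step 2 (identifying the lattice points).} For any integer $b\ge 1$ set $m=b^2L$. If $(x,y)\in\Z^2$ satisfies $x^2+b^2y^2=b^2L$, then $b^2\mid x^2$, hence $b\mid x$; writing $x=bw$ gives $w^2+y^2=L$. Conversely each $(w,y)$ with $w^2+y^2=L$ yields the lattice point $(bw,y)$ on the ellipse. Thus the lattice points of this ellipse are precisely $\{(bw,y)\,:\,w^2+y^2=L\}$, and there are $r_2(L)>n$ of them, so $\calN_m>n$.

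\textbf{Step 3 (confining them to the arc).} Let $(x,y)=(bw,y)$ be such a lattice point. Since $L$ is not a perfect square we cannot have $w=0$, so $|w|\ge 1$, and $y^2=L-w^2\le L-1$. Hence $|y/x|=|y|/(b|w|)\le |y|/b\le \sqrt{L-1}/b<\sqrt{L}/b$. Choosing any integer $b>\sqrt{L}/\eps$ forces $|y/x|<\eps$ for every lattice point of the ellipse, completing the construction.

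The only genuine subtlety — and hence the ``main obstacle'' to watch — is the requirement in Step 3 that $L$ be chosen non-square: otherwise the representation $w=0$ would put a lattice point $(0,\pm\sqrt L)$ on the minor axis with $|y/x|=\infty$, ruining the arc condition. Producing many lattice points (Steps 1--2) is the routine half, via elementary two-square theory; the conceptual point is simply that one should make the ellipse \emph{extremely} elongated (take $a=1/b^2$ tiny) rather than near-circular, after which the free parameter $b$ pushes all of the (rescaled circle) points into the cone $|y/x|<\eps$.
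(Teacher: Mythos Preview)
Your argument is correct. The paper does not supply its own proof of this proposition: it is quoted verbatim as \cite[Theorem 2]{cicoel} and used only as a contrast to the equidistribution result of Proposition \ref{prop:equidistribution for ellipses}. So there is no ``paper's proof'' to compare against, but your construction is essentially the one Cilleruelo and C\'ordoba give. The key ideas --- manufacture many two-square representations of a non-square $L$, then stretch the circle $w^2+y^2=L$ by a large factor $b$ in the $x$-direction --- are exactly right, and you correctly flagged the one pitfall (avoid $L$ a perfect square so that no lattice point sits on the minor axis). The only step worth a second glance is the implication $b^2\mid x^2\Rightarrow b\mid x$, which holds for any integer $b$ by comparing prime valuations; you use it implicitly in Step 2 and it is fine.
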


\paragraph{Lattice point count and equidistribution for ellipsoids in dimension three.}
The following theorem contains the estimates on $\calN_m$ and the equidistribution rate in the case where $p$ is an ellipsoid in dimension three.
\begin{thm}[\cite{iwanbk}, section 11.6]\label{thm:arithmetic_dimension_3}
Assume that $d=3$ and that the degree of $p$ is $2k=2$. Recall the definition of $d\sigma_p$ from Definition \ref{def:equidistribution_measure}. Let $\eps>0$. Then, there exists a positive density sequence of integers $\mathfrak{S}=\mathfrak{S}(\eps,p)$ such that, for each $m\in\mathfrak{S}$ the following holds.
\begin{enumerate}
\item There exists a constant $C=C(p,\eps)<+\infty$ such that
\begin{equation}\label{eq:N_m_ellipsoids}
(1/C)m^{\frac{1}{2}-\eps}\leq \calN_m\leq C m^{\frac{1}{2}+\eps}\, .
\end{equation}
\item There exists $C=C(p,\eps)$ such that for each $\phi\in C^0(\Sigma_p)$ 
\[
\left|\frac{1}{\calN_m}\sum_{\lambda\in\Lambda_m}\phi(\lambda)-\int_{\Sigma_p} \phi(x) d\sigma_p(x)\right|\leq C\|\phi\|_\infty m^{-1/222+\eps}\, .
\]
\end{enumerate}
\end{thm}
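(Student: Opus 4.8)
The plan is to derive both assertions from the theory of modular forms of half-integral weight, essentially by assembling the results of Section~11.6 of \cite{iwanbk}; I only sketch the structure. After an invertible integral change of variables one may take $p$ to have integral matrix. The generating function of the representation numbers, $\theta_p(z)=\sum_{\lambda\in\Z^3}e(p(\lambda)z)$ for $z$ in the upper half-plane, is a modular form of weight $3/2$ on a congruence group $\Gamma_0(N)$ with a quadratic character, and its $m$-th Fourier coefficient is $\calN_m$. More generally, for a harmonic homogeneous polynomial $P$ of degree $\ell\geq 1$, the twisted theta series $\theta_{p,P}(z)=\sum_{\lambda\in\Z^3}P(\lambda)e(p(\lambda)z)$ is a \emph{cusp} form of weight $3/2+\ell$ whose $m$-th coefficient is $\sum_{p(\lambda)=m}P(\lambda)$. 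The first step is therefore to split $\theta_p$, and likewise each twisted series, into an Eisenstein part and a cuspidal part $g_p$.

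For the count \eqref{eq:N_m_ellipsoids}, I would use that by Siegel's theory the $m$-th coefficient of the Eisenstein part equals $m^{1/2}$ times the singular series $\mathfrak{s}_p(m)=\prod_q \delta_q(m)$, a product of local representation densities with $\delta_q(m)=1+O_p(1/q)$ for $q\nmid 2\det p$; hence $\mathfrak{s}_p(m)\ll_\eps m^\eps$ for all $m$, while $\mathfrak{s}_p(m)\gg_\eps m^{-\eps}$ as soon as $m$ avoids the finite union of congruence classes obstructing local solubility (the \emph{regular values}, cf. Section~\ref{sec:countlp}). On the cuspidal side the Hecke bound only gives an $m$-th coefficient $O(m^{3/4+\eps})$, so the crux is to invoke the nontrivial bound for Fourier coefficients of half-integral weight cusp forms established in \cite{iwanbk}, which for squarefree $m$ yields an $m$-th coefficient of $g_p$ that is $\ll_\eps m^{1/2-\eta+\eps}$ for an explicit $\eta>0$. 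Taking $\mathfrak{S}$ to be the (still positive density) intersection of the regular values with the squarefree integers then gives $\calN_m=m^{1/2}\mathfrak{s}_p(m)+O(m^{1/2-\eta+\eps})\gg m^{1/2-\eps}$ on $\mathfrak{S}$; the matching upper bound $\calN_m\ll_\eps m^{1/2+\eps}$ is the classical unconditional estimate for the number of representations by a positive definite ternary form.

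For the equidistribution statement it suffices, and is all that is needed in this paper, to treat $\phi$ a polynomial of bounded degree restricted to $\Sigma_p$ (for merely continuous $\phi$ one still gets convergence by the Weierstrass theorem, but without a uniform rate, since a finitely supported measure is never close in total variation to $d\sigma_p$). Decomposing such a $\phi$ by degree and spherical-harmonic type, the mean-zero (harmonic, degree $\ell\geq1$) components contribute, through the twisted theta series above, coefficients $\sum_{p(\lambda)=m}P_\ell(\lambda)\ll_\eps m^{(\ell+1)/2-\eta+\eps}$ by the same half-integral weight bound, while the radial components contribute Eisenstein main terms which, after dividing by $\calN_m$, reproduce precisely $\int_{\Sigma_p}\phi\,d\sigma_p$ up to a relative error $O(m^{-\eta+\eps})$; here one uses $|P_\ell(\lambda)|\asymp m^{\ell/2}$ on $\{p(\lambda)=m\}$ together with the lower bound $\calN_m\gg m^{1/2-\eps}$ from the first part. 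Tracking the value of $\eta$ coming out of the effective form of the cusp-form estimate in \cite{iwanbk} then gives the exponent $1/222$. The main obstacle — and the only genuinely deep ingredient — is exactly this subconvexity-type bound on Fourier coefficients of half-integral weight cusp forms (essentially Iwaniec's work on the Linnik equidistribution problem via theta lifts to $\mathrm{SL}_2(\Z)$); everything else is bookkeeping with local densities, the singular series, and finite-dimensional spaces of modular forms.
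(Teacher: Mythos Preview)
Your sketch is correct and is precisely the content of \cite{iwanbk}, section~11.6, which the paper simply cites; the paper's own proof consists of a one-line reference to Iwaniec plus the remark that density of polynomials in $C^0(\Sigma_p)$ extends the equidistribution to continuous test functions.

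You are in fact more careful than the paper on one point. As you observe, the bound $C\|\phi\|_\infty m^{-1/222+\eps}$ with $C$ independent of $\phi$ cannot hold for \emph{all} $\phi\in C^0(\Sigma_p)$: take $\phi$ a narrow bump supported near a single $\lambda\in\Lambda_m$ with $\|\phi\|_\infty=1$; the discrete average is $\asymp 1/\calN_m$ while the integral against $d\sigma_p$ can be made arbitrarily small, so the discrepancy is not $\ll m^{-1/222}$. The density argument the paper invokes yields equidistribution but not a uniform rate in the sup norm; a rate requires control of a smoothness norm of $\phi$ (or, as you do, restriction to polynomials of bounded degree). This does not affect anything downstream, since the only test functions actually used in Sections~\ref{s:main_proofs} and~\ref{s:covariance_proofs} are fixed quadratic polynomials in $\lambda$ (the entries of $\lambda\otimes\lambda^*$ and $(1-\Xi_m(\lambda))^2$), for which your argument via twisted theta series and Iwaniec's half-integral-weight bound gives exactly the stated exponent.
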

\begin{proof}
For the most part, these results are presented in \cite{iwanbk} section 11.6. Let us simply add that since polynomials are dense in $C^0(\Sigma_p)$ for the topology of uniform convergence in $\Sigma_p$, we may extend the equidistributions from \cite{iwanbk} to continuous functions immediately.
\end{proof}

\paragraph{Four correlations in dimensions two and three.}
The following proposition is an upper bound on the number of four correlations in dimension two and for ellipsoids in dimension three. Crucially, in dimension three, it relies on bounds for the number of lattice points on ellipses that are \textit{uniform on the ellipse} (see Lemma \ref{lemma:upper_bd_ellipses} above).
\begin{prop}\label{prop:four_correlations_dim_2_and_3}
Let $p$ be a homogeneous $d$-variate polynomial, of degree $2k$. For each $\eps>0$ there exists $C=C(\eps)<+\infty$ such that for each $m\in\mathfrak{S}$,
\begin{equation*}
|\calC_{4;m}|\leq \begin{cases}
C\calN_m^{3-1}, & d=2,\\
C\calN_m^{3-(1-\eps)}
& 2k=2, \ d=3,
\end{cases}
\end{equation*}
\end{prop}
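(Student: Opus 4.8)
\textbf{Proof plan for Proposition \ref{prop:four_correlations_dim_2_and_3}.}
The plan is to treat the two cases separately but with the same underlying geometric idea, which is an adaptation of Zygmund's observation to the ellipsoidal setting. Recall that $|\calC_{4;m}|$ counts quadruples $(\lambda,\mu,\nu,\iota)\in\Lambda_m^4$ with $\lambda+\mu+\nu+\iota=0$. The key observation is that once we fix two of the vectors, say $\lambda$ and $\mu$, the remaining pair $(\nu,\iota)$ is constrained by two equations: $\nu+\iota=-(\lambda+\mu)=:w$, and, since $\nu\in\Lambda_m$, we also have $p(\nu)=1$ after the rescaling (equivalently, $\nu$ lies on a fixed translate of $m^{1/2k}\Sigma_p$ while $\iota=w-\nu$ lies on another). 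So the number of valid completions $(\nu,\iota)$ equals the number of lattice points $\nu\in\Lambda_m$ such that $w-\nu$ also lies in $\Lambda_m$, i.e., the number of lattice points in the intersection $\Lambda_m\cap(w-\Lambda_m)$.

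For $d=2$ the argument is then immediate: the set $\Lambda_m$ lies on the ellipse $\{m^{1/2k}p(x)=m\}$ and $w-\Lambda_m$ lies on a translate of that same ellipse. Two distinct (non-identical) ellipses in the plane, being curves of degree two, intersect in at most $4$ points by Bézout, so for each fixed pair $(\lambda,\mu)$ with $\lambda+\mu\neq 0$ there are $O(1)$ completions; the quadruples with $\lambda+\mu=0$ contribute $\calN_m\cdot\calN_m = \calN_m^2$ (the diagonal correlations $(\lambda,-\lambda,\nu,-\nu)$). Summing over the $\calN_m^2$ choices of $(\lambda,\mu)$ gives $|\calC_{4;m}|\ll\calN_m^2 = \calN_m^{3-1}$, as claimed. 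One needs to check the degenerate subcase where the translated ellipse coincides with the original, which forces $w=0$, already accounted for.

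For $d=3$ the intersection of two quadric surfaces in $\R^3$ is a curve rather than a finite set, so the naive count fails; instead we fix \emph{three} of the four vectors. Fixing $\lambda,\mu,\nu$ determines $\iota=-(\lambda+\mu+\nu)$ uniquely, so $|\calC_{4;m}|\leq\calN_m^3$ trivially; the gain comes from a smarter grouping. The approach I would take is: fix $\lambda$ and $\mu$, set $w=\lambda+\mu$, and count pairs $(\nu,\iota)\in\Lambda_m^2$ with $\nu+\iota=-w$. Such $\nu$ lie on $\Lambda_m$ and on the translated ellipsoid $-w-\Lambda_m$; the intersection of the ellipsoid $\Sigma$ with a translate of itself lies on a plane (the radical plane of the two quadrics of equal ``radius''), hence $\nu$ ranges over $\Lambda_m\cap H$ for a fixed plane $H$ through a point determined by $w$. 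Now invoke the uniform-in-the-ellipse upper bound from Lemma \ref{lemma:upper_bd_ellipses}: the intersection of $m^{1/2k}\Sigma_p$ with a plane is (an affine image of) an ellipse, a translate of which carries $\Lambda_m\cap H$, and the number of integer points on it is $\ll_\eps m^\eps \ll_\eps \calN_m^\eps$ using the lower bound $\calN_m\gg m^{1/2-\eps}$ from \eqref{eq:N_m_ellipsoids}. Thus each of the $\calN_m^2$ pairs $(\lambda,\mu)$ with $w\neq 0$ admits $O_\eps(\calN_m^\eps)$ completions, and the $w=0$ diagonal contributes $\calN_m^2$; altogether $|\calC_{4;m}|\ll_\eps\calN_m^{2+\eps}=\calN_m^{3-(1-\eps)}$.

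The main obstacle is the $d=3$ planarity step: one must verify carefully that the set of $\nu$ with both $\nu$ and $-w-\nu$ on the (rescaled) ellipsoid genuinely lies on a fixed plane, and that the resulting planar section of the ellipsoid is a conic to which Lemma \ref{lemma:upper_bd_ellipses} applies after an affine change of coordinates with bounded distortion — in particular that the quadratic form defining that conic still has (essentially) integer coefficients, up to clearing a bounded denominator, so the uniform bound is genuinely applicable. Handling the degenerate planes (where the section is empty or a single point) and the case $w=0$ requires a short separate discussion but contributes only the expected $\calN_m^2$ term. Once these geometric points are settled, the counting is routine.
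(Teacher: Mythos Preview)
Your plan is correct and follows essentially the same route as the paper's proof: Zygmund's observation in $d=2$, and for $2k=2$, $d=3$ fixing $(\lambda,\mu)$, passing to the radical hyperplane, and invoking Lemma \ref{lemma:upper_bd_ellipses} on the resulting planar conic. Two small remarks: first, in the $d=2$ case the statement allows arbitrary degree $2k$, so the curves are not ellipses in general and B\'ezout gives $(2k)^2$ intersection points rather than $4$---the argument is otherwise unchanged; second, the obstacle you flag (ensuring the planar conic has integer coefficients so the uniform bound applies) is precisely the technical point the paper resolves, by multiplying through by the square of a nonzero integer coordinate of $Mw$ before eliminating one variable, which keeps everything in $\Z$ and inflates $m$ only polynomially.
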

\begin{rem}
As explained in section \ref{sec:corr}, we actually have $|\calC_{4;m}|\geq \calN_m^2$. Consequently, for $d=2$ the bound is optimal up to constants and for $2k=2$, $d=3$ it is tight up to arbitrarily small powers of $\calN_m$.
\end{rem}
\begin{proof}[Proof of Proposition \ref{prop:four_correlations_dim_2_and_3}]
In dimension $d=2$, two smooth distinct compact connected curves may intersect in at most $(2k)^2$ points, where $2k$ is the degree. Therefore, once $\lambda, \mu$ are fixed there are only finitely many choices for $\nu, \iota$. This is essentially Zygmund's observation \cite{zyg}.\\

In dimension $d=3$, if $2k=2$, we combine the same idea with a projection on an affine plane. More precisely, let $\lambda,\mu,\nu,\iota\in\Z^3$ be such that $p(\lambda)=p(\mu)=p(\nu)=p(\iota)=m$, $\lambda+\mu\neq 0$ and $\lambda+\mu+\nu+\iota=0$. Then, $\nu$ satisfies
\begin{equation}\label{eq:four_corr_prop_1}
p(\nu)=p(\lambda+\mu+\nu)=m\, .
\end{equation}
Let $M$ be a positive definite matrix with integer coefficients such that $\frac{1}{2}\langle x,Mx\rangle=p(x)$ for each $x\in\R^3$. Fix $\lambda$ and $\mu$ as above. Then, writing $w=\frac{1}{2}(\lambda+\mu)$, each solution $\nu$ of \eqref{eq:four_corr_prop_1} leads to a solution $x=\nu+w\in(1/2)\Z^3$ of
\[
\langle x-w,M(x-w)\rangle=\langle x+w,M(x+w)\rangle=2m
\]
which is in turn equivalent to
\begin{equation}\label{eq:four_corr_prop_2}
\langle x,Mw\rangle=0;\, \langle x,Mx\rangle =2m-\langle w,Mw\rangle\, .
\end{equation}
Without loss of generality, we may assume that the first coefficient of $Mw$ is a non-zero integer $h$ in the interval $[-C_1m^{1/2},C_1m^{1/2}]$ where $C_1=C_1(M)<+\infty$. In particular, multiplying the quadratic equation in \eqref{eq:four_corr_prop_2} by $h^2$ allows us to substitute both occurrences of $hx_1$ in the left-hand side by a linear combination of $x_2$ and $x_3$. We deduce that $\tilde{x}=(2x_2,2x_3)\in \Z^2$ satisfies a quadratic equation of the form
\[
\langle\tilde{x},Q\tilde{x}\rangle=m'
\]
where $Q$ has integer coefficients and $m'$ is an integer no greater than $C_2m^2$ where $C_2=C_2(M)<+\infty$. But by Lemma \ref{lemma:upper_bd_ellipses}, for each $\eps>0$ the number of such points $\tilde{x}$ is $O(m^\eps)$ uniformly in $Q$, that is, uniformly in the choice of $\lambda$ and $\mu$. Since, moreover, by Theorem \ref{thm:arithmetic_dimension_3}, $\calN_m\gg m^{1/2-\eps}$, we deduce that for each $\eps>0$, there exists $C_3=C_3(p)<+\infty$ such that for each $m\in\N$,
\[
|\calC_{4;m}|\leq C_3\calN_m^{2+\eps}\, .
\]
\end{proof}

\subsection{The high-dimensional case}\label{ss:arithmetic_hd}
The following proposition presents an equidistribution estimate in the case where $p$ is non-degenerate (see \eqref{eq:non_degenerate_polynomial}). Recall the measure $d\sigma_p$ from Definition \ref{def:equidistribution_measure}. The statement of Proposition \ref{prop:equidistribution_hd} involves the notion of \textit{Krull dimension} over $\R$ (see Chapter 1, section 1 of \cite{hartshorne}), which we will call \textit{algebraic dimension} throughout the paper for brevity.
\begin{prop}[\cite{magy02}, Theorem 1]\label{prop:equidistribution_hd}
Let $d'$ be the algebraic dimension of the variety $\{\nabla_xp=0\}$. Assume that
\[
d-d'>2^{2k}(2k-1)\, .
\]
Let $\mathfrak{S}\subseteq\Z$ be an infinite set of integers $m$ such that there exists $c>0$ for which $\calN_m\geq c m^{\frac{d}{2k}-1}$ whenever $m\in\mathfrak{S}$. Then, thre exist $C=C(p)<+\infty$ and $\delta=\delta(p)>0$ such that for any $\phi\in C^\infty_c(\R^d)$ and $m\in\mathfrak{S}$,
\[
\left|\frac{1}{\calN_m}\sum_{\lambda\in\Lambda_m}\phi(\lambda)-\int_{\Sigma_p} \phi(x)d\sigma_p(x)\right|\leq C\|\phi\|_\infty \calN_m^{-\delta}\, .
\]
\end{prop}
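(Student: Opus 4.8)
The plan is to establish this via the Hardy--Littlewood circle method, following Birch's treatment of forms in many variables; the hypothesis $d-d'>2^{2k}(2k-1)$ is precisely the threshold that makes the minor-arc analysis succeed for a single equation of degree $2k$. First I would reduce to a weighted integer-point count: by homogeneity of $p$, writing $\lambda=x/m^{1/2k}$ identifies $\Lambda_m$ with $\{x\in\Z^d:p(x)=m\}$, so that $\sum_{\lambda\in\Lambda_m}\phi(\lambda)=\sum_{x\in\Z^d,\,p(x)=m}\phi(x/m^{1/2k})$, and $\calN_m$ is the same sum with $\phi\equiv1$. It then suffices to produce, uniformly in a suitable class of smooth amplitudes, an asymptotic of the shape
\[
\sum_{x\in\Z^d,\,p(x)=m}\phi(x/m^{1/2k})=\mathfrak{S}_m\,Z_p\,m^{\frac{d}{2k}-1}\int_{\Sigma_p}\phi\,d\sigma_p+O\!\left(\|\phi\|_\infty\, m^{\frac{d}{2k}-1-\delta_0}\right)
\]
for some $\delta_0=\delta_0(p)>0$, where $\mathfrak{S}_m$ is the singular series and $Z_p$ the normalising constant from Definition \ref{def:equidistribution_measure}. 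Dividing this asymptotic for $\phi$ by the same asymptotic for $\phi\equiv1$ (i.e. by $\calN_m$, noting $\int_{\Sigma_p}d\sigma_p=1$) and using $\calN_m\geq cm^{\frac{d}{2k}-1}$ together with the matching upper bound $\calN_m\ll_p m^{\frac{d}{2k}-1}$ (both from Birch, cf. \eqref{sharp}, which forces $\mathfrak{S}_m\asymp1$ on $\mathfrak{S}$), the error becomes $O(\|\phi\|_\infty\,\calN_m^{-\delta})$ with $\delta=\delta_0/(\tfrac{d}{2k}-1)$, which is the claim.

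To prove the displayed asymptotic I would run the circle method: write the count as $\int_{\T}T_m(\alpha)\,d\alpha$ with $T_m(\alpha)=\sum_{x\in\Z^d}\phi(x/m^{1/2k})e(\alpha(p(x)-m))$, and split $\T$ into major arcs $\mathfrak{M}$ (neighbourhoods of rationals $a/q$ with $q\leq m^{\eta}$ for a small $\eta>0$) and minor arcs. On $\mathfrak{M}$, the standard Gauss-sum-versus-oscillatory-integral approximation replaces $T_m$ by a product of a complete exponential sum in $x\bmod q$ and an archimedean integral; summing over $\mathfrak{M}$ yields the singular series $\mathfrak{S}_m$ and the singular integral $\int_{\R^d}\phi(x/m^{1/2k})\,\delta(p(x)-m)\,dx$. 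Rescaling $x=m^{1/2k}y$ and applying the coarea formula turns the latter into $m^{\frac{d}{2k}-1}\int_{\Sigma_p}\phi\,\frac{dS_p}{|\nabla p|}=Z_p\,m^{\frac{d}{2k}-1}\int_{\Sigma_p}\phi\,d\sigma_p$, producing the main term.

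The crux, and the main obstacle, is the minor-arc bound $\int_{\mathrm{minor}}|T_m(\alpha)|\,d\alpha=O(\|\phi\|_\infty\,m^{\frac{d}{2k}-1-\delta_0})$. This is exactly where one invokes Birch's Weyl-differencing argument: differencing the exponential sum $2k-1$ times reduces matters to counting integer points on the bilinear variety attached to $\nabla p$, whose singular locus has dimension controlled by $d'$, and the hypothesis $d-d'>2^{2k}(2k-1)$ guarantees enough cancellation to beat the trivial bound $\calN_m\ll m^{\frac{d}{2k}-1}$, with the cutoff entering only through $\|\phi\|_\infty$ and its support via a smooth-partition/Poisson-summation step. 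Finally, since the statement allows general $\phi\in C_c^\infty(\R^d)$ while the method most naturally handles amplitudes at a fixed scale, I would either run it directly with $\phi$ as a smooth weight---tracking that, after optimising $\eta$, the error is still $O(\|\phi\|_\infty m^{\frac{d}{2k}-1-\delta_0})$ even though intermediate constants see finitely many derivatives of $\phi$---or first prove the asymptotic for indicators of smooth regions with constants uniform over the class, then sandwich $\phi$ between such indicators and balance the approximation error against $m^{-\delta_0}$. In either route the arithmetic content is wholly contained in Birch's theorem, and what remains is the bookkeeping described above.
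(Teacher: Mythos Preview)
The paper does not actually prove this proposition: it is quoted directly from \cite{magy02}, and the one-line ``proof'' in the paper merely remarks that the power-saving rate $\delta$ (which is not stated in Magyar's Theorem~1) can be read off from Magyar's argument. Your proposal, by contrast, is a sketch of the underlying proof in \cite{magy02}, which does indeed proceed via the Hardy--Littlewood circle method in Birch's framework exactly as you outline: major arcs give the singular series times the singular integral (which rescales to $\int_{\Sigma_p}\phi\,d\sigma_p$), and the minor-arc saving comes from Weyl differencing under the codimension hypothesis $d-d'>2^{2k}(2k-1)$. So you are reproducing the content of the cited reference rather than taking a different route.

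One point worth flagging: you correctly note that the circle method with a smooth weight most naturally produces error terms depending on finitely many derivatives of $\phi$, not just $\|\phi\|_\infty$, and you propose two workarounds. The cleaner one is the second (approximation by indicators/regions, then a sandwiching argument), and this is essentially how one passes from Magyar's stated result to the form given here; your first workaround (``track that after optimising $\eta$ the error is still $O(\|\phi\|_\infty\cdots)$'') is not quite right as written, since the Poisson step genuinely sees $\nabla\phi$. This is a minor bookkeeping issue rather than a gap in the overall strategy.
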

\begin{proof}
While the rate of decay is not specified in Theorem 1 of \cite{magy02}, it is explicitely given by its proof.
\end{proof}
To finish off the section, based on Theorem \ref{prop:birch_generic_systems} from \cite{birc62}, we prove Proposition \ref{prop:lattice_points_hd} and Theorem \ref{thm:correlations} which control the number of lattice points and $\ell$-correlations in the case where $p$ is non-degenerate (see \eqref{eq:non_degenerate_polynomial}).
\begin{thm}[\cite{birc62}, section 7, Theorem 1]\label{prop:birch_generic_systems}
Let $f_1,\dots,f_q$ be homogeneous polynomials of degree $2k$ with integer coefficients in $d_0$ variables. For each $m\in\N$, let $d_0'(m)$ be the algebraic dimension of the singular set of the variety
\begin{equation}\label{eq:diophantine_system}
f_1(x)=\, \dots=f_q(x)=m\, .
\end{equation}
There exist constants $c,C\in(0,+\infty)$ depending on $f_1,\dots,f_q$ such that the following holds. For each $m\in\N$ such that
\begin{equation}\label{eq:birch_generic_systems}
d_0-d_0'(m)>2^{2k-1}(2k-1)q(q+1)
\end{equation}
the number $\calN(m;f_1,\dots,f_q)$ of solutions $x\in\Z^{d_0}$ to \eqref{eq:diophantine_system} satisfies either $\calN(m;f_1,\dots,f_q)=0$ or
\[
c m^{\frac{d_0-2qk}{2k}}\leq \calN(m;f_1,\dots,f_q)\leq Cm^{\frac{d_0-2qk}{2k}}\, .
\]
\end{thm}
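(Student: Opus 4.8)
This theorem is quoted from \cite{birc62}, so the plan is to recall how it follows from the Hardy--Littlewood circle method as developed there, the only point needing comment being the passage from the homogeneous system $f_1=\dots=f_q=0$ treated by Birch to the inhomogeneous one $f_i=m$. In every application we have in mind the forms are elliptic, so that $f_i(x)=m$ forces $|x|_\infty\ll m^{1/2k}$ with an implied constant depending only on $f_1,\dots,f_q$; thus it suffices to count the solutions lying in the box $|x|_\infty\le P$, where $P:=\lceil c_0 m^{1/2k}\rceil$ and $c_0=c_0(f_1,\dots,f_q)$ is large enough. Writing $e(t)=e^{2\pi i t}$ and $T(\alpha):=\sum_{|x|_\infty\le P}e(\alpha_1f_1(x)+\dots+\alpha_qf_q(x))$ for $\alpha\in[0,1]^q$, orthogonality of additive characters yields
\[
\calN(m;f_1,\dots,f_q)=\int_{[0,1]^q}T(\alpha)\,e\!\left(-m(\alpha_1+\dots+\alpha_q)\right)d\alpha\, .
\]

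The first step is the minor arc estimate. Birch's key input is a Weyl-differencing bound combined with a lattice-point count: for $\alpha$ outside a union of small neighbourhoods of rationals with bounded denominators one has $|T(\alpha)|\ll P^{d_0-\theta}$ for some $\theta=\theta(f_1,\dots,f_q)>0$, and this is exactly where hypothesis \eqref{eq:birch_generic_systems} is used, the relevant quantity being the dimension of the locus where the Jacobian $(\partial_{x_j}f_i)_{i,j}$ has rank $<q$. Since this locus is a cone, its dimension differs by at most one from that of the singular set of $\{f_i=m\}$ for any $m$, namely $d_0'(m)$, so \eqref{eq:birch_generic_systems} is precisely the hypothesis one needs. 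Because the $f_i$ are homogeneous, this estimate is completely insensitive to the shift by $m$ in the phase; integrating $T(\alpha)e(-m\sum_i\alpha_i)$ over the minor arcs therefore contributes $O(P^{d_0-2qk-\theta'})$ for some $\theta'>0$, once one has normalised by the expected order $P^{d_0-2qk}$.

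On the major arcs one replaces $T$ by a product of a complete exponential sum in the denominator and a real oscillatory integral, sums the former over numerators and denominators and integrates the latter, and lets $P\to\infty$. This produces
\[
\calN(m;f_1,\dots,f_q)=\mathfrak{S}(m)\,\mathfrak{I}(m)\,P^{d_0-2qk}+O\!\left(P^{d_0-2qk-\delta}\right)
\]
for some $\delta>0$, where $\mathfrak{S}(m)=\prod_p\ell_p(m)$ is a singular series assembled from the $p$-adic solution densities of $f_i\equiv m$, and $\mathfrak{I}(m)$ is a singular integral over the rescaled real level set $\{y:|y|_\infty\le c_0,\ f_i(y)=m/P^{2k}\}$; absolute convergence of both, uniformly in $m$, comes from the same exponential sum and van der Corput estimates that governed the minor arcs, using the many-variables hypothesis \eqref{eq:birch_generic_systems}. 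Since $P^{2k}\asymp m$ we have $P^{d_0-2qk}\asymp m^{(d_0-2qk)/2k}$, so everything reduces to showing that $\mathfrak{S}(m)\mathfrak{I}(m)\asymp 1$, the implied constants being uniform in $m$, with the lower bound required only when $\calN(m;f_1,\dots,f_q)>0$.

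The upper bound $\mathfrak{S}(m)\mathfrak{I}(m)\ll 1$ is routine and needs no solvability assumption, so combined with the minor arc estimate it already gives $\calN(m;f_1,\dots,f_q)\ll m^{(d_0-2qk)/2k}$. The lower bound is the main obstacle and uses \eqref{eq:birch_generic_systems} a second time. Suppose $\calN(m;f_1,\dots,f_q)>0$ and fix an integer solution $x_0$, necessarily with $|x_0|_\infty\le P$; then $y_0:=x_0/P$ is a real point on the rescaled level set, and $m/P^{2k}$ lies in a fixed compact subset of $(0,+\infty)$. One upgrades $x_0$ to a \emph{nonsingular} solution --- real, or mod $p$ for each $p$ outside a finite set $S_0=S_0(f_1,\dots,f_q)$ --- which is possible precisely because \eqref{eq:birch_generic_systems} forces the singular set of $\{f_i=m\}$ to be thin enough that a nonsingular solution must exist; Hensel's lemma and the implicit function theorem then give $\ell_p(m)\ge\tfrac12$ for $p\notin S_0$ and $\mathfrak{I}(m)\gg 1$. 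For the finitely many $p\in S_0$, and for the compact family of rescaled real level sets, a single positive integer solution produces a positive lower bound that is automatically uniform in $m$ (there being only finitely many residues of $m$ modulo any fixed modulus, and a compact parameter range for $\mathfrak{I}$). Assembling the two arc contributions yields the stated two-sided bound; the genuinely delicate point, requiring the full strength of \eqref{eq:birch_generic_systems}, is to secure simultaneously the positive Weyl exponent $\theta$ on the minor arcs and the nonsingular local solubility underpinning the lower bound for $\mathfrak{S}(m)$.
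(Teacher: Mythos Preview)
The paper does not prove this theorem; it is quoted verbatim as a black-box result from Birch \cite{birc62}, with no argument supplied. Your proposal therefore goes beyond what the paper does by sketching how Birch's circle method yields the stated dichotomy.

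The sketch is broadly faithful to Birch's argument: the Weyl-differencing minor-arc bound under the many-variables hypothesis, the major-arc factorisation into singular series times singular integral, and the use of a single integer solution to secure uniform local lower bounds are all the right ingredients. A few caveats are worth recording. First, you explicitly restrict to the elliptic case to confine solutions to a box; this is legitimate for every application in the present paper but is not part of the theorem as stated, so strictly speaking you are proving a special case. Second, the bound $\ell_p(m)\ge\tfrac12$ for $p\notin S_0$ is not quite the form one obtains; what Birch's estimates give is $\ell_p(m)=1+O(p^{-1-\eta})$ uniformly in $m$, which is what makes the tail of the Euler product converge and stay bounded away from zero. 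Third, the passage between the paper's $d_0'(m)$ (the singular set of the level variety) and Birch's singular locus (where the Jacobian drops rank) deserves slightly more care than the one-line cone remark: for $m\neq 0$ the former is the intersection of the latter with the level hypersurface, so the dimension drops by at most one in the direction you need, but this should be said explicitly.

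None of these are genuine gaps, and since the paper itself offers no proof, your sketch is a reasonable stand-in for a reader wanting to see why the cited result holds.
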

The case $q=1$ yields the following sharp bounds for the lattice points \cite{birc62,magy02,magybk}, in line with the heuristics \eqref{heu}.
\begin{prop}
\label{prop:lattice_points_hd}
Assume that $d>2^{2k}(2k-1)$.
Then, uniformly for each $m\in\N$, either $\calN_m=0$ or
\begin{equation}\label{eq:lattice_points_hd}
\calN_m\asymp m^{\frac{d}{2k}-1}\, .
\end{equation}
In particular, the set $\mathfrak{S}$ of $m\in\N$ for which $\calN_m>0$ has positive density.
\end{prop}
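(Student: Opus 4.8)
The plan is to deduce this from Theorem~\ref{prop:birch_generic_systems}, the $q=1$ case of Birch's asymptotic, applied with $d_0=d$ and $f_1=p$. The only hypothesis that needs checking is the singular-set condition \eqref{eq:birch_generic_systems}, which for $q=1$ reads $d-d_0'(m)>2^{2k-1}(2k-1)\cdot2=2^{2k}(2k-1)$, where $d_0'(m)$ is the algebraic dimension of the singular locus of the variety $\{x:p(x)=m\}$. The first step is to observe that for every $m\geq1$ this singular locus is empty: a point $x$ in it would satisfy $p(x)=m$ and $\nabla p(x)=0$, but Euler's identity for the degree-$2k$ homogeneous polynomial $p$ gives $\langle x,\nabla p(x)\rangle=2k\,p(x)$, so $\nabla p(x)=0$ forces $p(x)=0\neq m$. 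Hence $d_0'(m)$ is the dimension of the empty set and \eqref{eq:birch_generic_systems} holds with room to spare for every $m\geq1$. Theorem~\ref{prop:birch_generic_systems} then yields constants $0<c\leq C<\infty$, depending only on $p$, such that for each such $m$ either $\calN_m=0$ or $cm^{\frac{d-2k}{2k}}\leq\calN_m\leq Cm^{\frac{d-2k}{2k}}$; since $\frac{d-2k}{2k}=\frac{d}{2k}-1$ and the constants are uniform in $m$, this is exactly \eqref{eq:lattice_points_hd} (the degenerate level $m=0$ contributes only the origin and is irrelevant).

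It then remains to show that $\mathfrak{S}=\{m\in\N:\calN_m>0\}$ has positive density, for which I would compare a lower bound on $\sum_{m\leq X}\calN_m$ with the per-level upper bound just obtained. Since $p$ has integer coefficients and is elliptic, $\sum_{m\leq X}\calN_m$ is the number of nonzero $\mu\in\Z^d$ with $p(\mu)\leq X$. Ellipticity makes $p$ proper, so $\{x:p(x)\leq1\}$ is compact, and $p(0)=0$, so it is a neighbourhood of the origin; homogeneity gives $\{x:p(x)\leq X\}=X^{1/2k}\{x:p(x)\leq1\}$, which therefore contains a ball of radius $\gg X^{1/2k}$ about $0$ and hence $\gg_p X^{d/2k}$ lattice points, so $\sum_{m\leq X}\calN_m\gg_p X^{d/2k}$. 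On the other hand $d>2^{2k}(2k-1)\geq 2k$, so $\frac{d}{2k}-1\geq0$ and every nonzero $\calN_m$ with $m\leq X$ is at most $CX^{\frac{d}{2k}-1}$. Writing $N(X)=|\mathfrak{S}\cap[1,X]|$ we obtain $X^{d/2k}\ll_p\sum_{m\leq X}\calN_m\leq CX^{\frac{d}{2k}-1}N(X)$, hence $N(X)\gg_p X$, which is the asserted positive density.

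There is no serious obstacle here: the substantive input, Birch's circle-method asymptotic, is imported as Theorem~\ref{prop:birch_generic_systems}, and the only points demanding care are the vanishing of the singular locus — a one-line consequence of ellipticity and Euler's relation — and the elementary density bookkeeping above, where one must use the two-sided estimate on $\calN_m$ in the correct direction and discard the degenerate level $m=0$.
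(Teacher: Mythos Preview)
Your proof is correct and follows essentially the same route as the paper's: apply Birch's theorem with $q=1$ after noting that ellipticity forces $p^{-1}(m)$ to be smooth for $m\geq 1$ (you make the Euler-identity step explicit, whereas the paper just says ``since $p$ is elliptic, $p^{-1}(m)$ is smooth''), and then deduce positive density by comparing the lattice-point count in $\{p\leq X\}$ with the per-level upper bound on $\calN_m$. The density bookkeeping is identical to the paper's one-line chain of inequalities.
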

\begin{proof}
Since $p$ is elliptic, for any $m\in\N$, the set $p^{-1}(m)$ is smooth. Therefore Theorem \ref{prop:birch_generic_systems} applies directly and yields \eqref{eq:lattice_points_hd}. By
\[
T^{\frac{d}{2k}}\ll\left|\{x\in\Z^d\, :\, p(x)\leq T\}\right|\leq \sum_{0\leq m\leq T}\calN_m\overset{\eqref{eq:lattice_points_hd}}{\ll}  \left|\{m\in\Z\cap[0,T]\, :\, \calN_m>0\}\right|T^{\frac{d}{2k}-1}
\]
we deduce, as announced, that the set $\mathfrak{S}$ of $m\in\N$ for which $\calN_m>0$ has positive density. 
\end{proof}

\begin{thm}\label{thm:correlations}
Let $\overline{d}=d-\inf_{x\in\R^d\setminus\{0\}}\textup{rank}(\nabla^2 p(x))$. Assume that
\begin{equation}\label{eq:general_nondegeneracy_condition}
\min\{d,(\ell-2)(d-\overline{d})\}>\ell(\ell+1)\cdot 2^{2k-1}(2k-1)
\end{equation}
Then, uniformly for $m\in\N$, either $\calC_{\ell;m}=\emptyset$ or
\[
|\calC_{\ell;m}|\asymp m^{\frac{(\ell-1)d-2\ell k}{2k}}\asymp \calN_m^{(\ell-1)-\frac{2k}{d-2k}}\, .
\]
\end{thm}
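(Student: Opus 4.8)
The plan is to express $|\calC_{\ell;m}|$ as the number of integer solutions of a system of $\ell$ forms of degree $2k$ in $(\ell-1)d$ variables and then invoke Theorem~\ref{prop:birch_generic_systems}; essentially all of the work lies in verifying the dimension hypothesis \eqref{eq:birch_generic_systems}. After the rescaling that identifies $\Lambda_m$ with $\{\mu\in\Z^d:p(\mu)=m\}$, the set $\calC_{\ell;m}$ is in bijection with the set of $(\mu_1,\dots,\mu_\ell)\in(\Z^d)^\ell$ with $p(\mu_1)=\dots=p(\mu_\ell)=m$ and $\mu_1+\dots+\mu_\ell=0$. Eliminating $\mu_\ell=-(\mu_1+\dots+\mu_{\ell-1})$ and using that $p$ is even (homogeneous of even degree), one gets $|\calC_{\ell;m}|=\calN(m;f_1,\dots,f_\ell)$ in the notation of Theorem~\ref{prop:birch_generic_systems}, with $d_0=(\ell-1)d$, $q=\ell$, $f_i(x^{(1)},\dots,x^{(\ell-1)})=p(x^{(i)})$ for $1\le i\le\ell-1$, and $f_\ell(x^{(1)},\dots,x^{(\ell-1)})=p(x^{(1)}+\dots+x^{(\ell-1)})$, all homogeneous of degree $2k$. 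Hence, once the hypothesis is checked, Theorem~\ref{prop:birch_generic_systems} gives, uniformly in $m$, that either $\calC_{\ell;m}=\emptyset$ or $|\calC_{\ell;m}|\asymp m^{(d_0-2\ell k)/2k}=m^{((\ell-1)d-2\ell k)/2k}$.

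To check \eqref{eq:birch_generic_systems} I would bound the dimension $d_0'(m)$ of the singular set of $V_m=\{f_1=\dots=f_\ell=m\}$ (assuming $V_m\ne\emptyset$, as otherwise $|\calC_{\ell;m}|=0$). Writing $s=x^{(1)}+\dots+x^{(\ell-1)}$, the Jacobian of $(f_1,\dots,f_\ell)$ has, for $i<\ell$, its $i$-th row equal to $\nabla p(x^{(i)})$ placed in the $i$-th block of $d$ coordinates (and zero elsewhere), and its $\ell$-th row equal to $(\nabla p(s),\dots,\nabla p(s))$. Since $p$ is elliptic and $m\ge 1$, every $x^{(i)}$ and $s$ are nonzero on $V_m$, so all these gradients are nonzero; the first $\ell-1$ rows then have disjoint supports and are independent, so a point of $V_m$ is singular iff $\nabla p(s)$ is parallel to $\nabla p(x^{(i)})$ for each $i\le\ell-1$ — in particular $\nabla p(x^{(1)}),\dots,\nabla p(x^{(\ell-1)})$ are then mutually parallel. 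The geometric input is: for every $u\in\R^d\setminus\{0\}$ the set $C_u=\{y\in\R^d:\nabla p(y)\in\R u\}$ is a cone (union of lines through $0$) of dimension $\le\overline d+1$; indeed, the composition of $y\mapsto\nabla p(y)$ on $\R^d\setminus\{0\}$ with the projection $\R^d\setminus\{0\}\to\mathbb{P}^{d-1}$ has differential of rank $\ge\textup{rank}(\nabla^2p(y))-1\ge d-\overline d-1$, so its fibres have dimension $\le\overline d+1$. As $C_u$ is a cone through the origin while $\{p=m\}$ is compact (hence contains no component of $C_u$), the slice $\{p=m\}\cap C_u$ has dimension $\le\overline d$. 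Projecting the singular locus of $V_m$ by $x\mapsto x^{(\ell-1)}$: the image lies in $\{p=m\}$ (dimension $\le d-1$), and over a fixed $x^{(\ell-1)}$ the coordinates $x^{(1)},\dots,x^{(\ell-2)}$ each lie in $\{p=m\}\cap C_{\nabla p(x^{(\ell-1)})}$ (dimension $\le\overline d$). Hence $d_0'(m)\le(d-1)+(\ell-2)\overline d$.

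Then $d_0-d_0'(m)\ge(\ell-1)d-(d-1)-(\ell-2)\overline d=(\ell-2)(d-\overline d)+1$, which by \eqref{eq:general_nondegeneracy_condition} exceeds $\ell(\ell+1)\cdot2^{2k-1}(2k-1)=q(q+1)2^{2k-1}(2k-1)$; this is exactly condition \eqref{eq:birch_generic_systems}. So Theorem~\ref{prop:birch_generic_systems} yields, uniformly in $m$, that $\calC_{\ell;m}=\emptyset$ or $|\calC_{\ell;m}|\asymp m^{((\ell-1)d-2\ell k)/2k}$. Finally, \eqref{eq:general_nondegeneracy_condition} also forces $d>\ell(\ell+1)2^{2k-1}(2k-1)\ge 3\cdot 2^{2k}(2k-1)>2^{2k}(2k-1)$, so Proposition~\ref{prop:lattice_points_hd} applies; in particular, whenever $\calC_{\ell;m}\ne\emptyset$ we have $\calN_m\ge1$ and hence $\calN_m\asymp m^{(d-2k)/2k}$. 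Substituting $m\asymp\calN_m^{2k/(d-2k)}$ and simplifying the exponent gives $m^{((\ell-1)d-2\ell k)/2k}\asymp\calN_m^{(\ell-1)-2k/(d-2k)}$, which is the claimed estimate.

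The main obstacle is the dimension bound for the singular locus. Three points deserve care: justifying $\dim C_u\le\overline d+1$, where the rank bound on $\nabla^2p$ holds only away from the origin and $\nabla p$ need not be a submersion, so the fibre-dimension bound should be applied stratum by stratum (or after passing to Zariski closures and using upper semicontinuity of fibre dimension); the elementary inequality $\dim(\textup{total})\le\dim(\textup{base})+\max_{\textup{fibre}}\dim$ for the projection $x\mapsto x^{(\ell-1)}$, valid here for semialgebraic sets; and the fact that the version of Birch's theorem invoked measures the singular set of the affine slice $V_m$ rather than of the projective cone of $f_1,\dots,f_\ell$ — this is what makes the argument close, as measuring the cone would cost an extra additive $\ell-2$ in the bound and \eqref{eq:general_nondegeneracy_condition} would no longer suffice.
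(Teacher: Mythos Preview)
Your proof follows the same route as the paper's: eliminate $\mu_\ell$, apply Theorem~\ref{prop:birch_generic_systems} with $q=\ell$ and $d_0=(\ell-1)d$, and bound the dimension of the singular locus. Your treatment of the rank-drop set is in fact more precise than the paper's. The paper asserts that, outside $E_1$, the Jacobian has full rank unless $\nabla p(x_1)=\dots=\nabla p(x_{\ell-1})$ (equality), and bounds that set via a Zariski--tangent--space argument; but the genuine condition, as you write, is only \emph{proportionality} of the $\nabla p(x^{(i)})$. You handle this correctly by working on $V_m$, bounding $\dim C_u\le\overline d+1$ from the Hessian rank, and then slicing the cone $C_u$ by the compact hypersurface $\{p=m\}$ to drop one dimension. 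This yields $d_0'(m)\le(d-1)+(\ell-2)\overline d$, one better than the paper's $d+(\ell-2)\overline d$; you also avoid the $E_1$ term altogether because on $V_m$ every $x^{(i)}$ is nonzero.

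Your closing caveat is well placed and worth keeping. The paper states Theorem~\ref{prop:birch_generic_systems} with $d_0'(m)$ the dimension of the singular set of the \emph{affine slice} $V_m$; under that formulation your argument is complete. Birch's original theorem, however, controls the full rank-drop locus $V^*\subset\R^{d_0}$, independent of $m$; your slicing by $\{p=m\}$ is then unavailable and one only gets $\dim V^*\le d+(\ell-2)(\overline d+1)$, which --- exactly as you note --- falls short of \eqref{eq:general_nondegeneracy_condition} by $\ell-2$. The paper's own bound via the equality set $E_2$ does not cover $V^*$ either, for the same proportionality-versus-equality reason, so the issue is present in both proofs. If you want a version robust to the formulation of Birch, either verify that the slice-singular-set version is legitimate here, or strengthen the hypothesis by replacing $(\ell-2)(d-\overline d)$ with $(\ell-2)(d-\overline d-1)$.
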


\begin{rem}
In particular, the $\ell$-correlations of lattice points on ellipsoids satisfy
\begin{equation}
\label{correlli}
|\calC_{\ell;m}|\asymp\calN_m^{\ell-1-\frac{2}{d-2}}
\end{equation}
as soon as $d>2\ell(\ell+1)$. Indeed, here $2k=2$ and $\overline{d}=0$ (Remark \ref{rem:41.1}). The estimate \eqref{correlli} for ellipsoids is consistent with \eqref{corrsphu} and \eqref{corrsphl} for spheres. In the case of $4$-correlations, $d>2\ell(\ell+1)$ is just $d\geq 41$ (recall Remark \ref{rem:41.2}).
\end{rem}

To prove Theorem \ref{thm:correlations}, we invoke \cite[Theorem 1]{birc62} (see Theorem \ref{prop:birch_generic_systems}). Recall the definition \eqref{eq:degeneracy} of the degeneracy $\overline{d}$ of $p$.

\begin{proof}[Proof of Theorem \ref{thm:correlations}]
One considers the system
\begin{equation*}
\begin{cases}
p(\lambda_1)=\dots=p(\lambda_\ell)=m
\\
\lambda_1+\dots+\lambda_\ell=0,
\end{cases}
\end{equation*}
for $(\lambda_1,\dots,\lambda_\ell)\in\Z^{\ell d}$. This is equivalent to counting solutions to the system
\begin{equation}
\label{syst1}
p(x_1)=\dots=p(x_{\ell-1})=p(x_1+\dots+x_{\ell-1})=m, \qquad x,\dots,x_{\ell-1}\in\Z^{d}\, .
\end{equation}
Defining the new variable $w:=(x_1,\dots,x_{\ell-1})\in\Z^{(\ell-1)d}$, we rewrite \eqref{syst1} as
\begin{equation}
\label{syst2}
p_1(w)=\dots=p_\ell(w)=m\, .
\end{equation}
We are in a position to apply Theorem \ref{prop:birch_generic_systems} with parameters
\[
q=\ell, \, d_0=(\ell-1)d\, .
\]
Checking the inequality condition \eqref{eq:birch_generic_systems} requires computing the algebraic dimension $d_0'(m)$ of the subspace of $\R^{(\ell-1)d}$ where the $\ell\times (\ell-1)d$ Jacobian $J$ of \eqref{syst2} is not full-rank. The first $(\ell-1)$ rows of $J$ are already in row echelon form, with rank $(\ell-1)$ as long as each coordinate $x_j$ of $w$ is such that $\nabla p (x_j)\neq 0$. Since $p$ is elliptic homogeneous, its gradient vanishes only at zero and the dimension of the variety
\[
E_1=\cup_{j=1}^{\ell-1}\{\nabla p(x_j)=0\}
\]
is $(\ell-1)d-d=(\ell-2)d$.\\

On the complement of $E_1$, $J$ is of rank $\ell$ whenever its last row is not a linear combination of the first $\ell-1$ rows. Since $J_{\ell,j}=\dots=J_{\ell,(\ell-2)d+j}$ for $j=1,\dots,d$, to obtain a full-rank Jacobian we need to rule out
\begin{equation*}
E_2=\{\nabla p(x_1)=\dots=\nabla p(x_{\ell-1})\}\, .
\end{equation*}
The algebraic dimension of $E_2$ is at most equal to the dimension of the Zariski tangent space of $E_2$ at any point (see \cite{hartshorne} Exercise 5.10). To compute this dimension, we compute the rank of the differential of the following map
\[
(x_1,x_2,\dots,x_{\ell-1})\mapsto (\nabla p(x_1)-\nabla p(x_2),\dots,\nabla p(x_1)-\nabla p(x_{\ell-1}))\, .
\]
But the rank of the differential map is at least $(\ell-2)$ times the minimum of the rank of the Hessians $\nabla^2 p(w)$ for $w\in\R^d\setminus\{0\}$, which is $d-\overline{d}$. Hence, the dimension of the Zariski tangent space of $E_2$ at any point is at most $(\ell-1)d-(\ell-2)(d-\overline{d})=d+(\ell-2)\overline{d}$. All in all, $d_0'(m)\leq \max\{(\ell-2)d,d+(\ell-2)\overline{d}\}$. By Theorem \ref{prop:birch_generic_systems}, we conclude that, if
\begin{equation}
\label{cond1}
(\ell-1)d-\max\{(\ell-2)d,d+(\ell-2)\overline{d}\}=\min\{d,(\ell-2)(d-\overline{d})\}>\ell(\ell+1)\cdot 2^{2k-1}(2k-1),
\end{equation}
then, by Theorem \ref{prop:birch_generic_systems} there exist two constants $c=c(p)>0$, $C=C(p)<+\infty$ such that for each $m\in\N$, either $|\calC_{\ell;m}|=0$ or
\begin{equation}\label{eq:l_corr_estimate}
cm^{\frac{(\ell-1)d-2\ell k}{2k}}\leq|\calC_{\ell;m}|\leq Cm^{\frac{(\ell-1)d-2\ell k}{2k}}\, .
\end{equation}

Moreover, thanks to Proposition \ref{prop:lattice_points_hd}, as long as the bound
\begin{equation}
\label{cond2}
d>2^{2k}(2k-1),
\end{equation}
holds, then, for each $m\in\N$, either $\calN_m=0$ or
\begin{equation*}
\calN_m\asymp m^{\frac{d-2k}{2k}}\, .
\end{equation*}

Note that if $\calC_{\ell;m}$ is non empty, then $\calN_m>0$. Finally, note that \eqref{cond1} implies \eqref{cond2}. All in all, we conclude that whenever \eqref{cond1} holds, then, uniformly for $m\in\N$, either $\calC_{\ell;m}=\emptyset$ or
\begin{equation*}
|\calC_{\ell;m}|\asymp m^{\frac{(\ell-1)d-2\ell k}{2k}}\asymp \calN_m^{\frac{(\ell-1)d-2\ell k}{d-2k}}\, .
\end{equation*}
\end{proof}

\section{Main intermediate results and proof of Propositions \ref{prop:variance_upper_bound} and \ref{prop:varcur}, and of Theorems \ref{thm:variance} and \ref{thm:clt}}\label{s:main_proofs}

In the present section, we state two key intermediate results, namely Propositions \ref{prop:variance} and \ref{prop:second_chaos}, and use them in conjunction with the arithmetic results from section \ref{s:arithmetic} above to prove our main results.\\

Recall the definitions of $\Upsilon$ and $\Psi$ from \eqref{eq:upsilon_psi_definition}.

\begin{prop}[Arithmetic formula for the variance]\label{prop:variance}
Assume that $p$ satisfies Assumptions \ref{as:ample} and \ref{as:nd_eigvals}. For each $m\in\mathfrak{S}$ and for each $\lambda\in\Lambda_m$, let
\[
\Xi_m(\lambda)=\Upsilon(\Omega_m)^{-1}\langle\lambda,\Psi(\Omega_m)\lambda\rangle\, .
\]
Then, uniformly for $m\in\N$,
\[
\textup{Var}(\calV_m)=\frac{\pi}{\calN_m}\E[\calV_m]^2\frac{1}{\calN_m}\sum_{\lambda\in\Lambda_m}\br{1-\Xi_m(\lambda)}^2+O\br{\frac{L_m|\calC_{4;m}|}{\calN_m^4}}\, .
\]
The constant implied by the $O$ depends on the sequence $(\Omega_m)_m$.
\end{prop}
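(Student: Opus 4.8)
The plan is to derive Proposition~\ref{prop:variance} from the Kac-Rice representation of $\mathrm{Var}(\calV_m)$ combined with a Taylor expansion of the second intensity $K_{2;m}$ around the diagonal, following the scheme outlined in section~1.4. First I would write
\[
\mathrm{Var}(\calV_m)=\int_{\T^d}\br{K_{2;m}(x)-\E[\calV_m]^2}\dif x
\]
using the two Kac-Rice formulas for the first and second moment of the nodal volume. The random vector $(F_m(0),\nabla F_m(0),F_m(x),\nabla F_m(x))$ is Gaussian, non-degenerate for $m$ large by Assumption~\ref{as:ample}, and $K_{2;m}(x)$ equals $\E\brb{\,|\nabla F_m(0)|\,|\nabla F_m(x)|\ \big|\ F_m(0)=F_m(x)=0}$ times the density of $(F_m(0),F_m(x))$ at $(0,0)$. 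Using Berry's method, I would express this conditional expectation explicitly in terms of $r_m(x)$ and its first and second derivatives at $x$, rescaled by $L_m$ so that the quantities are $O(1)$; note that $\sqrt{\mathrm{Var}(\calV_m)}$ scales like $\sqrt{L_m}$, which explains the $L_m$ factors in the error term.

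The key step is the Taylor expansion of the integrand around $x=0$. Away from the \emph{singular set} $S_m$ of Definition~\ref{def:singular_set} — where $r_m$ or its derivatives are not small — one expands $K_{2;m}(x)-\E[\calV_m]^2$ to second order in the covariance data. The zeroth and first order terms integrate to something that, after using $\int_{\T^d}r_m(x)^a\,\partial^\beta r_m(x)\cdots\dif x$-type identities, collapses into sums over $\Lambda_m$; the crucial point is that the leading nonvanishing contribution is quadratic in $r_m$ and its derivatives, and integrating a product of two such factors over $\T^d$ produces, by orthogonality of the characters $e(m^{1/2k}\langle\lambda,\cdot\rangle)$, a diagonal sum $\frac{1}{\calN_m^2}\sum_{\lambda\in\Lambda_m}(\text{something depending on }\lambda)$. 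I would then identify that ``something'' as $\pi\,\E[\calV_m]^2\,\calN_m^{-1}(1-\Xi_m(\lambda))^2$ by matching the explicit Gaussian expectation formulas against the definitions of $\Upsilon$ and $\Psi$ in \eqref{eq:upsilon_psi_definition} — here one uses that the covariance of $\frac{1}{\sqrt{L_m}}\nabla F_m(0)$ is $\Omega_m$ and that the relevant Gaussian integrals are exactly the Mehler-type integrals defining $\Upsilon$ and $\Psi$. The higher-order terms in the Taylor expansion, when integrated, yield higher moments of $r_m$ such as $\int r_m^4$, which equal $|\calC_{4;m}|/\calN_m^4$ up to constants, and these feed into the $O(L_m|\calC_{4;m}|/\calN_m^4)$ remainder after multiplication by the overall $L_m$ scaling.

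Two subsidiary estimates are needed. One must control the contribution of the singular set $S_m$: by bounding $\mathrm{Vol}(S_m)$ in terms of $\calN_m$ and $|\calC_{4;m}|$ (using Chebyshev on the relevant trigonometric polynomials $\sum r_m$-type expressions, whose $L^2$ or $L^4$ norms are arithmetic quantities) and bounding $K_{2;m}$ crudely there via Cauchy-Schwarz, one checks that $\int_{S_m}|K_{2;m}-\E[\calV_m]^2|\dif x=O(L_m|\calC_{4;m}|/\calN_m^4)$. Also, one needs that the remainder in the Taylor expansion is uniformly controlled off $S_m$, which follows from the non-degeneracy of the rescaled covariance matrix $\Omega_m$ (Assumption~\ref{as:nd_eigvals}) ensuring the relevant determinants stay bounded away from zero. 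I expect the main obstacle to be the bookkeeping in the Taylor expansion: organizing the many terms arising from differentiating $|\nabla F_m(0)||\nabla F_m(x)|$ inside the conditional Gaussian expectation, verifying that all contributions of order lower than the quadratic one either vanish by symmetry (parity in $x\mapsto -x$, or vanishing of $\int_{\T^d}\partial^\beta r_m$ for $\beta\neq 0$) or get absorbed into the main term, and then matching the surviving quadratic term precisely with $(1-\Xi_m(\lambda))^2$ and the constant $\pi$. This is where the bulk of the computation — deferred to section~\ref{s:covariance_proofs} in the paper — lies.
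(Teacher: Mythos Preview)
Your proposal is correct and follows essentially the same route as the paper's proof in section~\ref{s:covariance_proofs}: Kac--Rice for the second moment, Berry's integral representation to expand $\E[|v_1||v_2|]$ (Lemma~\ref{lemma:berry_s_method}), the singular/non-singular splitting of Definition~\ref{def:singular_set} with the crude bound on $S_m$ via $\int r_m^4$ (Lemma~\ref{lemma:contribution_of_the_singular_set}), and orthogonality of characters to turn the quadratic terms into the diagonal sum $\frac{1}{\calN_m^2}\sum_\lambda(1-\Xi_m(\lambda))^2$ while the quartic terms give $|\calC_{4;m}|/\calN_m^4$ (Lemma~\ref{lemma:remainder_estimates}). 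The only cosmetic discrepancy is that the paper scales $K_{2;m}$ by $L_m^{-1}$ from the outset (equation~\eqref{eq:K2_def}), so the $L_m$ factor reappears at the very end rather than being carried throughout.
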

The proof of Proposition \ref{prop:variance} can be found in subsection \ref{ss:variance_conclusion}. It follows from the contents of section \ref{s:covariance_proofs}.

\begin{prop}[Arithmetic formula for the second chaotic projection]\label{prop:second_chaos}
For each $m\in\N$, with the notations of Proposition \ref{prop:variance},
\[
\calV_m[2]=\sqrt{\frac{L_m}{2\pi}}\Upsilon(\Omega_m)\frac{1}{\calN_m}\sum_{\lambda\in\Lambda_m}\left(\Xi_m(\lambda)-1\right)(|\zeta_\lambda|^2-1)\, .
\]
Moreover,
\[
\textup{Var}(\calV_m[2])=\frac{\pi}{\calN_m}\E[\calV_m]^2\frac{1}{\calN_m}\sum_{\lambda\in\Lambda_m}\left(1-\Xi_m(\lambda)\right)^2\, .
\]
\end{prop}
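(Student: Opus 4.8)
The plan is to obtain $\calV_m[2]$ from the Kac--Rice (co-area) representation of $\calV_m$, performing the Wiener chaos expansion term by term, and then to read off the variance from the explicit formula. Write $\calV_m=\int_{\T^d}\delta_0(F_m(x))\,|\nabla F_m(x)|\,dx$, understood as the $L^2(\mathbb P)$-limit as $\eps\to0$ of $\frac1{2\eps}\int_{\T^d}\mathbf 1_{\{|F_m(x)|<\eps\}}|\nabla F_m(x)|\,dx$ (all second moments involved are finite because, by Assumption \ref{as:ample}, $(F_m(x),\nabla F_m(x))$ is non-degenerate, so the chaos projections are well defined). Since the projection $[\,\cdot\,]_q$ onto $C_q$ is continuous and commutes with $\int dx$, one has $\calV_m[2]=\int_{\T^d}[\delta_0(F_m(x))|\nabla F_m(x)|]_2\,dx$. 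Stationarity makes $r_m$ even, hence $\E[F_m(x)\partial_jF_m(x)]=0$, so $F_m(x)$ and $\nabla F_m(x)$ are independent for each fixed $x$; for a product of a function of $F_m(x)$ and a function of $\nabla F_m(x)$ the chaos projections then satisfy $[AB]_2=[A]_0[B]_2+[A]_1[B]_1+[A]_2[B]_0$, and here $[\delta_0(F_m(x))]_1=0$ (the Hermite series $\delta_0=\sum_n\frac{He_n(0)}{\sqrt{2\pi}\,n!}He_n$, with $He_n$ the probabilists' Hermite polynomials, has only even terms) and $[\,|\nabla F_m(x)|\,]_1=0$ ($|\cdot|$ is even), in accordance with $\calV_m[1]=0$. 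Using $[\delta_0(F_m(x))]_0=\frac1{\sqrt{2\pi}}$, $[\delta_0(F_m(x))]_2=-\frac1{2\sqrt{2\pi}}(F_m(x)^2-1)$ and that $[\,|\nabla F_m(x)|\,]_0=\E[|\nabla F_m(0)|]$ is $x$-independent, this leaves
\[
\calV_m[2]=\frac1{\sqrt{2\pi}}\int_{\T^d}\big[\,|\nabla F_m(x)|\,\big]_2\,dx-\frac{\E[|\nabla F_m(0)|]}{2\sqrt{2\pi}}\int_{\T^d}\big(F_m(x)^2-1\big)\,dx .
\]

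The two Gaussian inputs are computed from the identity $|v|=\frac1{2\sqrt\pi}\int_0^\infty(1-e^{-t|v|^2})\,t^{-3/2}\,dt$. For a centred Gaussian $V\sim\calN(0,\Sigma)$ in $\R^d$ one has $\E[e^{-t|V|^2}]=\det(I+2t\Sigma)^{-1/2}$; with $\Sigma=L_m\Omega_m$ and the substitution $s=2tL_m$ this gives $\E[|\nabla F_m(0)|]=\sqrt{L_m/2\pi}\,\Upsilon(\Omega_m)=\sqrt{2\pi}\,\E[\calV_m]$ (the computation behind Proposition \ref{prop:expectation}). For the degree-two part, write $V=\Sigma^{1/2}Z$ with $Z\sim\calN(0,I)$, diagonalise $\Sigma$, and apply $e^{-sw^2}=(1+2s)^{-1/2}\sum_n\frac{(-s)^n}{n!(1+2s)^n}He_{2n}(w)$ factor by factor; the degree-two part of $e^{-t\langle Z,\Sigma Z\rangle}$ comes out to $-t\det(I+2t\Sigma)^{-1/2}(\langle V,(I+2t\Sigma)^{-1}V\rangle-\tr(\Sigma(I+2t\Sigma)^{-1}))$, and integrating against $t^{-3/2}$ (again substituting $s=2tL_m$) produces exactly the matrix $\Psi(\Omega_m)$ of \eqref{eq:upsilon_psi_definition}, so that
\[
\big[\,|\nabla F_m(x)|\,\big]_2=\frac1{2\sqrt{2\pi L_m}}\Big(\langle\nabla F_m(x),\Psi(\Omega_m)\nabla F_m(x)\rangle-L_m\,\tr\big(\Omega_m\Psi(\Omega_m)\big)\Big).
\]
At this point I would invoke the identity $\tr(\Omega\Psi(\Omega))=\Upsilon(\Omega)$, obtained by integrating by parts in \eqref{eq:upsilon_psi_definition}; since $\Omega_m=\frac1{\calN_m}\sum_{\lambda\in\Lambda_m}\lambda\lambda^{\top}$ this is the same as $\frac1{\calN_m}\sum_{\lambda\in\Lambda_m}\Xi_m(\lambda)=1$, which is what produces the ``$-1$'' in $\Xi_m(\lambda)-1$ below.

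It remains to integrate over $\T^d$. As $m^{1/2k}\lambda\in\Z^d$ for $\lambda\in\Lambda_m$, $\int_{\T^d}e(m^{1/2k}\langle\lambda+\mu,x\rangle)\,dx=\mathbf 1[\mu=-\lambda]$, so with $\zeta_{-\lambda}=\overline{\zeta_\lambda}$ and $(2\pi m^{1/2k})^2=L_m$ one gets $\int_{\T^d}F_m(x)^2\,dx=\frac1{\calN_m}\sum_\lambda|\zeta_\lambda|^2$ and $\int_{\T^d}\langle\nabla F_m(x),A\nabla F_m(x)\rangle\,dx=\frac{L_m}{\calN_m}\sum_\lambda|\zeta_\lambda|^2\langle\lambda,A\lambda\rangle$ for symmetric $A$. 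Taking $A=\Psi(\Omega_m)$, using $\langle\lambda,\Psi(\Omega_m)\lambda\rangle=\Upsilon(\Omega_m)\Xi_m(\lambda)$ and $\frac1{\calN_m}\sum_\lambda\Xi_m(\lambda)=1$ to turn $|\zeta_\lambda|^2$ into $|\zeta_\lambda|^2-1$ in both integrals, the two contributions in the display for $\calV_m[2]$ combine into a single expression of the shape $(\text{const})\times\frac1{\calN_m}\sum_{\lambda\in\Lambda_m}(\Xi_m(\lambda)-1)(|\zeta_\lambda|^2-1)$; tracking the constant through the substitutions identifies it with $\sqrt{L_m/2\pi}\,\Upsilon(\Omega_m)$, which is the first claim. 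For the variance, set $g_\lambda:=|\zeta_\lambda|^2-1$; then $g_{-\lambda}=g_\lambda$, $\E[g_\lambda]=0$, $\mathrm{Var}(|\zeta_\lambda|^2)=1$, $\E[g_\lambda g_\mu]=\mathbf 1[\mu\in\{\lambda,-\lambda\}]$ for $\lambda\neq0$, and $\Xi_m(-\lambda)=\Xi_m(\lambda)$; expanding $\E[\calV_m[2]^2]$ over pairs $(\lambda,\mu)$, the off-diagonal terms vanish and the $\pm\lambda$ terms pair up, yielding $\mathrm{Var}(\calV_m[2])=\frac{\pi}{\calN_m}\E[\calV_m]^2\frac1{\calN_m}\sum_{\lambda\in\Lambda_m}(1-\Xi_m(\lambda))^2$, which matches (as it must) the leading term of Proposition \ref{prop:variance}.

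The step I expect to be the main obstacle is the explicit evaluation of $[\,|\nabla F_m(x)|\,]_2$ and its identification with $\Psi(\Omega_m)$, together with the identity $\tr(\Omega\Psi(\Omega))=\Upsilon(\Omega)$: these rest on the Hermite expansion of a Gaussian quadratic exponential and on Beta-type integral evaluations, and they are precisely what force the ``$-1$'' shifts in $\Xi_m(\lambda)-1$ and in $|\zeta_\lambda|^2-1$ to carry matching coefficients, so that the two chaos contributions collapse to the single clean term in the statement. The remaining steps are routine: orthogonality of characters on $\T^d$ and the elementary second moments of $|\zeta_\lambda|^2$.
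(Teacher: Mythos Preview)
Your proof is correct and follows essentially the same approach as the paper: both expand the Kac--Rice integrand $\delta_0(F_m(x))|\nabla F_m(x)|$ into Wiener chaos, compute the relevant Hermite coefficients via Berry's identity \eqref{eq:berry_s_method}, and then integrate over $\T^d$ using orthogonality of characters. The only organisational difference is that the paper first diagonalises $\Omega_m$ (choosing an orthonormal eigenbasis $v_1,\dots,v_d$ and working with the scalar coefficients $\alpha_q(P\Omega_mP^{-1})$ of Lemmas \ref{lemma:computing_chaos} and \ref{lemma:volume_chaos}), whereas you compute $[\,|\nabla F_m(x)|\,]_2$ coordinate-free as a quadratic form in $\nabla F_m(x)$ with matrix $\Psi(\Omega_m)$; the two are equivalent once one observes that $\Psi(\Omega_m)$ commutes with $\Omega_m$. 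The identity $\tr(\Omega\Psi(\Omega))=\Upsilon(\Omega)$, which you obtain by integration by parts and which forces the ``$-1$'' shifts to match, is used by the paper as well, though there it is invoked indirectly via the remark that $\calV_m[2]$ is centred.
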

Proposition \ref{prop:second_chaos} is proved in section \ref{s:chaos}.\\

We now prove Propositions \ref{prop:variance_upper_bound} and \ref{prop:varcur}, and Theorems \ref{thm:variance} and \ref{thm:clt} using the two above propositions as well as the results from section \ref{s:arithmetic}.
\begin{proof}[Proof of Propositions \ref{prop:variance_upper_bound} and \ref{prop:varcur}, and of Theorems \ref{thm:variance} and \ref{thm:clt}]
Let us first assume that Assumptions \ref{as:ample} and \ref{as:nd_eigvals} are satisfied. Throughout the proof, we will use that, by Assumption \ref{as:nd_eigvals} and as noted below Definition \ref{def:omega}, the sequence $(\Omega_m)_{m\in\N}$ takes values in a compact set of the space of positive definite matrices so that the quantities $\Upsilon(\Omega_m)$ and the eigenvalues of the matrices $\Psi(\Omega_m)$ are bounded from above and below by positive constants uniformly in $m$.\\

By Proposition \ref{prop:expectation}, for $m$ large enough,
\begin{equation}\label{eq:main_proof_1}
\E[\calV_m]=\frac{\sqrt{L_m}}{2\pi}\Upsilon(\Omega_m)\asymp\sqrt{L_m}\, .
\end{equation}
Hence, by Proposition \ref{prop:variance},
\[
\frac{\textup{Var}(\calV_m)}{\E[\calV_m]^2}=O\left(\frac{1}{\calN_m}\max_{\lambda\in\Lambda_m}(1-\Xi_m(\lambda))^2\right)+O\left(\frac{|\calC_{4;m}|}{\calN_m^4}\right)\, .
\]
But by definition of $\Xi_m$, for each $\lambda\in\Lambda_m\subset\Sigma_p$,
\begin{equation}\label{eq:main_proof_5}
(1-\Xi_m(\lambda))^2\ll_p 1+\Xi_m(\lambda)^2\ll_p 1+|\lambda|^2\ll_p 1\, .
\end{equation}
Moreover, by construction, $|\calC_{4;m}|\leq \calN_m^3$. Hence,
\[
\frac{\textup{Var}(\calV_m)}{\E[\calV_m]^2}=O\left(\frac{1}{\calN_m}\right)\, .
\]
This concludes the proof of Proposition \ref{prop:variance_upper_bound}.\\

Next, we apply Propositions \ref{prop:variance} and \ref{prop:second_chaos} to deduce that
\begin{equation}\label{eq:main_proof_4}
\textup{Var}(\calV_m)=\frac{\pi}{\calN_m}\left(\E[\calV_m]\right)^2\frac{1}{\calN_m}\sum_{\lambda\in\Lambda_m}(1-\Xi_m(\lambda))^2+O\left(\frac{L_m|\calC_{4;m}|}{\calN_m^4}\right)=\textup{Var}(\calV_m[2])+O\left(\frac{L_m|\calC_{4;m}|}{\calN_m^4}\right)\, .
\end{equation}
An application of Proposition \ref{prop:four_correlations_dim_2_and_3} for $d=2$ now concludes the proof of Proposition \ref{prop:varcur}.

Moving on to the proof of Theorems \ref{thm:variance} and \ref{thm:clt}, let us assume from this point on that we are in case \textit{(A)} of Theorem \ref{thm:variance}. The proof can be easily adapted to the two other cases after a few changes which we present below.\\

Let $d\sigma_p$ be the measure from Definition \ref{def:equidistribution_measure}. Recall that $\Omega_m=\frac{1}{\calN_m}\sum_{\lambda\in\Lambda_m}\lambda\otimes\lambda^*$. Let $\mathfrak{S}\subset\N$ be the intersection of the sequences $\mathfrak{S}$ from Lemma \ref{lemma:lattice_points_2D} (say with $\eps=\frac{1}{4}$) and Proposition \ref{prop:equidistribution for ellipses}, which both apply since we are in case \textit{(A)}. In particular, Assumption \ref{as:ample} holds by Lemma \ref{lemma:lattice_points_2D}. We have, for each $\eps>0$, uniformly for $m\in\mathfrak{S}$, $\Omega_m=\Omega+O\left(\log(m)^{-\frac{1}{2}\log\left(\frac{\pi}{2}\right)+\eps}\right)$ where
\begin{equation}\label{eq:main_proof_2}
\Omega=\int_{\Sigma_p}x\otimes x^* d\sigma_p(x)
\end{equation}
Since, $d\sigma_p$ and the surface area measure on $\Sigma_p$ are mutually absolutely continuous, we deduce that $\Omega$ is positive definite and so Assumption \ref{as:nd_eigvals} also holds. Applying Proposition \ref{prop:equidistribution for ellipses} once again, uniformly for $m\in\mathfrak{S}$,
\begin{equation}\label{eq:main_proof_3}
\frac{1}{\calN_m}\sum_{\lambda\in\Lambda_m}(1-\Xi_m(\lambda))^2=\int_{\Sigma_p}(1-\Xi(x))^2d\sigma_p(x)+O\left(\log(m)^{-\frac{1}{2}\log\left(\frac{\pi}{2}\right)+\eps}\right)
\end{equation}
where
\[
\Xi(\lambda)=\Upsilon(\Omega)^{-1}\langle\lambda,\Psi(\Omega)\lambda\rangle\, .
\]
All in all, plugging this estimate into \eqref{eq:main_proof_4}, and using \eqref{eq:main_proof_1}, we deduce that, uniformly for $m\in\mathfrak{S}$,
\[
\frac{\textup{Var}(\calV_m)}{\E[\calN_m]^2}=\frac{\pi}{\calN_m}\times\int_{\Sigma_p}(1-\Xi(x))^2d\sigma_p(x)+O\left(\calN_m^{-1}\log(m)^{-\frac{1}{2}\log\left(\frac{\pi}{2}\right)+\eps}\right)+O\left(\frac{|\calC_{4;m}|}{\calN_m^4}\right)\, .
\]
By Proposition \ref{prop:four_correlations_dim_2_and_3}, we deduce that, uniformly for $m\in\mathfrak{S}$,
\[
\frac{\textup{Var}(\calV_m)}{\E[\calN_m]^2}=\frac{\pi}{\calN_m}\times\int_{\Sigma_p}(1-\Xi(x))^2d\sigma_p(x)+O\left(\calN_m^{-1}\log(m)^{-\frac{1}{2}\log\left(\frac{\pi}{2}\right)+\eps}\right)+O\left(\calN_m^{-2}\right)\, .
\]
This concludes the proof of case \textit{(A)} of Theorem \ref{thm:variance}. Still in this case, for Theorem \ref{thm:clt}, we use the second equality of \eqref{eq:main_proof_4} and Proposition \ref{prop:four_correlations_dim_2_and_3} once again to deduce that
\begin{equation}\label{eq:main_proof_6}
\frac{\textup{Var}(\calV_m)}{\E[\calN_m]^2}=\frac{\textup{Var}(\calV_m[2])}{\E[\calN_m]^2}+O\left(\calN_m^{-1}\log(m)^{-\frac{1}{2}\log\left(\frac{\pi}{2}\right)+\eps}\right)+O\left(\calN_m^{-2}\right)\, .
\end{equation}
But, by Proposition \ref{prop:second_chaos}, $\mathcal{V}_m[2]$ is a deterministic multiple of
\[
\sum_{\lambda\in\Lambda_m}c_m(\lambda)(|\zeta_\lambda|^2-1)
\]
where, $(\zeta_\lambda)_{\lambda}$ are complex standard normals which are mutually independent save for the relation $\zeta_{-\lambda}=\overline{\zeta}_\lambda$ and for each $\lambda$,
\[
c_m(\lambda):=\frac{\Xi_m(\lambda)-1}{\sqrt{\sum_{\lambda\in\Lambda_m}(\Xi_m(\lambda)-1)^2}}\, .
\]
The coefficients $c_m(\lambda)$ satisfy $\sum_\lambda c_m(\lambda)^2=1$ by construction. Moreover, by Proposition \ref{prop:equidistribution for ellipses} and \eqref{eq:main_proof_5}, their supremum is $O(\calN_m^{-1})$ which goes to zero by Lemma \ref{lemma:lattice_points_2D}. We conclude by applying Lindenberg principle (see for instance Remark 11.1.2 of \cite{noupec}) that $\frac{\calV_m[2]}{\sqrt{\textup{Var}(\calV_m[2])}}$ converges in law to a standard normal as $m\to+\infty$. By \eqref{eq:main_proof_6} and the last point of Proposition \ref{prop:upper_bd_ellipses}, this also holds for $\frac{\calV_m-\E[\calV_m]}{\sqrt{\textup{Var}(\calV_m)}}$.\\

This concludes the proof in case \textit{(A)} of Theorem \ref{thm:variance}. In case \textit{(B)} (resp. \textit{(C)}), the sequence $\mathfrak{S}$ is the one defined in Theorem \ref{thm:arithmetic_dimension_3} (resp. Proposition \ref{prop:lattice_points_hd}). Proposition \ref{prop:equidistribution for ellipses} is replaced by Theorem \ref{thm:arithmetic_dimension_3} (resp. Proposition \ref{prop:equidistribution_hd}) and the error term $O\left(\log(m)^{-\log\left(\frac{\pi}{2}\right)+\eps}\right)$ is thus replaced by $\calN_m^{-\frac{1}{111}+\eps}$ (resp. $\calN_m^{-\delta}$ for some $\delta=\delta(p)>0$). Here, for case \textit{(C)}, it should be noted that condition \eqref{eq:non_degenerate_polynomial} implies the assumption of Proposition \ref{prop:equidistribution_hd}, since $d'\leq \overline{d}$. Finally, Proposition \ref{prop:four_correlations_dim_2_and_3} applies in case \textit{(B)} yielding an error term $\calN_m^{-2+\eps}$ instead of $\calN_m^{-2}$ and should be replaced by Theorem \ref{thm:correlations} in case \textit{(C)}, which yields an error of order $\calN_m^{-\frac{d}{2k}}$. In order to ensure that the error terms go to zero, we need $\calN_m$ to go to infinity along $\mathfrak{S}$. This holds in case \textit{(B)} by the choice of $\mathfrak{S}$ from Theorem \ref{thm:arithmetic_dimension_3} and in case \textit{(C)} by Proposition \ref{prop:lattice_points_hd}. Moreover, these two results also yield the error estimates of Theorem \ref{thm:variance} in terms of powers of $m$ in cases \textit{(B)} and \textit{(C)}.

\end{proof}

\section{Expectation and variance asymptotics: proofs of Propositions \ref{prop:expectation} and \ref{prop:variance}}\label{s:covariance_proofs}

In this section, we prove Propositions \ref{prop:expectation} and \ref{prop:variance}. The proofs are structured as follows. In subsection \ref{ss:integral_expressions}, we prove Proposition \ref{prop:expectation} and establish an integral expression for the variance of the nodal set (see \eqref{eq:integral_for_second_moment}). In subsection \ref{ss:product_of_two_norms}, we derive a general expansion for the expectation of the product of the norms of two almost-independent components of a Gaussian vector. The goal is to apply this expansion to the integrand in \eqref{eq:integral_for_second_moment}. In subsections \ref{ss:singular_and_non_singular_sets} and \ref{ss:integrals_in_expansion}, we derive explicit expressions fo the integrals of the corresponding terms. We then conclude in subsection \ref{ss:variance_conclusion}. Finally, in subsections \ref{ss:variance_aux_1}, \ref{ss:contribution_of_the_singular_set} and \ref{ss:variance_aux_3}, we prove a series of auxiliary lemmas used in the previous subsections.

\subsection{Explicit expressions for expectation and variance}\label{ss:integral_expressions}
We begin with the proof of the expectation estimate Proposition \ref{prop:expectation}. In this proof we will employ an elegant method of Berry to deal with these and similar computations. In particular, we will apply the following identity. As observed in \cite[(24)]{berry2} (also see \cite[section 5]{krkuwi})
\begin{equation}
\label{eq:berry_s_method}
\sqrt{z}=\frac{1}{\sqrt{2\pi}}\int_{0}^{\infty}\left(1-\exp(-zt/2)\right)\frac{dt}{t^{3/2}}, \qquad z>0.
\end{equation}
Alternatively, one could proceed with a brute force Taylor expansion, but this calculation would be considerably longer. Recall that $r_m$ is the covariance matrix of $F_m$ and recall $L_m$ from \eqref{eq:l_m_def} as well as $\Omega_m$ and $\tilde{\Omega}_m$ from Definition \ref{def:omega}.
\begin{proof}[Proof of Proposition \ref{prop:expectation}]
Assume first that $\Omega_m$ is non-degenerate. Then, by stationarity, for each $x\in\T^d$, the random vector $(F_m(x),\nabla F_m(x))$ is non-degenerate. By the Kac-Rice formula (see Theorem 6.8 of \cite{azawsc}), using stationarity once again,
\begin{equation}
\label{be1}
\mathbb{E}[\mathcal{V}_m]=\frac{\sqrt{L_m}}{\sqrt{2\pi}}\cdot\mathbb{E}\left[\left|\frac{\nabla F_m(0)}{\sqrt{L_m}}\right|\right]\, .
\end{equation}
Applying \eqref{be1} with $z=|\frac{\nabla F_m(0)}{\sqrt{L_m}}|^2$ into \eqref{be1},
\begin{equation*}
\mathbb{E}[\mathcal{V}_m]=\frac{\sqrt{L_m}}{2\pi}\int_{0}^{\infty}\left(1-\mathbb{E}\left[\exp\left(-\frac{t}{2}\left|\frac{\nabla F}{\sqrt{L_m}}(0)\right|^2\right)\right]\right)\frac{dt}{t^{3/2}}.
\end{equation*}
By definition of expectation, for each $t$ we have
\begin{align*}
\mathbb{E}&\left[\exp\left(-\frac{t}{2}\left|\frac{\nabla F_m(0)}{\sqrt{L_m}}\right|^2\right)\right]=\frac{1}{\sqrt{(2\pi)^d\det(\Omega_m)}}\int_{\mathbb{R}^{d}}e^{-t|w|^2/2}
e^{-w^T\Omega_m^{-1}w/2}dw\\
&=(\det(\Omega_m)\det(tI_d+\Omega_m^{-1}))^{-1/2}=\det(I_d+t\Omega_m)^{-1/2}
\end{align*}
so that
\begin{equation*}
\mathbb{E}[\mathcal{V}_m]=\frac{\sqrt{L_m}}{2\pi}\int_{0}^{\infty}\left(1-\frac{1}{\sqrt{\det(t\Omega_m+I_d)}}\right)\frac{dt}{t^{3/2}}=\frac{\sqrt{L_m}}{2\pi}\Upsilon(\Omega_m)\, .
\end{equation*}
This completes the proof in the non-degenerate case. To extend it to the general case, note that $\Upsilon(\cdot)$ extends by continuity to symmetric matrices with non-negative eigenvalues. Thus, by adding a independent non-degenerate field multiplied by a small parameter $\eps$, we may perturb it in such a way that the covariance matrix of its gradient at any point converges to the original one. It remains to show that the volume of the perturbed field converges to the volume of the original field. We distinguish two cases. Either the gradient is a.s. zero and $\Omega_m=0$ in which case equality holds, or $\Omega_m$ is non-zero and the gradient is a.s. non-zero. By stationarity, we deduce that there exists a unit vector $v$ such that $(F_m(0),\langle\nabla F_m(0),v\rangle)$ is non-degenerate and so by Bulinskaya's lemma (see for instance Proposition 6.11 of \cite{azawsc}), $\nabla F_m$ does not vanish on the nodal set of $F_m$. Hence, as $\eps\to 0$, the length of the nodal set of the perturbation converges to that of $F_m$.
\end{proof}
Since $F_m$ is stationary, for each $x\in\T^d$, $x\neq 0$, the following expression does not depend on the choice of $y\in\T^d$:
\begin{equation}
\label{K2tdef}
\tilde{K}_{2;m}(x)
=\phi_{F_m(y),F_m(x+y)}(0,0)\cdot\mathbb{E}[|\nabla F_m(y)||\nabla F_m(x+y)|\ \big| \ F_m(y)=F_m(x+y)=0],
\end{equation}
Here, $\phi_{F_m(y),F_m(x+y)}$ denotes the density with respect to Lebesgue of the law of the random vector $(F_m(y),F_m(x+y))$. From the regression formula (see Proposition 1.2 of \cite{azawsc}) it is easy to see that $\tilde{K}_{2;m}(x)$ extends by continuity to $x=0$. By the Kac-Rice formula (see Theorem  6.9 of \cite{azawsc}), we have
\begin{equation}\label{eq:integral_for_second_moment}
\E\brb{\calV_m^2}=\int_{\T^d}\tilde{K}_{2;m}(x)dx\, .
\end{equation}
Thus, the study of variance asymptotics reduces to the study of $\tilde{K}_{2;m}$. It will be convenient to rescale $\tilde{K}_{2;m}$ by the parameter
\begin{equation}
L_m=4\pi^2m^{1/k}
\end{equation}
so we define
\begin{equation}
\label{eq:K2_def}
K_{2;m}(x):=\frac{\tilde{K}_{2;m}(x)}{L_m}\, .
\end{equation}
Let $\nabla r_m (x)$ be the column vector $(\partial_j r_m(x))_j$, and $\nabla^2 r_m(x)$ the Hessian matrix $(\partial_j\partial_l r_m(x))_{j,l}$.
\begin{prop}
\label{prop:second_intensity}
The scaled second intensity $K_{2;m}$ may be expressed in terms of the covariance function and its various first and second order derivatives as follows:
\begin{equation*}
K_{2;m}(x)=
\frac{1}{2\pi\sqrt{1-r_m^2(x)}}\mathbb{E}[|w_1||w_2|],
\end{equation*}
where $(w_1,w_2)\sim N(0,\Theta_m(x))$,
\begin{equation}
\label{eq:Theta_def}
\Theta_m(x)=\begin{pmatrix}
\Omega_m & 0 \\ 0 & \Omega_m
\end{pmatrix}
+
\begin{pmatrix}
U_m(x) & V_m(x) \\ V_m(x) & U_m(x)
\end{pmatrix},
\end{equation}
with
\begin{equation}
\label{eq:U_def}
U_m=-\frac{1}{L_m}\cdot\frac{1}{1-r_m^2}\nabla r_m\nabla r_m^T
\end{equation}
and
\begin{equation}
\label{eq:V_def}
V_m=-\frac{1}{L_m}
\left(
\nabla^2 r_m
+\frac{r_m}{1-r_m^2}
\nabla r_m\nabla r_m^T
\right).
\end{equation}
\end{prop}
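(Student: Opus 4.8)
The plan is to start from the defining expression \eqref{K2tdef} for $\tilde K_{2;m}(x)$ and apply the Gaussian regression (conditioning) formula to the jointly Gaussian vector $(F_m(y),F_m(x+y),\nabla F_m(y),\nabla F_m(x+y))$. By stationarity I will set $y=0$ throughout. First I would record the covariance structure: since $r_m$ is the covariance function, $\E[F_m(0)F_m(x)]=r_m(x)$, $\E[F_m(0)^2]=r_m(0)=1$; the cross-covariances between values and gradients are $\E[F_m(0)\nabla F_m(0)]=\nabla r_m(0)=0$ (by stationarity, $r_m$ is even), $\E[F_m(0)\nabla F_m(x)]=\nabla r_m(x)$, $\E[F_m(x+y)\nabla F_m(y)]$-type terms give $-\nabla r_m(x)$ with the appropriate sign bookkeeping; and the gradient-gradient blocks are $\E[\nabla F_m(0)\nabla F_m(0)^T]=\tilde\Omega_m=L_m\Omega_m$ and $\E[\nabla F_m(0)\nabla F_m(x)^T]=-\nabla^2 r_m(x)$ (the minus sign coming from differentiating $r_m(x-y)$ twice, once in each slot). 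The scalar density prefactor is immediate: the covariance matrix of $(F_m(0),F_m(x))$ is $\left(\begin{smallmatrix}1 & r_m(x)\\ r_m(x) & 1\end{smallmatrix}\right)$, whose determinant is $1-r_m(x)^2$, so $\phi_{F_m(0),F_m(x)}(0,0)=\frac{1}{2\pi\sqrt{1-r_m(x)^2}}$, which already accounts for the $\frac{1}{2\pi\sqrt{1-r_m^2(x)}}$ factor in the claimed formula.

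Next I would compute the conditional law of $(\nabla F_m(0),\nabla F_m(x))$ given $F_m(0)=F_m(x)=0$. The regression formula says this is Gaussian with mean zero (the conditioning values are zero and everything is centered) and covariance equal to the Schur complement: the unconditional $2d\times 2d$ gradient covariance minus (gradient-value covariance)$\cdot$(value-value covariance)$^{-1}\cdot$(value-gradient covariance). The unconditional gradient block is $\left(\begin{smallmatrix}\tilde\Omega_m & -\nabla^2 r_m(x)\\ -\nabla^2 r_m(x) & \tilde\Omega_m\end{smallmatrix}\right)$. The gradient-value cross block is, up to signs, $\left(\begin{smallmatrix}0 & \nabla r_m(x)\\ \nabla r_m(x) & 0\end{smallmatrix}\right)$ (using $\nabla r_m(0)=0$). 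Inverting $\left(\begin{smallmatrix}1 & r_m\\ r_m & 1\end{smallmatrix}\right)$ gives $\frac{1}{1-r_m^2}\left(\begin{smallmatrix}1 & -r_m\\ -r_m & 1\end{smallmatrix}\right)$. Carrying out the block multiplication produces the correction term: the diagonal blocks pick up $-\frac{1}{1-r_m^2}\nabla r_m\nabla r_m^T$ and the off-diagonal blocks pick up $-\frac{r_m}{1-r_m^2}\nabla r_m\nabla r_m^T$; combining with the $-\nabla^2 r_m$ already present off-diagonal gives exactly $\tilde V_m:=-(\nabla^2 r_m + \frac{r_m}{1-r_m^2}\nabla r_m\nabla r_m^T)$ on the off-diagonal and $\tilde U_m := -\frac{1}{1-r_m^2}\nabla r_m\nabla r_m^T$ added to $\tilde\Omega_m$ on the diagonal. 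Finally, dividing through by $L_m$ (i.e. passing from $\nabla F_m$ to $\frac{1}{\sqrt{L_m}}\nabla F_m$, whose squared norm enters $K_{2;m}=\tilde K_{2;m}/L_m$) turns $\tilde\Omega_m$ into $\Omega_m$, $\tilde U_m$ into $U_m$ as in \eqref{eq:U_def}, and $\tilde V_m$ into $V_m$ as in \eqref{eq:V_def}. Writing $(w_1,w_2)$ for the rescaled conditional gradient vector, which is $N(0,\Theta_m(x))$ with $\Theta_m$ as in \eqref{eq:Theta_def}, we get $\E[|\nabla F_m(0)|\,|\nabla F_m(x)|\mid F_m(0)=F_m(x)=0]=L_m\,\E[|w_1|\,|w_2|]$, and multiplying by the density prefactor and dividing by $L_m$ yields the claimed identity.

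The one point requiring genuine care — the main obstacle — is the sign bookkeeping in the mixed value–gradient and gradient–gradient covariances. Because $F_m$ is stationary one writes the covariance as a function of the difference $x-y$, and differentiating with respect to $y$ introduces a sign flip relative to differentiating with respect to $x$; getting these consistent (so that $\E[F_m(0)\nabla F_m(x)] = +\nabla r_m(x)$ while the gradient–gradient cross term is $-\nabla^2 r_m(x)$, matching the signs that make $U_m$, $V_m$ come out as in \eqref{eq:U_def}--\eqref{eq:V_def}) is where a careless computation would go wrong. I would handle this by fixing the convention $r_m(z)=\E[F_m(w)F_m(w+z)]$ once and for all, computing $\partial_{z_j} r_m$, $\partial_{z_j}\partial_{z_l} r_m$ explicitly, and then substituting $z=x+y-y=x$ or $z=y-(x+y)=-x$ as appropriate, using evenness of $r_m$ to simplify. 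The degenerate case (where $\Omega_m$, hence $\Theta_m(x)$, may fail to be positive definite for some $x$) is dealt with exactly as in the proof of Proposition \ref{prop:expectation}: the expression $\E[|w_1||w_2|]$ for $(w_1,w_2)\sim N(0,\Theta_m(x))$ is continuous in $\Theta_m(x)$, and a perturbation argument extends the identity from the non-degenerate case, using Bulinskaya's lemma to ensure the nodal volumes converge.
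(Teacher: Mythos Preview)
Your proposal is correct and follows essentially the same route as the paper's proof: write out the block covariance of $(F_m(0),F_m(x),\nabla F_m(0),\nabla F_m(x))$, take the Schur complement $\Sigma_{22}-\Sigma_{21}\Sigma_{11}^{-1}\Sigma_{12}$ to obtain the conditional covariance, and rescale by $L_m$. The paper is terser---it cites \cite[Proposition~4.2]{benmaf} and \cite[\S3]{krkuwi} and simply records the blocks $\Sigma_{11},\Sigma_{12},\Sigma_{22}$---whereas you spell out the Schur complement multiplication and the sign bookkeeping explicitly; your added remark on the degenerate case via continuity and Bulinskaya's lemma is a reasonable precaution that the paper handles implicitly (cf.\ the continuity remark just above the statement of the proposition).
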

\begin{proof}
Proceeding as in \cite[Proposition 4.2]{benmaf} and \cite[section 3]{krkuwi}, one writes the covariance matrix of the $2d+2$-dimensional Gaussian vector
\begin{equation*}
\left(
F_m(0),F_m(x),
\nabla F_m(0),
\nabla F_m(x)
\right)
\end{equation*}
as
$
\begin{pmatrix}
\Sigma_{11} & \Sigma_{12} \\ \Sigma_{21} & \Sigma_{22}
\end{pmatrix},
$ 
with
\begin{equation*}
\Sigma_{11}=
\begin{pmatrix}
1 & r_m \\r_m & 1
\end{pmatrix},
\qquad
\Sigma_{12}=\Sigma_{21}=
\begin{pmatrix}
0 & \nabla r_m(x)\\-\nabla r_m(x) & 0
\end{pmatrix},
\qquad
\Sigma_{22}=
\begin{pmatrix}
\tilde{\Omega}_m & -\nabla^2 r_m(x)
\\
-\nabla^2 r_m(x) & \tilde{\Omega}_m
\end{pmatrix}.
\end{equation*}
Therefore,
\begin{equation*}
\tilde{K}_{2;m}(x)=\frac{1}{2\pi\sqrt{1-r_m^2(x)}}\cdot\mathbb{E}[|v_1||v_2|],
\qquad
(v_1,v_2)\sim N(0,\tilde{\Theta}),
\end{equation*}
with $\tilde{\Theta}=\Sigma_{22}-\Sigma_{21}\Sigma_{11}^{-1}\Sigma_{12}$. Rescaling by $L_m$ completes the proof.
\end{proof}
\subsection{Expectation of product of two norms}\label{ss:product_of_two_norms}
The main tool we use to compute \eqref{eq:integral_for_second_moment} is the following expansion for the expectation of the product of the norms of two almost independent Gaussian vectors. The case where $\Omega=I_d$ has been used in \cite{berry2,krkuwi}. To the best of our knowledge, this is the first time it has been generalised to the non-isotropic case (i.e. for any $\Omega$).
\begin{lemma}
\label{lemma:berry_s_method}
Let $\Omega$ be a $d\times d$ positive definite symmetric matrix. Let $\Theta$ be a $2d\times 2d$ positive definite symmetric matrix of the form
\[
\Theta=\br{\begin{matrix}
\Omega & 0\\
0 & \Omega
\end{matrix}}+\br{\begin{matrix}
U & V\\
V & U
\end{matrix}}\, .
\]
Let $(v_1,v_2)\sim N(0,\Theta)$. Assume that there exists $C_1<+\infty$ such that $\|U\|_\infty\leq C_1$ and $\|V\|_\infty\leq C_1$. Let
\begin{equation}
\Upsilon(\Omega):=\int_{0}^{\infty}\left(1-\frac{1}{\sqrt{\det(I_d+t\Omega)}}\right)\frac{dt}{t^{3/2}}\text{ and }\Psi(\Omega):=\int_0^\infty t^{-1/2}\det(I_d+t\Omega)^{-1/2}(I_d+t\Omega)^{-1}dt\, .
\end{equation}
Then,
\begin{equation*}
\mathbb{E}[|v_1||v_2|]
=
\Upsilon(\Omega)^2+
\Upsilon(\Omega)\tr(U\Psi(\Omega))+
\frac{1}{2}\tr(V\Psi(\Omega)V\Psi(\Omega))
+
O(\|U\|_\infty^2+\|V\|_\infty^4)
\end{equation*}
where the constant implied by $O(\cdot)$ may depend on $\Omega$ and $C_1$.
\end{lemma}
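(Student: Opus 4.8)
The plan is to use Berry's identity \eqref{eq:berry_s_method} on each of the two norms, obtaining a double integral over parameters $s,t>0$ of the quantity $1 - \mathbb{E}[\exp(-s|v_1|^2/2 - t|v_2|^2/2)]$ weighted by $(st)^{-3/2}$. Since $(v_1,v_2)$ is a centred Gaussian with covariance $\Theta$, the inner expectation is an explicit Gaussian integral equal to $\det(I_{2d} + D_{s,t}\Theta)^{-1/2}$ where $D_{s,t} = \mathrm{diag}(sI_d, tI_d)$. Writing $\Theta = \Theta_0 + R$ with $\Theta_0 = \mathrm{diag}(\Omega,\Omega)$ and $R = \left(\begin{smallmatrix} U & V \\ V & U\end{smallmatrix}\right)$, I would factor out the unperturbed part: the determinant is
\[
\det(I_{2d}+D_{s,t}\Theta_0)^{-1/2}\det\!\big(I_{2d} + (I_{2d}+D_{s,t}\Theta_0)^{-1}D_{s,t}R\big)^{-1/2}.
\]
The first factor is $\det(I_d+s\Omega)^{-1/2}\det(I_d+t\Omega)^{-1/2}$, which is exactly what produces the $\Upsilon(\Omega)^2$ term after integrating $(1 - \text{this})\cdot(st)^{-3/2}$ (one has to be slightly careful: one subtracts and adds the right cross terms so that each one-dimensional integral converges at $0$ and $\infty$; this is the standard manipulation behind \eqref{eq:berry_s_method}).

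Next I would Taylor-expand the second determinant factor in powers of $R$, i.e.\ in powers of $U$ and $V$. Using $\det(I+A)^{-1/2} = 1 - \tfrac12\tr A + \tfrac18(\tr A)^2 + \tfrac14 \tr(A^2) + O(\|A\|^3)$ with $A = (I_{2d}+D_{s,t}\Theta_0)^{-1}D_{s,t}R$, and noting the block structure of $A$, one reads off: the linear-in-$U$ term gives $\tr(U\, G_s) + \tr(U\, G_t)$ where $G_s = (I_d+s\Omega)^{-1}\cdot s\cdot(\text{appropriate factor})$; integrating the combination against the right weights and recombining with the $\Upsilon$-integral produces $\Upsilon(\Omega)\tr(U\Psi(\Omega))$, using the definition of $\Psi$. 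Crucially there is no linear-in-$V$ term because $V$ sits only in the off-diagonal blocks of $R$, hence on the diagonal blocks of $A^2$ but not of $A$, so $\tr A$ picks up no $V$. The quadratic terms: $\tr(A^2)$ contributes a piece $\sim \tr(V\cdot(\cdots)\cdot V\cdot(\cdots))$ from the off-diagonal-squared blocks, which after the $s,t$ integration becomes $\tfrac12\tr(V\Psi(\Omega)V\Psi(\Omega))$; the $U^2$ and $V^2$-diagonal pieces, together with $(\tr A)^2$ (which is $O(\|U\|^2)$), are absorbed into the error $O(\|U\|_\infty^2 + \|V\|_\infty^4)$. Here one needs that the quadratic-in-$V$ \emph{diagonal} contribution, which would naively be $O(\|V\|^2)$, actually cancels or can be folded into the main $V$-term; I expect it does combine exactly into $\tfrac12\tr(V\Psi V\Psi)$ so that the only genuine $V$ error is of order $\|V\|^4$ coming from the cubic-and-higher remainder — this bookkeeping is the delicate part.

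The main obstacle will be controlling the remainder uniformly and justifying the interchange of the $s,t$-integration with the Taylor expansion: the expansion in $R$ is valid pointwise in $(s,t)$ only when $\|(I_{2d}+D_{s,t}\Theta_0)^{-1}D_{s,t}R\|$ is small, which can fail for large $s$ or $t$; but there $(st)^{-3/2}$ decays and one has a priori bounds $0 \le 1 - \det(\cdots)^{-1/2} \le 1$, so the tail contributes a controlled amount. I would split the $(s,t)$-region into a ``small'' part where the expansion converges and the explicit terms are extracted, and a ``large'' part estimated crudely, checking that the crude estimate is $O(\|U\|_\infty^2 + \|V\|_\infty^4)$ after using the positivity of $\Omega$ (its eigenvalues are bounded below, so $(I_d+s\Omega)^{-1}$ decays like $s^{-1}$). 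The fact that the claimed error is $\|V\|^4$ rather than $\|V\|^2$ is exactly what forces one to carry the quadratic $V$-terms explicitly into the main formula rather than discarding them, and getting that cancellation/identification right — matching the $\tr(V\Psi V\Psi)$ coefficient and verifying no uncancelled $\|V\|^2$ or $\|V\|^3$ survives — is where I expect to spend the most care. The dependence of all implied constants on $\Omega$ and $C_1$ is harmless since these enter only through lower bounds on the spectrum of $\Omega$ and the uniform bound $C_1$ on $\|U\|_\infty,\|V\|_\infty$.
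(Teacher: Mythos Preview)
Your plan is essentially the paper's proof: Berry's identity applied to each norm gives the double integral of the four-term difference $f_{t,s}-f_{t,0}-f_{0,s}+f_{0,0}$ with $f_{t,s}=\det(I_{2d}+J(t,s)^{1/2}\Theta J(t,s)^{1/2})^{-1/2}$, one factors out $\det(I_{2d}+J\Theta_0)^{-1/2}$, and expands the remaining determinant in traces of powers of the perturbation $Q_{t,s}$, identifying the $\Upsilon^2$, $\Upsilon\,\tr(U\Psi)$ and $\tfrac12\tr(V\Psi V\Psi)$ terms exactly as you describe (and there are indeed no pure odd-$V$ terms by the block structure, so the cubic remainder is $O(\|U\|^3+\|U\|\|V\|^2)\ll\|U\|^2+\|V\|^4$). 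Your one unnecessary worry is the large-$(s,t)$ region: since $t(I_d+t\Omega)^{-1}$ is bounded by $\|\Omega^{-1}\|$ for all $t\ge 0$, the perturbation $Q_{t,s}$ and all its $(t,s)$-derivatives are uniformly bounded on $(0,\infty)^2$, so no region splitting is needed---the Taylor remainder is controlled globally and the four-term difference structure supplies the factor $\min(t,1)\min(s,1)$ that makes everything integrable against $(ts)^{-3/2}$.
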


\subsection{The contributions of the singular and non-singular sets: setup}\label{ss:singular_and_non_singular_sets}
One major difficulty when working with the random field $F_m$ is that its covariance function, determined by $r_m(x)=\E[F_m(0)F_m(x)]$ does not decay at large distances. To deal with this difficulty, we follow the same strategy as in \cite{orruwi,rudwi2,krkuwi,benmaf}. The strategy is, we define a small ``singular set'' $S$, then compute the asymptotic of $K_2$ outside of $S$, and bound the contribution of $K_2$ on $S$.\\

The polynomial $p$ is elliptic so
\begin{equation}\label{eq:c_p_def}
c_p:=\sup_{m\in\mathfrak{S}}\sup_{\lambda\in\Lambda_m}|\lambda|<+\infty\, .
\end{equation}
Moreover, by Assumption \ref{as:nd_eigvals}, the eigenvalues $(a_j(m))_{j=1,\dots,d}$ of $\Omega_m$ are all bounded from below 
uniformly in $m\in\mathfrak{S}$ by a positive constant $a_p$. Note that for each $m\in\mathfrak{S}$,
\[
da_p\leq \tr(\Omega_m)=\frac{1}{\calN_m}\sum_{\lambda\in\Lambda_m}|\lambda|^2\leq c_p^2
\]
\begin{equation}\label{eq:non_sing_bound}
0<\frac{a_p}{c_p^2}\leq \frac{1}{d}\leq \frac{1}{2}\, .
\end{equation}

\begin{defin}
\label{def:singular_set}
We call the point $x\in\mathbb{T}^d$ \textit{positive singular} (resp. \textit{negative singular}) if there exists a subset $\Lambda_x\subseteq\Lambda$ of density
\begin{equation*}
\frac{|\Lambda_x|}{|\Lambda_m|}>1-\frac{a_p}{4c_p^2}
\end{equation*}
such that $\cos 2\pi m^{1/2k}\langle\lambda,x\rangle>\frac{3}{4}$ (resp. $\cos 2\pi m^{1/2k}\langle\lambda,x\rangle<-\frac{3}{4}$) for all $\lambda\in\Lambda_x$. We cover $\T^d$ with a family $\calQ_m$ of $d$-cubes with disjoint interiors and side length $q_m\geq 8\pi\sqrt{d}c_pm^{1/2k}$. In particular, $|\calQ_m|\asymp q_m^d$. The \textit{singular set} $S_m$ is the union of all the cubes in $\calQ_m$ containing a (positive or negative) singular point.
\end{defin}
Firstly, the quantity $r_m(x)$ is small for $x\notin S_m$.
\begin{lemma}\label{lemma:off_singular_bound}
Let $m\in\mathfrak{S}$. Then, for all $x\in \T^d\setminus S_m$, $|r_m(x)|<1-\frac{1}{32}$.
\end{lemma}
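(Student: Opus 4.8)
The plan is to argue by contrapositive: I will show that if $|r_m(x)|$ is close to $1$, then $x$ must be a singular point (positive or negative), hence lies in $S_m$. Recall from \eqref{r} that $r_m(x)=\frac{1}{\calN_m}\sum_{\lambda\in\Lambda_m}e(m^{1/2k}\langle\lambda,x\rangle)=\frac{1}{\calN_m}\sum_{\lambda\in\Lambda_m}\cos(2\pi m^{1/2k}\langle\lambda,x\rangle)$, where the last equality holds because $\Lambda_m$ is symmetric ($\lambda\in\Lambda_m\iff-\lambda\in\Lambda_m$) so the imaginary parts cancel. Writing $\theta_\lambda=2\pi m^{1/2k}\langle\lambda,x\rangle$, we have $r_m(x)=\frac{1}{\calN_m}\sum_\lambda\cos\theta_\lambda$, a real average of quantities in $[-1,1]$.

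The key step is the following elementary dichotomy. Suppose $r_m(x)>1-\frac{1}{32}$. Let $B=\{\lambda\in\Lambda_m:\cos\theta_\lambda\le\frac{3}{4}\}$ be the set of ``bad'' frequencies. Then
\[
1-\tfrac{1}{32}<r_m(x)\le\frac{1}{\calN_m}\Big(|B|\cdot\tfrac34+(\calN_m-|B|)\cdot 1\Big)=1-\frac{|B|}{4\calN_m}\,,
\]
so $\frac{|B|}{\calN_m}<\frac{1}{8}$. By \eqref{eq:non_sing_bound}, $\frac{a_p}{c_p^2}\le\frac12$, hence $\frac{a_p}{4c_p^2}\le\frac18$, and therefore the complement $\Lambda_x:=\Lambda_m\setminus B$ satisfies $\frac{|\Lambda_x|}{\calN_m}>1-\frac18\ge 1-\frac{a_p}{4c_p^2}$ with $\cos\theta_\lambda>\frac34$ for all $\lambda\in\Lambda_x$; thus $x$ is positive singular. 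Symmetrically, if $r_m(x)<-1+\frac{1}{32}$, replacing $\cos\theta_\lambda$ by $-\cos\theta_\lambda$ (equivalently, bounding $\frac{1}{\calN_m}\sum_\lambda(-\cos\theta_\lambda)>1-\frac1{32}$) shows that the set where $\cos\theta_\lambda<-\frac34$ has density exceeding $1-\frac18$, so $x$ is negative singular. In either case $x\in S_m$ by Definition \ref{def:singular_set}, since $x$ lies in some cube of $\calQ_m$ containing the singular point $x$ itself. Contrapositively, for $x\in\T^d\setminus S_m$ we get $|r_m(x)|\le 1-\frac{1}{32}$; a strict inequality follows either because the above estimates are strict, or by noting that the number $\frac{1}{32}$ can be replaced by a slightly smaller constant in the definition-chasing without affecting the argument.

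I do not expect a serious obstacle here; the only point requiring mild care is the bookkeeping of constants — specifically checking that the threshold $\frac34$ appearing in Definition \ref{def:singular_set} is compatible with the $\frac{1}{32}$ gap (indeed $\frac14\cdot\frac{1}{8}$ on the density side matches, since a density-$\frac18$ bad set each contributing a deficit of at least $\frac14$ forces $r_m$ down by at least $\frac{1}{32}$), and that $\frac{a_p}{4c_p^2}\le\frac18$ as guaranteed by \eqref{eq:non_sing_bound}. One should also confirm that the cube containing $x$ genuinely belongs to $S_m$: this is immediate since $x$ is itself a singular point and $S_m$ is by definition the union of all cubes of $\calQ_m$ containing a singular point.
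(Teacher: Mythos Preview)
Your contrapositive strategy is the right one and matches the paper's approach, but there is a genuine error in the chain of inequalities. You correctly derive from $r_m(x)>1-\tfrac{1}{32}$ that the bad set $B$ has density $\tfrac{|B|}{\calN_m}<\tfrac{1}{8}$, and you correctly note from \eqref{eq:non_sing_bound} that $\tfrac{a_p}{4c_p^2}\le\tfrac{1}{8}$. But then you write
\[
\frac{|\Lambda_x|}{\calN_m}>1-\tfrac18\ \ge\ 1-\frac{a_p}{4c_p^2},
\]
and this last inequality is backwards: $\tfrac{a_p}{4c_p^2}\le\tfrac18$ gives $1-\tfrac{a_p}{4c_p^2}\ge 1-\tfrac18$, not the reverse. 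So knowing $\tfrac{|\Lambda_x|}{\calN_m}>\tfrac78$ does \emph{not} imply $\tfrac{|\Lambda_x|}{\calN_m}>1-\tfrac{a_p}{4c_p^2}$; the singular-point threshold could be as high as $1-\tfrac{a_p}{4c_p^2}\approx 1$ when $a_p/c_p^2$ is small (e.g.\ in high dimension, where \eqref{eq:non_sing_bound} gives $a_p/c_p^2\le 1/d$).

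Running the contrapositive correctly gives the bound the paper actually derives: if $x$ is not singular then $|B|/\calN_m\ge \tfrac{a_p}{4c_p^2}$, hence $r_m(x)\le 1-\tfrac{|B|}{4\calN_m}\le 1-\tfrac{a_p}{16c_p^2}$, a $p$-dependent bound away from $1$. Since \eqref{eq:non_sing_bound} only bounds $a_p/c_p^2$ from above, this bound is in general \emph{weaker} than $1-\tfrac{1}{32}$, not stronger. (The paper's own one-line proof cites this same inequality and then invokes \eqref{eq:non_sing_bound}; it shares the same direction issue in passing to the uniform constant $\tfrac{1}{32}$.) For the downstream applications only a $p$-dependent gap from $1$ is needed, so the fix is simply to state and use $|r_m(x)|<1-\tfrac{a_p}{16c_p^2}$.
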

\begin{proof}
Reasoning as in \cite[Lemma 6.5 (i)]{orruwi} we deduce that for $x\notin S_m$, $|r_m(x)|<1-\frac{a_p}{16 c_p^2}$ and we conclude by \eqref{eq:non_sing_bound}.
\end{proof}
Secondly, the contribution of $S_m$ to the integral defining the second moment \eqref{eq:integral_for_second_moment} is small.
\begin{lemma}\label{lemma:contribution_of_the_singular_set}
For each $l\in\N$, there exists $C=C(p,l)<+\infty$ such that for each $m\in\mathfrak{S}$, the following holds:
\begin{itemize}
\item The volume of the singular set satisfies the bound:
\[
|S_m|\leq C\int_{\T^d} |r(x)|^l dx\, .
\]
\item The integral of the two-point intensity over the singular set satisfies:
\[
\int_{S_m}|K_{2;m}(x)|dx\leq C\int_{\T^d}|r(x)|^ldx\, .
\]
\end{itemize}
\end{lemma}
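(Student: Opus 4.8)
Lemma \ref{lemma:contribution_of_the_singular_set} — proof plan.

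The plan is to control both quantities by the single integral $\int_{\T^d}|r_m(x)|^l\,dx$, where $r_m$ is the covariance function from \eqref{r}. The key elementary fact is that $r_m$ is a normalized exponential sum: $\calN_m r_m(x)=\sum_{\lambda\in\Lambda_m}e(m^{1/2k}\langle\lambda,x\rangle)$, so that integrating powers of $r_m$ over $\T^d$ counts length-$\ell$ correlations. More precisely, expanding $(\calN_m r_m)^{2j}$ and integrating over $\T^d$ picks out exactly the number of $2j$-tuples $(\lambda_1,\dots,\lambda_{2j})\in\Lambda_m^{2j}$ with $\sum_i\lambda_i=0$, i.e. $|\calC_{2j;m}|$ (using here that $\Lambda_m$ is symmetric under $\lambda\mapsto-\lambda$ so that the roles of ``$+$'' and ``$-$'' frequencies need not be separated). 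Thus $\int_{\T^d}|r_m|^l\,dx$ is a genuinely arithmetic quantity; the point of the lemma is simply to \emph{reduce} the geometric estimates to this arithmetic one, which is then handled by the results of section \ref{s:arithmetic}.

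For the first bullet — the volume of the singular set — I would argue as in \cite[Lemma 6.5]{orruwi} or the analogous bounds in \cite{krkuwi,benmaf}. By Definition \ref{def:singular_set}, if a point $x$ is (positive or negative) singular, then a proportion at least $1-\tfrac{a_p}{4c_p^2}$ of the $\lambda\in\Lambda_m$ have $|\cos 2\pi m^{1/2k}\langle\lambda,x\rangle|>\tfrac34$, and since any point of a cube in $\calQ_m$ is within distance $\asymp q_m\asymp c_p m^{1/2k}$ of the singular point, the same near-alignment (with a slightly worse constant, say $\tfrac12$ instead of $\tfrac34$, by a Lipschitz estimate on $\cos 2\pi m^{1/2k}\langle\lambda,\cdot\rangle$ which has Lipschitz constant $\ll c_p m^{1/2k}$ over a cube of side $q_m/(8\pi\sqrt d c_p m^{1/2k})\cdot$\dots — this is exactly why $q_m$ is chosen that way) holds for a positive proportion of frequencies at \emph{every} point $x$ of any cube meeting $S_m$. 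Consequently, on such a cube, $|r_m(x)|^2\gg 1$ uniformly (a positive proportion of the summands in $\calN_m r_m(x)$ point into a common half-plane, so their sum has modulus $\gg\calN_m$ — this is the content of the $a_p/c_p^2$ bookkeeping and \eqref{eq:non_sing_bound}), hence also $|r_m(x)|^l\gg_l 1$ there. Integrating this lower bound over $S_m$ gives $|S_m|\ll_l\int_{S_m}|r_m|^l\,dx\le\int_{\T^d}|r_m|^l\,dx$, which is the claim. The subtle point, and the one I would be most careful about, is the quantitative passage from ``$x$ is singular'' to ``$x$ lies in a cube on all of which a positive proportion of frequencies are nearly aligned'', since it requires the side length bound $q_m\ge 8\pi\sqrt d\,c_p m^{1/2k}$ \emph{inverted appropriately} — one wants $q_m$ \emph{small} relative to $m^{-1/2k}$ for the Lipschitz argument, so I would recheck the intended normalization of $q_m$ (likely $q_m\le$ rather than $\ge$, or the cube count is $|\calQ_m|\asymp q_m^{-d}$); in any case the structure of the argument is the standard one and only the constants need tracking.

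For the second bullet, I would combine the first bullet with a uniform pointwise bound on $K_{2;m}$ \emph{over the singular set}. By Proposition \ref{prop:second_intensity}, $K_{2;m}(x)=\tfrac{1}{2\pi\sqrt{1-r_m^2(x)}}\E[|w_1||w_2|]$ with $(w_1,w_2)\sim N(0,\Theta_m(x))$; the Gaussian expectation is bounded above by $(\E|w_1|^2\E|w_2|^2)^{1/2}=\tr(\Omega_m+U_m(x))$, and since $\Omega_m$ is bounded and $U_m(x)=-\tfrac{1}{L_m(1-r_m^2)}\nabla r_m\nabla r_m^T$ while $|\nabla r_m(x)|\ll \sqrt{L_m}$ uniformly (the frequencies are $O(m^{1/2k})$ so each derivative of $r_m$ costs a factor $\ll m^{1/2k}$, and $L_m=4\pi^2 m^{1/k}$), one gets $\E[|w_1||w_2|]\ll (1-r_m^2)^{-1}$. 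Altogether $|K_{2;m}(x)|\ll (1-r_m^2(x))^{-3/2}$. Now $1-r_m^2(x)\ge 1-|r_m(x)|$, and on the singular set $|r_m|$ may be very close to $1$, so a crude bound here is useless; instead I would use the complementary half of the dichotomy. Split $S_m=S_m^{\le}\cup S_m^{>}$ according to whether $|r_m(x)|\le 1-\tfrac{1}{64}$ or $>1-\tfrac{1}{64}$. On $S_m^{\le}$, $|K_{2;m}|\ll 1$, so its contribution is $\ll|S_m|\ll_l\int_{\T^d}|r_m|^l$ by the first bullet. On $S_m^{>}$, the bound $|K_{2;m}|\ll(1-r_m^2)^{-3/2}$ is still not integrable against nothing, so the right move is the one used in \cite{orruwi,krkuwi,benmaf}: on $S_m^{>}$ one has $|r_m(x)|^l\ge(1-\tfrac1{64})^l\gg_l 1$, hence $1\ll_l|r_m(x)|^l$, and moreover one shows (this is the genuinely delicate Kac--Rice estimate, done e.g. in \cite[section 4.2]{benmaf} via the regression formula and a careful lower bound on $\det\Theta_m$) that in fact $(1-r_m^2)^{-3/2}$ is dominated, after integration, by a bounded multiple of $\int_{S_m^{>}}|r_m|^l$ for $l$ large enough — the mechanism being that where $1-r_m^2$ is smallest, the conditional covariance $\Theta_m$ degenerates in a way that makes $\E[|w_1||w_2|]$ \emph{smaller}, not larger, killing the apparent singularity. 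I would import this degeneracy estimate essentially verbatim from \cite{benmaf} (our $F_m$ has the same structure; only the covariance matrix $\Omega_m$ of the gradient is now a general positive-definite matrix rather than $I_d/d$, but it stays in a fixed compact set of positive-definite matrices by Assumption \ref{as:nd_eigvals}, so all the constants survive). Combining the two pieces gives $\int_{S_m}|K_{2;m}|\,dx\ll_l\int_{\T^d}|r_m|^l\,dx$, completing the proof.

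The main obstacle, as indicated, is the second bullet on $S_m^{>}$: one cannot afford the naive $|K_{2;m}|\ll(1-r_m^2)^{-3/2}$ bound but must exploit the compensating degeneracy of the conditional Gaussian law, which is precisely the technical heart of the Kac--Rice second-moment method for non-decaying covariances. Everything else is bookkeeping with the arithmetic interpretation of $\int|r_m|^l$ and the geometry of the cube cover.
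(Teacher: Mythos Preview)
Your first bullet is essentially the paper's argument: propagate the near-alignment of phases from the singular point to the whole cube by a Lipschitz bound (this is indeed why $q_m$ is chosen $\asymp m^{-1/2k}$; you are right that the inequality in Definition~\ref{def:singular_set} is misprinted), conclude $|r_m|\ge 1/4$ throughout $S_m$, and apply Chebyshev/Markov.

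For the second bullet there is a genuine gap, and the mechanism you invoke is not the right one. First, your pointwise bound is too pessimistic: since $U_m=-\tfrac{1}{L_m(1-r_m^2)}\nabla r_m\nabla r_m^T$ is \emph{negative} semidefinite, the diagonal of $\Theta_m$ is dominated by that of $\textup{diag}(\Omega_m,\Omega_m)$ (equivalently, conditioning a Gaussian only decreases variances), so $\E[|w_1||w_2|]\le\tr(\Omega_m)\ll 1$ uniformly, and hence
\[
K_{2;m}(x)\ \ll\ (1-r_m(x)^2)^{-1/2}
\]
already, not $(1-r_m^2)^{-3/2}$. This is the only role the ``degeneracy of $\Theta_m$'' plays, and it is exhausted by this one-line observation; it does \emph{not} by itself control $\int_{S_m}(1-r_m^2)^{-1/2}$, which can still blow up as $|r_m|\to 1$.

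What actually handles $S_m^{>}$ is a property of $r_m$ itself, not of $\Theta_m$: on any (say positive) singular cube $Q$, the same phase-alignment that gives $r_m\ge 1/4$ forces the Hessian to satisfy
\[
\langle v,\nabla^2 r_m(y)\,v\rangle\ \le\ -c\,m^{1/k}\,|v|^2\qquad(y\in Q,\ v\in\R^d),
\]
because the dominant contribution to $\nabla^2 r_m=-L_m\,\calN_m^{-1}\sum_\lambda \lambda\lambda^T\cos(\cdots)$ has all cosines $\ge 1/2$ and sums to essentially $-L_m\Omega_m$. Taylor-expanding around the maximiser $y_\ast$ of $r_m$ in $Q$ gives $1-r_m(y)\gg m^{1/k}|y-y_\ast|^2$, so $(1-r_m^2)^{-1/2}$ has only an integrable $|y-y_\ast|^{-1}$-type singularity in dimension $d\ge 2$, and
\[
\int_Q (1-r_m^2)^{-1/2}\,dy\ \ll\ |Q|.
\]
Summing over the cubes in $S_m$ and combining with the first bullet gives the claim. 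This Hessian-plus-Taylor step (from \cite[\S6.5]{orruwi}) is the missing ingredient in your plan; without it the $S_m^{>}$ contribution is not under control.
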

The proof of Lemma \ref{lemma:contribution_of_the_singular_set} is a variation on \cite[section 6]{orruwi}. We postpone it until subsection \ref{ss:contribution_of_the_singular_set}.

\subsection{Estimating integrals in the $K_{2;m}$ expansion}\label{ss:integrals_in_expansion}

Recall the definitions on $U_m$ and $V_m$ given in \eqref{eq:U_def} and \eqref{eq:V_def} respectively. When computing \eqref{eq:integral_for_second_moment}, we encounter a sum of principal terms and three integral remainder terms which we estimate here. The common upper bound is expressed in arithmetic terms as follows. For each $m\in\mathfrak{S}$, recall that $\calN_m=|\Lambda_m|$ and that $\calC_{4;m}=\{(\lambda,\mu,\nu,\iota)\in\Lambda_m^4\, :\, \lambda+\mu+\nu+\iota=0\}$ is the set of four-correlations in $\Lambda_m$.

\begin{lemma}[$\Theta_m(x)$, $U_m(x)$ and $V_m(x)$ are bounded]\label{lem:U_V_bound}
The matrices $\Theta_m(x)$, $U_m(x)$ and $V_m(x)$ are uniformly bounded in $m\in\mathfrak{S}$ and $x\in\T^d$.
\end{lemma}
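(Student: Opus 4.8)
The plan is to recognise that $\Theta_m(x)$ is, by its very construction, a \emph{conditional} covariance matrix, hence dominated in the positive semidefinite order by the corresponding \emph{unconditional} covariance matrix, which is trivially controlled; the matrices $U_m(x)$ and $V_m(x)$ are then read off as blocks of $\Theta_m(x)$. This bypasses any direct estimation of $\nabla r_m(x)$ or $\nabla^2 r_m(x)$ together with the awkward factors $(1-r_m^2)^{-1}$ appearing in \eqref{eq:U_def}--\eqref{eq:V_def}.

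First I would record the two elementary facts that drive everything. Since $p$ is elliptic, $c_p:=\sup_{m\in\mathfrak S}\sup_{\lambda\in\Lambda_m}|\lambda|<+\infty$ by \eqref{eq:c_p_def}, and, as noted in \eqref{eq:non_sing_bound}, $\tr(\Omega_m)=\calN_m^{-1}\sum_{\lambda\in\Lambda_m}|\lambda|^2\le c_p^2$; since $\Omega_m\succeq 0$ this gives $\|\Omega_m\|_{\mathrm{op}}\le c_p^2$ uniformly in $m\in\mathfrak S$. Next, fix $x\in\T^d$ with $r_m(x)^2<1$ (for $m$ large, Assumption \ref{as:ample} forces $\{x:r_m(x)^2=1\}$ to be finite, hence negligible; the bounds below pass to its continuous extension at the end). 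Recall from the proof of Proposition \ref{prop:second_intensity} that $L_m\Theta_m(x)=\Sigma_{22}-\Sigma_{21}\Sigma_{11}^{-1}\Sigma_{12}$, which is precisely the conditional covariance matrix of the Gaussian vector $(\nabla F_m(0),\nabla F_m(x))$ given $(F_m(0),F_m(x))=0$, and that $\Sigma_{22}$ is its unconditional covariance matrix. Because $\Sigma_{11}^{-1}\succeq 0$ we have $\Sigma_{21}\Sigma_{11}^{-1}\Sigma_{12}\succeq 0$, so
\[
0\preceq \Theta_m(x)\preceq L_m^{-1}\Sigma_{22}\, .
\]

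Now $L_m^{-1}\Sigma_{22}$ is the covariance matrix of $\tfrac{1}{\sqrt{L_m}}(\nabla F_m(0),\nabla F_m(x))$, hence positive semidefinite, and its trace is $2\,\tr(L_m^{-1}\tilde\Omega_m)=2\tr(\Omega_m)\le 2c_p^2$; a positive semidefinite matrix has operator norm at most its trace, so $\|L_m^{-1}\Sigma_{22}\|_{\mathrm{op}}\le 2c_p^2$, and by monotonicity of the operator norm on the positive semidefinite cone, $\|\Theta_m(x)\|_{\mathrm{op}}\le 2c_p^2$. Comparing with \eqref{eq:Theta_def}, $U_m(x)$ equals the top-left $d\times d$ block of $\Theta_m(x)$ minus $\Omega_m$, and $V_m(x)$ equals the top-right $d\times d$ block of $\Theta_m(x)$. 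Since any $d\times d$ block of a $2d\times 2d$ matrix has operator norm at most that of the whole matrix (it is obtained by pre- and post-composing with coordinate isometries), we conclude $\|V_m(x)\|_{\mathrm{op}}\le 2c_p^2$ and $\|U_m(x)\|_{\mathrm{op}}\le 2c_p^2+c_p^2=3c_p^2$, uniformly in $m\in\mathfrak S$ and in $x$.

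The only genuine point is the apparent singularity of $(1-r_m^2)^{-1}$ near the set $\{r_m^2=1\}$: a direct bound on $\nabla r_m(x)$ via Bernstein's inequality only yields $\|U_m(x)\|_{\mathrm{op}}\ll (1-r_m(x)^2)^{-1}$, which is useless there. The resolution is the observation above that the combination entering $\Theta_m(x)$ is a bona fide conditional covariance, hence $\preceq$ the unconditional one. (Equivalently, one could bound $U_m$ directly by extracting from positive semidefiniteness of the $3\times 3$ principal submatrix of the covariance of $(F_m(0),F_m(x),\langle\nabla F_m(0),e\rangle)$ the sharp inequality $\langle e,\nabla r_m(x)\rangle^2\le L_m\,(1-r_m(x)^2)\,\langle e,\Omega_m e\rangle$ for every unit vector $e$, and bound $V_m$ via $L_m^{-1}\nabla^2 r_m(x)=-\calN_m^{-1}\sum_{\lambda\in\Lambda_m}\cos\!\big(2\pi m^{1/2k}\langle\lambda,x\rangle\big)\,\lambda\lambda^*$, whose operator norm is at most $\calN_m^{-1}\sum_{\lambda}|\lambda|^2\le c_p^2$; this would then give $\|\Theta_m(x)\|_{\mathrm{op}}\le 4c_p^2$ as well.)
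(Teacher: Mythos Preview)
Your argument is correct and follows essentially the same route as the paper: recognise $\Theta_m(x)$ as the conditional covariance of $L_m^{-1/2}(\nabla F_m(0),\nabla F_m(x))$ given $F_m(0)=F_m(x)=0$, invoke the regression formula to bound it by the unconditional covariance (whose diagonal blocks are $\Omega_m$, controlled via $|\lambda|\le c_p$), and then read off $U_m$ and $V_m$ as block differences. The paper phrases this entrywise (diagonal entries of a covariance matrix dominate all entries), whereas you work with the operator norm and the PSD ordering $0\preceq\Theta_m(x)\preceq L_m^{-1}\Sigma_{22}$; the content is the same.
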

The proof of this lemma is identical to that of Lemma 5.5 \cite{benmaf} and Lemma 3.2 of \cite{krkuwi}
\begin{proof}
The matrix $\Theta_m(x)$ from Proposition \ref{prop:second_intensity} is a covariance matrix so its individual entries are bounded by its diagonal entries. Now, the diagonal coefficients of the matrices $\Theta_m(x)$ are given by the variances of $L_m^{-1/2}\partial_jF_m(0)$ conditionitioned on $F_m(0)=F_m(x)=0$. But by the regression formula (see Proposition 1.2 of \cite{azawsc}), conditioning only reduces the variance, and the unconditioned variance of $L_m^{-1/2}\partial_j F_m(0)$ is exactly
\[
\frac{1}{\calN_m}\sum_{\lambda\in\Lambda_m}\lambda_j^2\leq \max\{|\lambda|^2\, :\, \lambda \in \cup_m\Lambda_m\}<+\infty\, .
\]
These unconditioned variances are also the diagonal coefficients of $\Omega_m$. All in all, the entries of $\Theta_m(x)$ and $\Omega_m$ are uniformly bounded in $x$ and $m$ so the same is true for $U_m(x)$ and $V_m(x)$.
\end{proof}
Using Lemma \ref{lem:U_V_bound}, we can estimate the various integral terms appearing in the expansion stemming from Lemma \ref{lemma:berry_s_method} and \eqref{eq:integral_for_second_moment}. We gather them in the present lemma whose proof we postpone until subsection \ref{ss:variance_aux_1}.
\begin{lemma}\label{lemma:remainder_estimates}
Uniformly for each $m\in\mathfrak{S}$, the following holds:
\begin{align*}
\int_{\T^d}r_m(x)^2dx&=\frac{1}{\calN_m}\, ;\\
\int_{\T^d} r_m(x)^4dx&=\frac{|\calC_{4;m}|}{\calN_m^4}\, ;\\
\int_{\T^d}\tr(U_m(x)\Psi(\Omega_m))dx&=-\frac{1}{\calN_m^2}\sum_{\lambda\in\Lambda_m}\langle\lambda,\Psi(\Omega_m)\lambda\rangle+O_p\left(\frac{|\calC_{4;m}|}{\calN_m^4}\right)+O_p(|S_m|)\, ;\\
\int_{\T^d}\tr\br{V_m(x)\Psi(\Omega_m)V_m(x)\Psi(\Omega_m)}dx&=\frac{1}{\calN_m^2}\sum_{\lambda\in\Lambda_m}\langle\lambda,\Psi(\Omega_m)\lambda\rangle^2+O_p\left(\frac{|\calC_{4;m}|}{\calN_m^4}\right)+O_p(|S_m|)\, ;\\
\int_{\T^d}\|U_m(x)\|_\infty^2+\|V_m(x)\|_\infty^4dx&\ll_p \frac{|\calC_{4;m}|}{\calN_m^4}+|S_m|\, .
\end{align*}
\end{lemma}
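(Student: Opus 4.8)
The first two identities are pure orthogonality: since the frequencies $m^{1/2k}\lambda$ ($\lambda\in\Lambda_m$) lie in $\Z^d$, expanding $r_m^2$ and $r_m^4$ via \eqref{r} and integrating term by term leaves only the diagonal terms, giving $\int_{\T^d}r_m^2\,dx=\calN_m^{-1}$ and $\int_{\T^d}r_m^4\,dx=\calN_m^{-4}|\calC_{4;m}|$. For the remaining three, the plan is first to put the integrands into Fourier form. Differentiating \eqref{r} gives $\nabla r_m=i\sqrt{L_m}\,g_m$ with $g_m(x):=\calN_m^{-1}\sum_{\lambda\in\Lambda_m}\lambda\,e(m^{1/2k}\lambda\cdot x)$, and $-L_m^{-1}\nabla^2r_m=A_m$ with $A_m(x):=\calN_m^{-1}\sum_\lambda\lambda\lambda^T e(m^{1/2k}\lambda\cdot x)$; hence, writing $\Psi:=\Psi(\Omega_m)$ and $B_m:=-L_m^{-1}\nabla r_m\nabla r_m^T$,
\[
\tr(U_m\Psi)=\frac{G_m}{1-r_m^2},\qquad G_m:=\tr(B_m\Psi)=\frac{1}{\calN_m^2}\sum_{\lambda,\mu}\langle\lambda,\Psi\mu\rangle\,e(m^{1/2k}(\lambda+\mu)\cdot x),
\]
and $V_m=A_m+\frac{r_m}{1-r_m^2}B_m$. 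I would record at the outset the uniform bounds $\|A_m\|_{\mathrm{op}}\le\tr(\Omega_m)\le c_p^2$, $\|B_m\|_{\mathrm{op}}=L_m^{-1}|\nabla r_m|^2=|g_m|^2\le c_p^2$ (with $c_p$ from \eqref{eq:c_p_def}) and $\|\Psi(\Omega_m)\|_{\mathrm{op}}\ll_p1$ (Assumption \ref{as:nd_eigvals} and continuity of $\Psi$), together with the $L^4$-bound $\int_{\T^d}|g_m|^4\,dx\ll_p|\calC_{4;m}|/\calN_m^4$, which follows exactly as the formula for $\int r_m^4$ by expanding $(|g_m|^2)^2$ and observing that the number of quadruples with $\lambda-\mu+\nu-\iota=0$ equals $|\calC_{4;m}|$ because $\Lambda_m=-\Lambda_m$.

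Next I would split every $\int_{\T^d}$ into $\int_{S_m}+\int_{\T^d\setminus S_m}$. On $S_m$, Lemma \ref{lem:U_V_bound} gives $\|U_m\|_{\mathrm{op}},\|V_m\|_{\mathrm{op}}\ll_p1$, so each integrand is $O_p(1)$ there and contributes $O_p(|S_m|)$. On $\T^d\setminus S_m$, Lemma \ref{lemma:off_singular_bound} gives $1-r_m^2\ge\tfrac{1}{32}$, so I may write $\tfrac{1}{1-r_m^2}=1+\tfrac{r_m^2}{1-r_m^2}$ with $0\le\tfrac{r_m^2}{1-r_m^2}\le32\,r_m^2$, $\tfrac{r_m}{1-r_m^2}=r_m+\tfrac{r_m^3}{1-r_m^2}$ with $\bigl|\tfrac{r_m^3}{1-r_m^2}\bigr|\le32\,r_m^2$ (using $|r_m|\le1$), and $\tfrac{r_m^2}{(1-r_m^2)^2}\le1024\,r_m^2$. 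Keeping only the leading pieces and then restoring the full torus (at the price of an $O_p(|S_m|)$ error), orthogonality produces the claimed main terms: $\int_{\T^d}G_m\,dx=-\calN_m^{-2}\sum_\lambda\langle\lambda,\Psi\lambda\rangle$ and $\int_{\T^d}\tr(A_m\Psi A_m\Psi)\,dx=\calN_m^{-2}\sum_\lambda\langle\lambda,\Psi\lambda\rangle^2$ (in each case only $\mu=-\lambda$ survives), whereas the $r_m$-part of the cross-term of $\tr(V_m\Psi V_m\Psi)$ is already an error: $\bigl|\int_{\T^d}r_m\,\tr(A_m\Psi B_m\Psi)\,dx\bigr|\le\calN_m^{-4}\sum_{\kappa+\nu+\lambda+\mu=0}|\langle\nu,\Psi\lambda\rangle\langle\mu,\Psi\nu\rangle|\ll_p|\calC_{4;m}|/\calN_m^4$, the coefficients being $O_p(1)$.

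It remains to bound, over $\T^d\setminus S_m$, the error pieces carrying a factor $r_m^2$: namely $\int r_m^2|G_m|$, $\int r_m^2|\tr(A_m\Psi B_m\Psi)|$ and $\int r_m^2|\tr(B_m\Psi B_m\Psi)|$. Using $|G_m|\le\|\Psi\|_{\mathrm{op}}|g_m|^2$, $|\tr(A_m\Psi B_m\Psi)|\le\|\Psi\|_{\mathrm{op}}^2\|A_m\|_{\mathrm{op}}|g_m|^2$ and $|\tr(B_m\Psi B_m\Psi)|\le\|\Psi\|_{\mathrm{op}}^2|g_m|^4\le\|\Psi\|_{\mathrm{op}}^2c_p^2|g_m|^2$, each of these is $\ll_p\int_{\T^d}r_m^2|g_m|^2\,dx\le\bigl(\int r_m^4\,dx\bigr)^{1/2}\bigl(\int|g_m|^4\,dx\bigr)^{1/2}\ll_p|\calC_{4;m}|/\calN_m^4$ by Cauchy--Schwarz and the two $L^4$-bounds. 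For the last identity, on $\T^d\setminus S_m$ one has $\|U_m\|_\infty\ll\|B_m\|_{\mathrm{op}}\le|g_m|^2$ and $\|V_m\|_\infty\ll\|A_m\|_\infty+|g_m|^2$, so $\|U_m\|_\infty^2+\|V_m\|_\infty^4\ll\|A_m\|_\infty^4+|g_m|^4$ (using $|g_m|^8\le c_p^4|g_m|^4$), and $\int_{\T^d}\|A_m\|_\infty^4\,dx\ll_d\sum_{i,j}\int A_{m,ij}^4\,dx\ll_p|\calC_{4;m}|/\calN_m^4$ by the same quadruple count, while the $S_m$-part is $O_p(|S_m|)$.

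The delicate point is the middle step: one must verify that after removing the leading constant from $\tfrac{1}{1-r_m^2}$ (resp.\ the leading $r_m$ from $\tfrac{r_m}{1-r_m^2}$), \emph{every} surviving term on $\T^d\setminus S_m$ carries at least one factor $r_m^2$, so that no negative power of $1-r_m^2$ is ever integrated and the size of all remainders is governed by $\int r_m^4=|\calC_{4;m}|/\calN_m^4$ rather than the much larger $\int r_m^2=\calN_m^{-1}$. Establishing the companion estimate $\int|g_m|^4\ll_p|\calC_{4;m}|/\calN_m^4$ and checking that every implied constant is uniform in $m\in\mathfrak{S}$ (through $c_p<\infty$, which is ellipticity, and Assumption \ref{as:nd_eigvals}) is where the real content lies; the bounds on $S_m$ are inherited verbatim from Lemma \ref{lem:U_V_bound}.
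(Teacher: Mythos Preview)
Your proof is correct and follows essentially the same route as the paper's: split $\T^d=S_m\cup(\T^d\setminus S_m)$, use Lemma~\ref{lem:U_V_bound} on $S_m$, Taylor-expand the $(1-r_m^2)$-denominators on the complement via Lemma~\ref{lemma:off_singular_bound}, extract the main terms by orthogonality, and control the remainders through the four-correlation count. The only cosmetic difference is that you package the error estimates through the auxiliary bound $\int_{\T^d}|g_m|^4\,dx\ll_p|\calC_{4;m}|/\calN_m^4$ and Cauchy--Schwarz, whereas the paper expands each mixed product (e.g.\ $r_m^2L_m^{-1}|\nabla r_m|^2$) directly as a quadruple Fourier sum and reads off the $|\calC_{4;m}|/\calN_m^4$ bound term by term; both arguments are equivalent and equally short.
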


\subsection{Conclusion: proof of Proposition \ref{prop:variance} }\label{ss:variance_conclusion}

We now derive Proposition \ref{prop:variance} from the results presented in the previous subsections of section \ref{s:covariance_proofs}.
\begin{proof}[Proof of Proposition \ref{prop:variance}]
Let $m\in\mathfrak{S}$. By equations \eqref{eq:integral_for_second_moment} and \eqref{eq:K2_def}
\[
\E[\calV_m^2]=L_m\int_{\T^d} K_{2;m}(x)dx\, .
\]
Let $(w_1,w_2)$, $\Theta_m(x)$, $U_m(x)$ and $V_m(x)$ be as in Proposition \ref{prop:second_intensity}. By Proposition \ref{prop:second_intensity}, for each $x\in\T^d\setminus S_m$,
\begin{align*}
2\pi K_{2;m}(x)&\overset{\textup{Proposition }\ref{prop:second_intensity}}{=}\frac{1}{\sqrt{1-r_m(x)^2}}\E[|w_1||w_2|]\\
&\overset{\textup{Lemma }\ref{lemma:off_singular_bound}}{=}\left(1+\frac{1}{2}r_m(x)^2\right)\E[|w_1||w_2|]+O(r_m(x)^4\E[w_1^2])\\
&\overset{\textup{Lemma }\ref{lem:U_V_bound}}{=}\left(1+\frac{1}{2}r_m(x)^2\right)\E[|w_1||w_2|]+O(r_m(x)^4)\\
&\overset{\textup{Lemma }\ref{lemma:berry_s_method}}{=}\left(1+\frac{1}{2}r_m(x)^2\right)\times\\
&\left\{\Upsilon(\Omega_m)^2+\Upsilon(\Omega_m)\tr(U_m(x)\Psi(\Omega_m))+\frac{1}{2}\tr(V_m(x)\Psi(\Omega_m)V_m\Psi(\Omega_m)\right\}\\
&+O\left(\|U_m(x)\|_\infty^2+\|V_m(x)\|_\infty^4+r_m(x)^4\right)\\
&=\br{1+\frac{1}{2}r_m(x)^2}\Upsilon(\Omega_m)^2+\Upsilon(\Omega_m)\tr(U_m(x)\Psi(\Omega_m))+\frac{1}{2}\tr(V_m(x)\Psi(\Omega_m)V_m\Psi(\Omega_m))\\
&+O\left(r_m(x)^4+\|U_m(x)\|_\infty^2+\|V_m(x)\|_\infty^4\right)\, .
\end{align*}
In the last line we used that $\Upsilon(\Omega_m)$ and $\Psi(\Omega_m)$ are uniformly bounded so $r_m(x)^2=O(r_m(x)^2\|U_m(x)\|_\infty)=O(r_m(x)^4+\|U_m(x)\|_\infty^4)$ and similarly for the term involving $V_m(x)$. By integrating over $\T^d\setminus S_m$ and $S_m$ and controlling the error terms coming from the integrals over $S_m$ using Lemmas \ref{lem:U_V_bound}, Lemma \ref{lemma:contribution_of_the_singular_set} and \ref{lemma:remainder_estimates}, we deduce that
\begin{multline*}
2\pi\int_{\T^d}K_{2;m}(x)dx=\br{1+\frac{1}{2}\times\frac{1}{\calN_m}}\Upsilon(\Omega_m)^2\\-\Upsilon(\Omega_m)\times\frac{1}{\calN_m^2}\sum_{\lambda\in\Lambda_m}\langle\lambda,\Psi(\Omega_m)\lambda\rangle+\frac{1}{2}\times\frac{1}{\calN_m^2}\sum_{\lambda\in\Lambda_m}\langle\lambda,\Psi(\Omega_m)\lambda\rangle^2+O\br{\frac{|\calC_{4;m}|}{\calN_m^4}}\, .
\end{multline*}
In the above computation the remainder $O(|S_m|)$ is bounded by $\int_{\T^d}r_m(x)^4dx$ using Lemma \ref{lemma:contribution_of_the_singular_set} which is bounded by $\calN_m^{-4}\calC_{4;m}$ by Lemma \ref{lemma:remainder_estimates}. By Proposition \ref{prop:expectation}, writing $\Xi_m(\lambda)=\Upsilon(\Omega_m)^{-1}\langle\lambda,\Psi(\Omega_m)\lambda\rangle$,
\begin{align*}
2\pi\int_{\T^d}K_{2;m}(x)dx&=\frac{4\pi^2}{L_m}\E[\calV_m]^2+\frac{\Upsilon(\Omega_m)^2}{2\calN_m}\times\frac{1}{\calN_m}\sum_{\lambda\in\Lambda_m}\br{1-2\Xi_m(\lambda)+\Xi_m(\lambda)^2}+O\br{\frac{|\calC_{4;m}|}{\calN_m^4}}\\
&=\frac{4\pi^2}{L_m}\E[\calV_m]^2+\frac{1}{2\calN_m}\times \frac{4\pi^2}{L_m}\E[\calV_m]^2\times\frac{1}{\calN_m}\sum_{\lambda\in\Lambda_m}\br{1-\Xi_m(\lambda)}^2+O\br{\frac{|\calC_{4;m}|}{\calN_m^4}}\, .
\end{align*}
\end{proof}
Hence,
\[
\textup{Var}(\calV_m)=\frac{\pi}{\calN_m}\times\E[\calV_m]^2\times\frac{1}{\calN_m}\sum_{\lambda\in\Lambda_m}\br{1-\Xi_m(\lambda)}^2+O\br{\frac{L_m|\calC_{4;m}|}{\calN_m^4}}\, .
\]
\subsection{Proof of Lemma \ref{lemma:remainder_estimates}}\label{ss:variance_aux_1}

\begin{proof}[Proof of Lemma \ref{lemma:remainder_estimates}]
\begin{itemize}
\item We prove the first point as follows: $\int_{\T^d}r_m(x)^2dx=\frac{1}{\calN_m}\times\frac{1}{\calN_m}\sum_{\lambda,\lambda'\in\Lambda_m}\int_{\T^d}e(\langle\lambda-\lambda',x\rangle)dx=\frac{1}{\calN_m}\sum_{\lambda,\lambda'\in\Lambda_m}\delta_{\lambda=\lambda'}=\frac{1}{\calN_m}$.
\item The second point is similar:
\[
\int_{\T^d} r_m(x)^4dx=\frac{1}{\calN_m^4}\sum_{\lambda_1,\dots,\lambda_4\in\Lambda_m}\delta_{\lambda_1+\cdots+\lambda_4=0}=\frac{|\calC_{4;m}|}{\calN_m^4}\, .
\]
\item To estimate the integral of $\tr(U_m(x)\Psi(\Omega_m))$ with $U_m(x)=-\frac{1}{L_m}\frac{1}{1-r_m(x)^2}\nabla r_m(x)\nabla r_m(x)^T$ recall that by Lemma \ref{lem:U_V_bound}, $U_m(x)$ is uniformly bounded on the singular set $S_m$ and observe that by the Cauchy-Schwarz inequality and the definition of $r_m$, $(L_m^{-1/2}\partial _jr_m(x))^2\leq r_m(0)(\Omega_m)_{jj}$ which is uniformly bounded in $j$, $m\in\mathfrak{S}$ and $x\in\T^d$. On the other hand, by Lemma \ref{lemma:off_singular_bound}, on $\T^d\setminus S_m$, $(1-r_m(x)^2)^{-1}=1+O(r_m^2)$. Hence,
\begin{align*}
\int_{\T^d} U_m(x)dx&=\int_{\T^d\setminus S_m} U_m(x)dx+O_p(|S_m|)\\
&=\int_{\T^d\setminus S_m} -\frac{1}{L_m}\nabla r_m(x)\nabla r_m(x)^Tdx+O(r_m(x)^2|\nabla r_m(x)|^2)+O_p(|S_m|)\\
&=\int_{\T^d} -\frac{1}{L_m}\nabla r_m(x)\nabla r_m(x)^Tdx+O(r_m(x)^2|\nabla r_m(x)|^2)+O_p(|S_m|)\\
&=-\frac{1}{\calN_m^2}\sum_{\lambda\in\Lambda_m}\lambda\lambda^T+\int_{\T^d}O(L_m^{-1}r_m(x)^2|\nabla r_m(x)|^2)dx+O_p(|S_m|)\, .
\end{align*}
But $r_m(x)^2(L_m^{-1/2}\partial_jr_m(x))^2=\calN_m^{-4}\sum_{\lambda_1,\dots,\lambda_4\in\Lambda}(\lambda_3)_j(\lambda_4)_je(\langle\lambda_1+\cdots+\lambda_4,x\rangle)$ so
\[
\int_{\T^d}O(L_m^{-1}r_m(x)^2|\nabla r_m(x)|^2)dx=O_p\left(\frac{|\calC_{4;m}|}{\calN_m^4}\right)\, .
\]
All in all, since $\Psi(\Omega_m)$ is uniformly bounded (see below Definition \ref{def:omega} and below \eqref{eq:upsilon_psi_definition}), as announced,
\[
\int_{\T^d} \tr(U_m(x)\Psi(\Omega_m))dx=-\frac{1}{\calN_m^2}\sum_{\lambda\in\Lambda_m}\langle\lambda,\Psi(\Omega_m)\lambda\rangle+O_p\left(\frac{|\calC_{4;m}|}{\calN_m^4}\right)+O_p(|S_m|)\, .
\]
\item Reasoning as for $U_m(x)$, we deduce that the integral of $\tr\br{V_m(x)\Psi(\Omega_m)V_m(x)\Psi(\Omega_m)}$ where $V_m(x)=-\frac{1}{L_m}(\nabla^2r_m(x)+\frac{r_m(x)}{1-r_m(x)^2}\nabla r_m(x)\nabla r_m(x)^T)$ satisfies the following expansion:
\begin{multline*}
\int_{\T^d}\tr\br{V_m(x)\Psi(\Omega_m)V_m(x)\Psi(\Omega_m)}dx=\frac{1}{L_m^2}\int_{\T^d}\tr\br{\nabla^2r_m(x)\Psi(\Omega_m)\nabla^2r_m(x)\Psi(\Omega_m)}dx\\
+\int_{\T^d}O(L_m^{-2}r_m(x)\tr(\nabla^2r_m(x)\Omega_m\nabla r_m(x)\nabla r_m^T(x))dx\\
+\int_{\T^d}O(L_m^{-2}r_m(x)^2\tr(\nabla r_m(x)\nabla r_m(x)^T\Omega_m\nabla r_m(x)\nabla r_m(x)^T\Omega_m)dx+O_p(|S_m|)\, .
\end{multline*}
But just as for $U_m(x)$ the two first remainder terms may be bounded by $\frac{|\calC_{4;m}|}{\calN_m^4}$ so
\begin{multline*}
\int_{\T^d}\tr\br{V_m(x)\Psi(\Omega_m)V_m(x)\Psi(\Omega_m)}dx=\frac{1}{L_m^2}\int_{\T^d}\tr\br{\nabla^2r_m(x)\Psi(\Omega_m)\nabla^2r_m(x)\Psi(\Omega_m)}dx\\
+O_p\left(\frac{|\calC_{4;m}|}{\calN_m^4}\right)+O_p(|S_m|)\, .
\end{multline*}
To conclude, using, as above, the orthogonality properties of the maps $x\mapsto e(\langle\xi,x\rangle)$ for different $\xi$ and the cyclicity of the trace,
\[
\frac{1}{L_m^2}\int_{\T^d}\tr\br{\nabla^2r_m(x)\Psi(\Omega_m)\nabla^2r_m(x)\Psi(\Omega_m)}dx=\frac{1}{\calN_m^2}\sum_{\lambda\in\Lambda_m}\langle\lambda,\Psi(\Omega_m)\lambda\rangle^2\, .
\]
\item To upper bound the integral of $\|U_m(x)\|_\infty^2+\|V_m(x)\|^4_\infty$, we reason as above. First, by removing the singular set, we can bound $(1-r_m(x)^2)^{-1}$ uniformly, both in $U_m(x)$ and $V_m(x)$. Second, we note that each term is a polynomial in $r_m(x)$ and its derivatives whose monomials are of degree at least four. Moreover, the total number of derivatives in the factors of each monomial is equal to the power of $L_m^{-1/2}$ appearing in front of it. Therefore, we have, uniformly for $m\in\mathfrak{S}$ and $x\in \T^d\setminus S_m$,
\[
\|U_m(x)\|_\infty^2+\|V_m(x)\|^4_\infty\ll\frac{1}{\calN_m^4}\sum_{\lambda_1,\dots,\lambda_4\in\Lambda_m}P(\lambda_1,\lambda_2,\lambda_3,\lambda_4)e(\langle\lambda_1+\cdot+\lambda_4,x\rangle)
\]
where $P$ is a univeral polynomial in four variables. The right-hand side is uniformly bounded in $m$ and $x$ and its integral on $\T^d$ is bounded by $\frac{|\calC_{4;m}|}{\calN_m^4}$ so
\[
\int_{\T^d}\|U_m(x)\|_\infty^2+\|V_m(x)\|^4_\infty dx\ll_p \frac{|\calC_{4;m}|}{\calN_m^4}+|S_m|\, .
\]
\end{itemize}
\end{proof}

\subsection{Proof of Lemma \ref{lemma:contribution_of_the_singular_set}}\label{ss:contribution_of_the_singular_set}
\begin{proof}[Proof of Lemma \ref{lemma:contribution_of_the_singular_set}]
We begin by proving that for each $x\in S_m$,
\begin{equation}\label{eq:singular_set_proof_1}
|r_m(x)|>\frac{1}{4}\, .
\end{equation}
Indeed, let $Q\in\calQ_m$ containing a positive singular point $x$. By definition of $q_m$ and $c_p$ (see Definition \ref{def:singular_set} and \eqref{eq:c_p_def}), for each $y\in Q$, $\cos2\pi m^{1/2k}\langle \lambda,y\rangle\geq \frac{3}{4}-2\pi|\lambda|m^{1/2k}/q_m\geq \frac{1}{2}$.  Therefore,
\[
r_m(x)=\frac{1}{\calN_m}\sum_{\lambda\in\Lambda_m}\cos(2\pi m^{1/2k}\langle\lambda,x\rangle)>\frac{1}{|\Lambda_m|}\left(\frac{1}{2}|\Lambda_x|-|\Lambda_m\setminus\Lambda_x|\right)>\frac{1}{2}-\frac{3a_p}{8c_p^2}\overset{\eqref{eq:non_sing_bound}}{\geq} \frac{1}{2}-\frac{3}{16}>\frac{1}{4}\, .
\]
Similarly, if $x$ is negative singular, $r_m(x)<-\frac{1}{4}$ so \eqref{eq:singular_set_proof_1} holds. From \eqref{eq:singular_set_proof_1} and the Markov inequality, we deduce that for each $l\in\N$,
\begin{equation}\label{eq:singular_set_proof_2}
\textup{Vol}(S_m)<4^l\int_{\T^d}|r(x)|^ldx
\end{equation}
This proves the first point of the lemma. To prove the second point, it now suffices to show that
\begin{equation}\label{eq:singular_set_proof_3}
\int_{S_m}K_{2;m}(x)dx\leq C\textup{Vol}(S_m)
\end{equation}
for some constant $C=C(p)<+\infty$ independent of $m\in\mathfrak{S}$. To do so, we first observe that by Proposition \ref{prop:second_intensity}, for each $x\in\T^d$,
\[
K_{2;m}(x)=\frac{1}{2\pi\sqrt{1-r_m(x)^2}}\E[|w_1||w_2|]
\]
where the pair $(w_1,w_2)$ has the law of the pair $(\nabla F_m(0),\nabla F_m(x))$ conditioned on $F_m(0)=F_m(x)=0$. By Cauchy-Schwarz and stationarity, since variances of Gaussian vectors can only decrease under linear conditioning\footnote{This is a consequence of the regression formula. See Proposition 1.2 of \cite{azawsc}.}, we obtain
\begin{equation}\label{eq:singular_set_proof_4}
K_{2;m}(x)\leq \frac{\E[L_m^{-1}|\nabla F_m(0)|^2]}{2\pi\sqrt{1-r_m(x)^2}}=\frac{\tr(\Omega_m)}{2\pi\sqrt{1-r_m(x)^2}}\overset{\eqref{eq:c_p_def}}{\leq}\frac{c_p^2}{2\pi}\times\frac{1}{\sqrt{1-r_m(x)^2}}\, .
\end{equation}
If $x$ is positive singular, contained in some cube $Q\in\calQ_m$, we have no hope of finding a uniform upper bound for $(1-r_m^2)^{-1/2}$ on $Q$. Instead, we look for an integrable upper bound. For each $y\in Q$, let $\nabla^2r_m(y)$ be the Hessian of $r_m$ at $y$. We claim that for each $v\in\R^d$, and $y\in Q$,
\begin{equation}\label{eq:singular_set_proof_5}
\langle v,\nabla^2r_m(y) v\rangle\leq -c m^{1/k}|v|^2
\end{equation}
for some $c=c(p)>0$ independent of $m$, $x$ or $v$. Indeed, for each $y\in Q$ and $\lambda\in \Lambda_Q$, $\cos 2\pi m^{1/2k}\langle \lambda,y\rangle \geq \frac{1}{2}$ so that for each $v\R^d$,
\[
\langle v,H_\lambda(y)v\rangle:=\langle v,\nabla^2\cos (2\pi m^{1/2k}\langle \lambda,y\rangle)v\rangle=-4\pi^2m^{1/k}\cos 2\pi m^{1/2k}\langle \lambda,y\rangle\langle\lambda,v\rangle^2\leq -2\pi^2m^{1/k}\langle\lambda,v\rangle^2.
\]
On the other hand, if $\lambda\in\Lambda_m\setminus\Lambda_Q$, $\langle v,H_\lambda(y)v\rangle\leq 4\pi^2m^{1/k}\langle\lambda,v\rangle^2$ so that
\begin{multline*}
\langle v,\nabla^2 r_m(y)v\rangle\leq \frac{4\pi^2m^{1/k}}{\calN_m}\br{-1/2\sum_{\lambda\in\Lambda_Q}\langle\lambda,v\rangle^2+\sum_{\lambda\in\Lambda_m\setminus\Lambda_Q}\langle\lambda,v\rangle^2 }\\
\overset{\eqref{eq:non_sing_bound}}{\leq} 4\pi^2m^{1/k}\left(-1/2\langle v,\Omega_mv\rangle+\frac{a_p}{8c_p^2}c_p^2|v|^2\right)\leq 4\pi^2m^{1/k}(-7/8)a_p|v|^2
\end{multline*}
which proves \eqref{eq:singular_set_proof_5}. As in \cite[section 6.5]{orruwi}, we assume that $x$ is the maximum of $r_m$ on $Q$ and deduce, from Taylor expansion to order two, that for each $y\in Q$, $r_m(y)\leq 1-(c/2)m^{1/k}|x-y|^2$, from which we deduce that
\[
\int_Q\frac{dy}{\sqrt{1-r_m(y)^2}}\ll_p\textup{Vol}(Q)\, .
\]
Here we use that, by Definition \ref{def:singular_set}, the side length of $Q$ is $q_m=O(m^{-1/2k})$. By symmetry, the same estimate holds for cubes $Q$ containing a negative singular point so we deduce that
\[
\int_{S_m}\frac{dy}{\sqrt{1-r_m(y)^2}}\ll_p\textup{Vol}(S_m)\, .
\]
Together with \eqref{eq:singular_set_proof_4}, this proves \eqref{eq:singular_set_proof_3}, which, together with \eqref{eq:singular_set_proof_2}, completes the proof of the lemma.
\end{proof}

\subsection{Proof of Lemma \ref{lemma:berry_s_method}}\label{ss:variance_aux_3}
\begin{proof}[Proof of Lemma \ref{lemma:berry_s_method}.]
We adapt and generalise \cite[Lemma 5.1]{krkuwi} and \cite[Lemma 5.8]{benmaf} to our case. Here we present the main steps, highlighting the key differences with the cited papers, and we refer the interested reader to these for further details. By \eqref{eq:berry_s_method} we may write
\begin{equation}
\label{eq:berry_s_method_1}
2\pi\mathbb{E}[|v_1||v_2|]=
\iint_{\R^{2}_{+}}
[f_{0,0}(U,V)-f_{t,0}(U,V)-f_{0,s}(U,V)+f_{t,s}(U,V)]
\frac{dtds}{(ts)^{3/2}}
\end{equation}
where, setting $v=(v_1,v_2)$, $dv=dv_1dv_2$ and $J(t,s)=\br{\begin{matrix} tI_d & 0\\
0 & sI_d\end{matrix}}$,
\begin{align*}
f_{t,s}(U,V)&=
\int_{\R^{d}\times\R^{d}}
\frac{1}{\sqrt{(2\pi)^d\det{\Theta}}}e^{-\frac{1}{2}(t|v_1|^2+s|v_2|^2)}
e^{-\frac{1}{2}
\langle v,
\Theta^{-1}
v\rangle
}
dv\\
&=\brb{\det(\Theta)\det\br{J(t,s)+\Theta^{-1}}}^{-1/2}\\
&=\brb{\det(J(t,s))^{1/2}\det(\Theta+J(t,s)^{-1})\det(J(t,s))^{1/2}}^{-1/2}\\
&=\brb{\det\brb{I_{2d}+J(t,s)^{1/2}\Theta J(t,s)^{1/2}}}^{-1/2}\, .
\end{align*}
But,
\[
J(t,s)^{1/2}\Theta J(t,s)^{1/2}=\br{\begin{matrix}
t\Omega & 0 \\ 0 & s\Omega
\end{matrix}}+\br{\begin{matrix}
tU & \sqrt{ts}V\\ \sqrt{ts}V & s U
\end{matrix}}\, .
\]
We wish to expand $f_{t,s}(U,V)$ in powers of $U$ and $V$. To do so, we define $Q_{t,s}(U,V)$ as
\[
Q_{t,s}(U,V):=\br{I_{2d}+\begin{pmatrix}
t\Omega & 0 \\ 0 & s\Omega
\end{pmatrix}}^{-1/2}\begin{pmatrix}
tU & \sqrt{ts}V \\ \sqrt{ts}V & sU
\end{pmatrix}\br{I_{2d}+\begin{pmatrix}
t\Omega & 0 \\ 0 & s\Omega
\end{pmatrix}}^{-1/2}\, .
\]
In particular,
\[
f_{t,s}(U,V)=\det\left[I_{2d}
+
\begin{pmatrix}
t\Omega & 0 \\ 0 & s\Omega
\end{pmatrix}\right]^{-1/2}\det(I_{2d}+Q_{t,s}(U,V))^{-1/2}\, .
\]
Since $\Omega$ has positive eigenvalues, the matrices $\left(I_{2d}+
\begin{pmatrix}
t\Omega & 0 \\ 0 & s\Omega
\end{pmatrix}\right)^{-1/2}$ are uniformly bounded from above and below in $(t,s)$. Moreover, their successive derivatives in $t$ and $s$ are uniformly bounded from above. Careful consideration of the block decomposition of $Q_{t,s}(U,V)$ then shows that $\det(I_{2d}+Q_{t,s}(U,V))$ is analytic in the pair $(t,s)$. Moreover, it is easy to see that for each $C<+\infty$ and $N\in\N$, the two following estimates hold:
\begin{align}\label{eq:berry_s_method_2}\notag
\text{The map }(t,s,U,V)\mapsto f_{t,s}(U,V)\text{ has derivatives of order up to }N\\
\text{ which are bounded uniformly for }t,s\in(0,\infty)\text{ and }\|U\|_\infty,\|V\|_\infty\leq C\, .
\end{align}
and
\begin{align}\label{eq:berry_s_method_3}\notag
\text{The map }(t,s)\mapsto Q_{t,s}(U,V)\text{ has derivatives of order up to }N\\\notag
\text{ which, uniformly for }t,s\in(0,\infty)\text{ and }\|U\|_\infty,\|V\|_\infty\leq C\, ,\\
\text{are bounded by }C(\|U\|_\infty+\|V\|_\infty)\, .
\end{align}
These observations reduce the problem of expanding the integral from \eqref{eq:berry_s_method_1} in powers of $U$ and $V$ to expanding the integrand pointwise. Expanding $\det(I_{2d}+Q_{t,s}(U,V))^{-1/2}$ in powers of traces of powers of $Q_{t,s}(U,V)$ (using for instance the Girard-Waring formula for the coefficients of the characteristic polynomial of a matrix), we deduce that, for some universal constant $a_1,a_2,a_3,a_4\in\R$,
\begin{multline*}
f(t,s)=\det\left[I_{2d}
+
\begin{pmatrix}
tD & 0 \\ 0 & sD
\end{pmatrix}\right]^{-1/2}\\
\times\Big[1-\frac{1}{2}\tr(Q_{t,s})+\frac{1}{4}\tr(Q_{t,s}^2)+a_1\tr(Q_{t,s})^2+a_2\tr(Q_{t,s})\tr(Q_{t,s}^2)\\
+a_3\tr(Q_{t,s})^3+a_4\tr(Q_{t,s}^3)\Big]+g(t,s)=:\sum_{i=1}^7 h^i_{t,s}(U,V) + g(t,s)
\end{multline*}
where, by \eqref{eq:berry_s_method_2} and \eqref{eq:berry_s_method_3}, $g$ satisfies
\[
g(t,s)-g(t,0)-g(0,s)+g(0,0)=O\left(\min(t,1)\min(1,s)(\|U\|_\infty^4+\|V\|_\infty^4)\right)
\]
All that remains is to study each of the integrals
\[
A_i=\int_0^\infty\int_0^\infty h^i_{t,s}(U,V)-h^i_{t,0}(U,V)-h_{0,s}^i(U,V)+h^i_{0,0}(U,V)\frac{dtds}{(ts)^{3/2}}\, .
\]
We study them one by one as they each present some slight specificities:
\begin{itemize}
\item The first term, $h^1_{t,s}(U,V)=\det\left[I_{2d}
+
\begin{pmatrix}
t\Omega & 0 \\ 0 & s\Omega
\end{pmatrix}\right]^{-1/2}$ can be written as $\det(\Omega_t)\det(\Omega_s)$ where $\Omega_t=(I_d+t\Omega)^{-1/2}$ so that
\begin{equation}\label{eq:berry_s_method_h_1}
A_1=\left(\int_0^\infty t^{-3/2}(\det(\Omega_t)-1)dt\right)^2=\Upsilon(\Omega)^2\, .
\end{equation}
\item We can write $h^2_{t,s}(U,V)$ as
\[
h^2_{t,s}(U,V)=-\frac{1}{2}\det(\Omega_t)\det(\Omega_s)\left(t\tr(\Omega_tU\Omega_t)+s\tr(\Omega_sU\Omega_s)\right)=-\frac{1}{2}\det(\Omega_t)\det(\Omega_s)\left(t\tr(U\Omega_t^2)+s\tr(U\Omega_s^2)\right)
\]
so
\[
A_2=-\int_0^\infty t^{-3/2}(\det(\Omega_t)-1)dt\int_0^\infty t^{-1/2}\det(\Omega_t)\tr(U\Omega_t^2)dt\, .
\]
Recalling the definition of the integral $\Psi(\Omega)$ \eqref{eq:upsilon_psi_definition}, we deduce that
\begin{equation}\label{eq:berry_s_method_h_2}
A_2=\Upsilon(\Omega)\tr(U\Psi(\Omega))\, .
\end{equation}
\item The third term can be written as
\[
h^3_{t,s}(U,V)=\frac{1}{4}\det(\Omega_t)\det(\Omega_s)\Big(t^2\tr(\Omega_tU\Omega_t^2U\Omega_t)s^2\tr(\Omega_sU\Omega_s^2U\Omega_s)+2ts\tr(\Omega_tV\Omega_s^2V\Omega_t)\Big)\, .
\]
Hence,
\begin{multline*}
\int_0^\infty\int_0^\infty h^1_{t,s}(U,V)-h^1_{t,0}(U,V)-h_{0,s}^1(U,V)+h^1_{0,0}(U,V)\frac{dtds}{(ts)^{3/2}}\\
=\frac{1}{2}\int_0^\infty t^{-3/2}(\det(\Omega_t)-1)dt\int_0^\infty t^{1/2}\tr(\Omega_tU\Omega_t^2U\Omega_t)dt\\
+\frac{1}{2}\int_0^\infty\int_0^\infty\det(\Omega_t)\det(\Omega_s)\tr(\Omega_tV\Omega_s^2V\Omega_t)(ts)^{-1/2}dtds\, .
\end{multline*}
The first term is $O(\|U\|_\infty^2)$ so, using also the cyclicty of the trace, we obtain
\begin{align}\label{eq:berry_s_method_h_3}
A_3&=\frac{1}{2}\int_0^\infty\int_0^\infty\det(\Omega_t)\det(\Omega_s)\tr(V\Omega_s^2V\Omega_t^2)\frac{dtds}{(ts)^{1/2}}+O(\|U\|_\infty^2)\\\nonumber
&=\frac{1}{2}\tr(V\Psi(\Omega)V\Psi(\Omega))+O(\|U\|_\infty^2)\, .
\end{align}
\item The four remaining terms can be bounded as follows. For the fourth and sixth terms, the square and cube of the trace of $Q_{t,s}(U,V)$ involve only terms of order two and three in $U$ so, reasoning as in the previous point,
\begin{equation}\label{eq:berry_s_method_h_4_6}
A_4,A_6\ll\|U\|_\infty^2\, .
\end{equation}
The fifth term and $\tr(Q_{t,s}(U,V)^3)$ in the seventh term involve only terms of order three in $U$ or terms of order one in $U$ and two in $V$. so, as before
\begin{equation}\label{eq:berry_s_method_h_5_7}
A_5,A_7\ll\|U\|_\infty^2+\|U\|_\infty\|V\|_\infty^2\ll\|U\|_\infty^2+\|V\|_\infty^4\, .
\end{equation}
\end{itemize}
All in all, from equations \eqref{eq:berry_s_method_h_1}, \eqref{eq:berry_s_method_h_2}, \eqref{eq:berry_s_method_h_3}, \eqref{eq:berry_s_method_h_4_6} and \eqref{eq:berry_s_method_h_5_7}, we deduce that
\begin{multline*}
\int_0^\infty\int_0^\infty f(t,s)-f(t,0)-f(0,s)+f(0,0)\frac{dtds}{(ts)^{3/2}}\\
=\Upsilon(\Omega)^2+\Upsilon(\Omega)\tr(U\Psi(\Omega))+\frac{1}{2}\tr(V\Psi(\Omega)V\Psi(\Omega))+O(\|U\|_\infty^2)+O(\|V\|_\infty^4)\, .
\end{multline*}
By \eqref{eq:berry_s_method_1} we reach the desired result.
\end{proof}

\section{Nodal volume distribution: Proof of Proposition \ref{prop:second_chaos}}
\label{s:chaos}

In the present section, we prove Proposition \ref{prop:second_chaos}. To this end, we will use the Wiener Chaos expansion of the volume functional we are studying. Though we will recall the necessary definitions, we refer the reader to \cite{mprw00,cammar} for a more thorough account, and to \cite{noupec} for the general theory.\\

Let $m\in\N$. The field $(F_m(x))_{x\in\T^d}$ from \eqref{eq:field_definition} is measurable with respect to the random variables $\zeta_\lambda$ for $\lambda\in\Lambda_m$, defined on some underlyings probability space $(\Xi,\mathcal{F},\mathbb{P})$. Following \cite{mprw00,cammar} we define the space $\mathbf{A}_m$ as the closure in $L^2(\mathbb{P})$ of the space of linear combination of the random variables $F_m(x)$ for $x\in\T^d$. The space $\mathbf{A}_m$ is a (real) Gaussian Hilbert subspace of $L^2(\mathbb{P})$.\\

We define $C_m(0)$ be the subspace of $L^2(\mathbb{P})$ of constant random variables. Then, by induction, for each $k\in\N$, we define $C_m(k+1)$ as the orthogonal of $C_m(k)$ in the space of polynomials of degree at most $k+1$ in the elements of $\mathbf{A}_m$ for the $L^2$ scalar product. The space $C_m(k)$ is the \textit{$k$-th Wiener chaos} $C_m(k)$ associated with $\mathbf{A}_m$. In particular, any $L^2(\mathbb{P})$ random variable measureable with respect to the elements of $\mathbf{A}_m$ belongs to the closure of
\begin{equation}\label{eq:wiener_chaos}
\bigoplus_{k\geq 0} C_m(k)
\end{equation}
and the terms of this sum are orthogonal. The decomposition of a random variable along this sum is called the \textit{Wiener chaos expansion}. For each $q\in\N$, let $H_q(t)=(-1)^q\gamma(t)^{-1}\frac{d^q}{dt^q}\gamma(t)$ be the $q$-th Hermite polynomial, where $\gamma(t)=\frac{1}{\sqrt{2\pi}}e^{-\frac{1}{2}t^2}$ is the standard Gaussian density. In particular, $H_0(t)=1$ and $H_2(t)=t^2-1$. Let $a_1,\dots,a_{\calN_m}\in\mathbf{A}_m$ form an orthonormal basis $\mathbf{A}_m$. Then, an orthonormal basis of $C_m(k)$ is given by the collection of random variables
\[
\frac{1}{\sqrt{q_1!q_2!\dots q_{\calN_m}!}}H_{q_1}(a_1)\cdot\dots\cdot H_{q_{\calN_m}}(a_{\calN_m})
\]
where $q_1,\dots,q_{\calN_m}\in\N$ are any integers satisfying $q_1+\dots+q_{\calN_m}=k$.\\

In this section and the next, we will study the decomposition of the random variable $\calV_m$ along \eqref{eq:wiener_chaos}. More precisely, for each $k\in\N$, we denote by $\calV_m[k]$ the orthogonal projection of $\calV_m$ onto $C_m(k)$. Note that $\calV_m\in L^2(\mathbb{P})$ as explained in section 4.2 of \cite{cammar}. Hence,
\[
\calV_m=\sum_{k\geq 0}\calV_m[k]\, .
\]

Given $\xi=(\xi_1,\dots,\xi_d)$ a standard Gaussian vector in $\R^d$ and $\Omega$ a $d\times d$ symmetric matrix, we write the Wiener expansion of $\langle\xi,\Omega\xi\rangle^{1/2}$ as
\[
\langle\xi,\Omega\xi\rangle^{1/2}=\sum_{q=(q_1,\dots,q_d)\in\N}\alpha_q(\Omega)\frac{1}{\sqrt{q!}}H_q(\xi)
\]
where $q!:=q_1!\dots q_d!$ and $H_q(\xi):=H_{q_1}(\xi_1)\dots H_{q_d}(\xi_d)$.\\

The coefficients $\alpha_q(I_d)$ have been computed in Appendix A.2 of \cite{cammar}. We will use the following facts about the $\alpha_q(\Omega)$ coefficients.
\begin{lemma}\label{lemma:computing_chaos}
Let $\Omega$ be a $d\times d$ positive definite symmetric matrix. Let $v$ be an eigenvector of $\Omega$ of norm one with eigenvalue $a>0$. Then,
\[
\E\left[\langle\xi,\Omega\xi\rangle^{1/2}\right]=\frac{1}{\sqrt{2\pi}}\int_0^\infty 1-\det(I_d+t\Omega)^{-1/2}\frac{dt}{t^{3/2}}\, .
\]
and
\[
\E\left[\langle\xi,\Omega\xi\rangle^{1/2}(\langle \xi,v\rangle^2-1)\right]=\frac{1}{\sqrt{2\pi}}\int_0^\infty\det(I_d+t\Omega)^{-1/2}(1+ta)^{-1}\frac{adt}{t^{1/2}}\, .
\]
Let $q\in\N^d$. If $|q|$ is odd then $\alpha_q(\Omega)=0$. If $\Omega$ is diagonal and there exists $l\in\{1,\dots,d\}$ such that $q_l$ is odd then $\alpha_q(\Omega)=0$.
\end{lemma}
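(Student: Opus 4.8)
The plan is to deduce all four assertions from Berry's identity \eqref{eq:berry_s_method} together with standard one-dimensional Gaussian moment computations, very much in the spirit of the proof of Proposition \ref{prop:expectation}. For the first formula I would apply \eqref{eq:berry_s_method} with $z=\langle\xi,\Omega\xi\rangle$, take the expectation, and interchange it with the $t$-integral; since $1-\exp(-zt/2)\geq 0$ this is legitimate by Tonelli's theorem. The inner expectation $\E[\exp(-\tfrac{t}{2}\langle\xi,\Omega\xi\rangle)]$ is the Gaussian integral already evaluated in the proof of Proposition \ref{prop:expectation}, equal to $\det(I_d+t\Omega)^{-1/2}$, and the first identity follows immediately.

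For the second formula, apply \eqref{eq:berry_s_method} again, now to $\langle\xi,\Omega\xi\rangle^{1/2}(\langle\xi,v\rangle^2-1)$. Since $v$ has unit norm, $\langle\xi,v\rangle$ is a standard normal, so $\E[\langle\xi,v\rangle^2-1]=0$ and the constant term inside the Berry integral drops out, leaving $-\tfrac{1}{\sqrt{2\pi}}\int_0^\infty\E[\exp(-\tfrac{t}{2}\langle\xi,\Omega\xi\rangle)(\langle\xi,v\rangle^2-1)]\,t^{-3/2}\,dt$. Here the integrand is no longer of constant sign, so I would justify the interchange of expectation and integral by dominated convergence, using $|1-e^{-zt/2}|\leq\min(zt/2,1)$ together with the integrability of $z\,|\langle\xi,v\rangle^2-1|$. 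To evaluate the inner expectation I would diagonalise $\Omega$ in an orthonormal eigenbasis whose first vector is $v$, so that $\langle\xi,\Omega\xi\rangle=\sum_j a_j\xi_j^2$ and $\langle\xi,v\rangle^2=\xi_1^2$ and the expectation factorises over coordinates. Using $\E[e^{-\frac{t}{2}a_j\xi_j^2}]=(1+ta_j)^{-1/2}$ and $\E[e^{-\frac{t}{2}a\xi_1^2}\xi_1^2]=(1+ta)^{-3/2}$ (the latter obtained by differentiating the former in $a$), one finds $\E[\exp(-\tfrac{t}{2}\langle\xi,\Omega\xi\rangle)(\langle\xi,v\rangle^2-1)]=-\det(I_d+t\Omega)^{-1/2}\,ta/(1+ta)$. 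Substituting this back and simplifying the powers of $t$ produces the claimed expression.

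For the vanishing statements I would use that $\{H_q/\sqrt{q!}\}_{q\in\N^d}$ is an orthonormal basis of $L^2(\R^d,\gamma^{\otimes d})$, so that $\alpha_q(\Omega)=\E[\langle\xi,\Omega\xi\rangle^{1/2}H_q(\xi)]/\sqrt{q!}$. The map $\xi\mapsto\langle\xi,\Omega\xi\rangle^{1/2}$ is even while $H_q(-\xi)=(-1)^{|q|}H_q(\xi)$, so the substitution $\xi\mapsto-\xi$ forces $\alpha_q(\Omega)=0$ when $|q|$ is odd. When $\Omega$ is diagonal, $\langle\xi,\Omega\xi\rangle^{1/2}=(\sum_j a_j\xi_j^2)^{1/2}$ is even in each coordinate separately and $H_{q_l}(-\xi_l)=(-1)^{q_l}H_{q_l}(\xi_l)$, so reflecting only the $l$-th coordinate forces $\alpha_q(\Omega)=0$ whenever $q_l$ is odd. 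The only mildly delicate point in the whole argument is the dominated-convergence justification in the second formula, where the integrand changes sign, together with the bookkeeping of the powers of $t$ afterwards; everything else is a direct computation.
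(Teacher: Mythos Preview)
Your proposal is correct and follows essentially the same route as the paper: Berry's identity \eqref{eq:berry_s_method} for both integral formulas, diagonalisation along the eigenvector $v$ to factorise the Gaussian expectation, and the parity arguments for the vanishing of the $\alpha_q(\Omega)$. If anything, you are more explicit than the paper about the Tonelli/Fubini justification (the paper omits it), and your route to $\E[e^{-\frac{t}{2}a\xi_1^2}\xi_1^2]=(1+ta)^{-3/2}$ via differentiation is a minor stylistic variant of the direct computation in the paper; the resulting expression $-\det(I_d+t\Omega)^{-1/2}\,ta/(1+ta)$ and the final simplification coincide exactly with the paper's.
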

\begin{proof}
The last two points follow respectively from the parity of $\xi\mapsto\langle\xi,\Omega\xi\rangle^{1/2}$ and, if $\Omega$ is diagonal, the invariance of this same function by changes of signs of the coordinates of $\xi$. By \eqref{eq:berry_s_method},
\[
\E\left[\langle\xi,\Omega\xi\rangle^{1/2}\right]=\frac{1}{\sqrt{2\pi}}\int_0^\infty 1-\E\left[e^{-\frac{1}{2}t\langle x,\Omega x\rangle}\right]\frac{dt}{t^{3/2}}=\frac{1}{\sqrt{2\pi}}\int_0^\infty 1-\det(I_d+t\Omega)^{-1/2}\frac{dt}{t^{3/2}}\, .
\]
Let us assume for simplicity that $v=e_1$ so that $\langle\xi,v\rangle=\xi_1$. Moreover, since $v=1$, $\Omega$ is equal to some matrix $\hat{\Omega}$ of size $d-1\times d-1$ with an added row and column at the top left whose only nonzero coefficient is $\Omega_{11}=a$. Let $\hat{\xi}=(\xi_2,\dots,\xi_d)$. Then, for each $t\geq 0$,
\[
\E\left[\xi_1^2e^{-\frac{1}{2}t\langle \xi,\Omega \xi\rangle}\right]=\E\left[\xi_1^2e^{-\frac{1}{2}tax_1^2}e^{-\frac{1}{2}t\langle \hat{\xi},\hat{\Omega} \hat{\xi}\rangle}\right]=(1+ta)^{-3/2}\det(1+t\hat{\Omega})^{-1/2}=(1+ta)^{-1}\det(1+t\Omega)^{-1/2}\, .
\]
Thus, starting as we did above,
\begin{align*}
\E\left[\langle\xi,\Omega\xi\rangle^{1/2}(\xi_1^2-1)\right]&=\frac{-1}{\sqrt{2\pi}}\int_0^\infty\E\left[(\xi_1^2-1)e^{-\frac{1}{2}t\langle x,\Omega x\rangle}\right]\frac{dt}{t^{3/2}}\\
&=\frac{-1}{\sqrt{2\pi}}\int_0^\infty (1+ta)^{-1}\det(1+t\Omega)^{-1/2}-\det(1+t\Omega)^{-1/2}\frac{dt}{t^{3/2}}\\
&=\frac{1}{\sqrt{2\pi}}\int_0^\infty\det(I_d+t\Omega)^{-1/2}(1+ta)^{-1}\frac{adt}{t^{1/2}}\, .
\end{align*}
\end{proof}

The Wiener chaos expansion of the volume may be computed in terms of the $\alpha_q$. In particular, we have the following result.

\begin{lemma}[Appendix A of \cite{cammar}]\label{lemma:volume_chaos}
Let $P$ be an orthogonal matrix of size $d\times d$. Then,
\[
\calV_m[2k]=\sqrt{\frac{L_m}{2\pi}}\sum_{q\in\N^d,\, |q|\leq 2k} \frac{H_{2k-|q|}(0)}{\sqrt{(2k-|q|)!q!}}\alpha_q(P\Omega_mP^{-1})\int_{\T^d} H_{2k-|q|}(F_m(x))H_q\left(P(L_m\Omega_m)^{-1/2}\nabla F_m(x)\right)dx\, .
\]
\end{lemma}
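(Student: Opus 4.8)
The plan is to carry over to the present, non-isotropic setting the computation of Cammarota (Appendix A of \cite{cammar}; see also \cite{mprw00}), the only genuinely new ingredient being a linear change of variables that ``whitens'' the gradient. First I would record the Kac--Rice / coarea representation of the nodal volume: since $F_m$ is a.s.\ smooth and the Gaussian vector $(F_m(x),\nabla F_m(x))$ is non-degenerate for every $x$ (under Assumption \ref{as:ample}, for $m$ large), the coarea formula gives, as an identity in $L^2(\mathbb{P})$,
\[
\calV_m=\lim_{\eps\to 0}\frac{1}{2\eps}\int_{\T^d}\mathbf{1}_{[-\eps,\eps]}(F_m(x))\,|\nabla F_m(x)|\,dx=:\int_{\T^d}\delta_0(F_m(x))\,|\nabla F_m(x)|\,dx\, ,
\]
the $L^2(\mathbb{P})$-convergence being obtained as in Section 4.2 of \cite{cammar} (one uses $\calV_m\in L^2$ together with a second-moment bound of Kac--Rice type). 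Since the orthogonal projection $\Pi_{2k}$ of $L^2(\mathbb{P})$ onto $C_m(2k)$ is bounded and linear, it commutes with the integral over $\T^d$, so $\calV_m[2k]=\int_{\T^d}\Pi_{2k}\!\left[\delta_0(F_m(x))\,|\nabla F_m(x)|\right]dx$ and it suffices to identify the $2k$-th chaotic component of the integrand, for each fixed $x$.

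Next I would disentangle value and gradient and whiten the latter. By stationarity $\nabla r_m(0)=0$, hence for fixed $x$ the variables $F_m(x)$ and $\nabla F_m(x)$ are independent, with $F_m(x)\sim N(0,1)$ and $L_m^{-1/2}\nabla F_m(x)\sim N(0,\Omega_m)$ (Definition \ref{def:omega}), where $\Omega_m$ is invertible by Assumption \ref{as:nd_eigvals}. Fixing an orthogonal matrix $P$ and setting $\eta_m(x):=P(L_m\Omega_m)^{-1/2}\nabla F_m(x)$, this $\eta_m(x)$ is a standard $d$-dimensional Gaussian vector, independent of $F_m(x)$, and one checks at once that $|\nabla F_m(x)|=L_m^{1/2}\langle\eta_m(x),(P\Omega_m P^{-1})\eta_m(x)\rangle^{1/2}$. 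Then I would insert the two chaos expansions: the mollifier of $\delta_0$ expands in Hermite polynomials of the unit-variance Gaussian $F_m(x)$ as $\delta_0(t)=\tfrac{1}{\sqrt{2\pi}}\sum_{a\geq 0}\tfrac{H_a(0)}{a!}H_a(t)$ (the coefficients of $\tfrac{1}{2\eps}\mathbf{1}_{[-\eps,\eps]}$ converging to $H_a(0)\gamma(0)/a!$ as $\eps\to 0$), while, by the very definition of the coefficients $\alpha_q$ and since $\langle\eta,\Omega\eta\rangle^{1/2}$ is square-integrable,
\[
\langle\eta_m(x),(P\Omega_m P^{-1})\eta_m(x)\rangle^{1/2}=\sum_{q\in\N^d}\alpha_q(P\Omega_m P^{-1})\,\frac{H_q(\eta_m(x))}{\sqrt{q!}}\, .
\]

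Because $F_m(x)$ together with the $d$ coordinates of $\eta_m(x)$ form an orthonormal family in $\mathbf{A}_m$, each product $H_a(F_m(x))H_q(\eta_m(x))$ is, up to the scalar $\sqrt{a!\,q!}$, an element of the orthonormal basis of $C_m(a+|q|)$ recalled in Section \ref{s:chaos}. Multiplying the two series and applying $\Pi_{2k}$ therefore retains exactly the terms with $a+|q|=2k$, i.e.\ $a=2k-|q|$; reading off their coefficients in that orthonormal basis and integrating over $\T^d$ gives the stated identity
\[
\calV_m[2k]=\sqrt{\tfrac{L_m}{2\pi}}\sum_{q\in\N^d,\,|q|\leq 2k}\frac{H_{2k-|q|}(0)}{\sqrt{(2k-|q|)!\,q!}}\,\alpha_q(P\Omega_m P^{-1})\int_{\T^d}H_{2k-|q|}(F_m(x))\,H_q\!\big(P(L_m\Omega_m)^{-1/2}\nabla F_m(x)\big)\,dx\, ,
\]
the odd chaotic components vanishing automatically since $H_a(0)=0$ for odd $a$ and $\alpha_q=0$ for $|q|$ odd (Lemma \ref{lemma:computing_chaos}). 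The only departure from \cite{cammar} is the occurrence of $\Omega_m$ inside $\alpha_q$ and of the whitening map $(L_m\Omega_m)^{-1/2}$ in place of a scalar; the rest is formally identical.

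The hard part is not the algebra above but the analytic justification of the first step and of the termwise handling of the (only formally convergent) expansion of $\delta_0$: one must show that the mollified functionals converge to $\calV_m$ in $L^2(\mathbb{P})$ and that the chaotic projection may be exchanged with the integral over $\T^d$. This is by now standard — it is exactly the content of Section 4.2 and Appendix A of \cite{cammar} — and none of it is affected by the anisotropy of $\nabla F_m(x)$.
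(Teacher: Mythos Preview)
Your proposal is correct and follows essentially the same approach as the paper: mollify $\calV_m$ by $\frac{1}{2\eps}\mathbf{1}_{[-\eps,\eps]}(F_m)\,|\nabla F_m|$, invoke the $L^2(\mathbb{P})$-convergence from \cite{cammar}, use the independence of $F_m(x)$ and $\nabla F_m(x)$ to factor the chaos expansion, whiten the gradient via $\xi=P(L_m\Omega_m)^{-1/2}\nabla F_m(x)$ so that $|\nabla F_m(x)|=\sqrt{L_m}\langle\xi,P\Omega_m P^{-1}\xi\rangle^{1/2}$, and then read off the terms of total degree $2k$. The paper's proof is the same outline with the same references, differing only in presentation.
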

\begin{proof}
In \cite{cammar}, the author shows that, as $\eps\to+\infty$, the random variable $\calV_m^\eps=\int_{\T^d}\frac{1}{2\eps}\mathbf{1}_{[|F_m(x)|\leq\eps]}\|\nabla F_m(x)\|dx$ converges in $L^2$ to $\calV_m$. In particular, its chaos expansion converges to that of $\calV_m$. The author of \cite{cammar} then computes the expansion of $\calV_m^\eps$ for a fixed $\eps>0$ by arguing as follows. For a fixed $x\in\T^d$, $F(x)$ and $\nabla F_m(x)$ are independent so the decomposition of $\frac{1}{2\eps}\mathbf{1}_{[|F_m(x)|\leq\eps]}\|\nabla F_m(x)\|$ can be easily computed from the decomposition of the two factors. The $k$-th chaos of the factor $\frac{1}{2\eps}\mathbf{1}_{[|F_m(x)|\leq\eps]}$ converges, as $\eps\to 0$, to
\[
\lim_{\eps\to 0}\frac{1}{2\pi}\mathbf{1}_{[|F_m(x)|\leq\eps]}[k]=\frac{1}{\sqrt{2\pi}}H_k(0)H_k(F_m(x))\, .
\]
The only difference in our situation is that $\nabla F_m(x)$ is not proportional to a standard Gaussian vector. Instead, we write
\[
\|\nabla F_m(x)\|=\sqrt{L_m}\langle P^{-1}\xi,\Omega_m P^{-1}\xi\rangle^{1/2}
\]
where $\xi=P(L_m\Omega_m)^{-1/2}\nabla F_m(x)$ is a standard Gaussian vector by Definition \eqref{def:omega}. Thus, multiplying the two expansions and integrating over $x$, we get, for each $k\in\N$,
\[
\calV_m[2k]=\sqrt{\frac{L_m}{2\pi}}\sum_{|q|\leq 2k}\frac{H_{2k-q}(0)}{\sqrt{(2k-|q|)!q!}}\alpha_q(P\Omega_mP^{-1})\int_{\T^d}H_{2k-|q|}(F_m(x))H_q\left(P(L_m\Omega_m)^{-1/2}\nabla F_m(x)\right)dx
\]
as announced.
\end{proof}

\begin{proof}[Proof of Proposition \ref{prop:second_chaos}]
Let $v_1,\dots,v_d$ be an orthonormal basis or eigenvectors of $\Omega_m$ with eigenvalues $\omega_1,\dots,\omega_d$ and let $P$ be the orthogonal matrix whose rows are the vectors $v_j$. Then, for each $x\in\T^d$, the random variables $a_j(x)=(L_m\omega_j)^{-1/2}\langle \nabla F_m(x),v_j\rangle$ for $j=1,\dots,d$ are independent standard normals independent from $a_{d+1}(x)=F_m(x)$. Applying Lemma \ref{lemma:volume_chaos} with the matrix $P$ and $k=1$ we deduce that 
\begin{align*}
\calV_m[2]&=\sqrt{\frac{L_m}{2\pi}}\sum_{q\in\N^d,\, |q|\leq 2} \frac{H_{2-|q|}(0)}{\sqrt{(2-|q|)!q!}}\alpha_q(P\Omega_mP^{-1})\int_{\T^d} H_{2-|q|}(a_{d+1}(x))H_q\left(a_1(x),\dots,a_d(x)\right)dx\\
&=\sqrt{\frac{L_m}{4\pi}}\left[-\alpha_0(P\Omega_m P^{-1})\int_{\T^d} F_m(x)^2-1dx+\sum_{j=1}^d\alpha_{2(\delta_{j,i})_i}(P\Omega_m P^{-1})\int_{\T^d} a_j(x)^2-1dx\right]\\
&=\sqrt{\frac{L_m}{4\pi}}\frac{1}{\calN_m}\sum_{\lambda\in\Lambda_m}\left[\sum_{j=1}^d\alpha_{2(\delta_{j,i})_i}(P\Omega_m P^{-1})(\omega_j^{-1}\langle\lambda_j,v_j\rangle^2|\zeta_\lambda|^2-1)-\alpha_0(P\Omega_mP^{-1})(|\zeta_\lambda|^2-1)\right]\, .
\end{align*}
By Lemma \ref{lemma:computing_chaos}, since $P\Omega_m P^{-1}$ is diagonal, for each $j\in\{1,\dots,d\}$,
\[
\alpha_{2(\delta_{j,i})_i}(P\Omega_m P^{-1})=\frac{1}{\sqrt{2\pi}}\int_0^\infty\det(I_d1+t\Omega_m)^{-1/2}(1+t\omega_j)^{-1}\frac{\omega_jdt}{t^{1/2}}
\]
Hence,
\[
\sum_{j=1}^d\alpha_{2(\delta_{j,i})_i}(P\Omega_m P^{-1})(\omega_j^{-1}\langle\lambda_j,v_j\rangle^2|\zeta_\lambda|^2-1)=\Upsilon(\Omega_m)\Xi_m(\lambda)|\zeta_\lambda|^2-\tr(\Psi(\Omega_m)\Omega_m)\, .
\]
Now, either by direct computation, or using the fact that $\calV_m[2]$ is centered by definition, conclude that
\begin{equation}\label{eq:2_chaos_1}
\frac{1}{\calN_m}\sum_{\lambda\in\Lambda_m}\sum_{j=1}^d\alpha_{2(\delta_{j,i})_i}(P\Omega_m P^{-1})(\omega_j^{-1}\langle\lambda_j,v_j\rangle^2|\zeta_\lambda|^2-1)=\frac{1}{\calN_m}\sum_{\lambda\in\Lambda_m}\Upsilon(\Omega_m)\Xi_m(\lambda)(|\zeta_\lambda|^2-1)\, .
\end{equation}
On the other hand, by Lemma \ref{lemma:computing_chaos},
\[
\alpha_0(P\Omega_mP^{-1})=\alpha_0(\Omega_m)=\Upsilon(\Omega_m)\, .
\]
Using this observation and \eqref{eq:2_chaos_1} in our initial computation, we deduce that
\[
\calV_m[2]=\sqrt{\frac{L_m}{2\pi}}\Upsilon(\Omega_m)\frac{1}{\calN_m}\sum_{\lambda\in\Lambda_m}\left(\Xi_m(\lambda)-1\right)(|\zeta_\lambda|^2-1)
\]
as announced. In particular, since the $\zeta_\lambda$ are independent standard Gaussians,
\[
\textup{Var}(\calV_m[2])=\frac{L_m}{4\pi}\Upsilon(\Omega_m)^2\frac{1}{\calN_m^2}\sum_{\lambda\in\Lambda_m}\left(\Xi_m(\lambda)-1\right)^2=\frac{\pi}{\calN_m^2}\E[\calV_m]^2\sum_{\lambda\in\Lambda_m}\left(\Xi_m(\lambda)-1\right)^2\, .
\]
In the last equality we used Proposition \ref{prop:expectation}.
\end{proof}

\section{Ruling out Berry cancellation for certain ellipsoids}\label{s:anticancellation}

In this section we assume that $p(x)=ax_1^2+x_2^2+x_3^2$ where $a$ is a large positive integer. Our aim is to prove that for all large enough values of $a$, Berry cancellation does not occur. In other words, we will prove the following result. Recall the notations $\Omega$ from \eqref{eq:omega_def}, $\Psi$ from \eqref{eq:upsilon_psi_definition} and $\Sigma_p=\{x\in\R^3\, :\, p(x)=1\}$.

\begin{prop}\label{prop:anticancellation}
Assume that $p(x)=ax_1^2+x_2^2+x_3^2$. There exists $a_0\in\N$ such that if $a\geq a_0$ then $\Psi(\Omega)^{-1/2}\Sigma_p$ is not a sphere.
\end{prop}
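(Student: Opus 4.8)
The plan is to reduce everything to the behaviour of two one‑dimensional integrals and exploit the symmetries of $p$. Since $p(x)=ax_1^2+x_2^2+x_3^2$ involves only even powers of the $x_i$ and is symmetric in $x_2,x_3$, the measure $d\sigma_p$ from Definition~\ref{def:equidistribution_measure} is invariant under each reflection $x_i\mapsto-x_i$ and under $x_2\leftrightarrow x_3$. Hence the matrix $\Omega$ of \eqref{eq:omega_def} is diagonal, $\Omega=\mathrm{diag}(\omega_1,\omega_2,\omega_2)$, with
\[
\omega_1=\frac{1}{4\pi}\int_{S^2}\frac{u_1^2\,du}{a u_1^2+u_2^2+u_3^2}\, ,\qquad
\omega_2=\frac{1}{4\pi}\int_{S^2}\frac{u_2^2\,du}{a u_1^2+u_2^2+u_3^2}\, ,
\]
where I write $u$ for the variable on $S^2$. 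Then $(I_3+t\Omega)^{-1}$ is diagonal, so by \eqref{eq:upsilon_psi_definition} $\Psi(\Omega)=\mathrm{diag}(\psi_1,\psi_2,\psi_2)$ with
\[
\psi_1=\int_0^\infty\frac{dt}{\sqrt t\,(1+t\omega_1)^{3/2}(1+t\omega_2)}\, ,\qquad
\psi_2=\int_0^\infty\frac{dt}{\sqrt t\,(1+t\omega_1)^{1/2}(1+t\omega_2)^2}\, .
\]
Writing $y=\Psi(\Omega)^{-1/2}x$, i.e.\ $x=(\sqrt{\psi_1}\,y_1,\sqrt{\psi_2}\,y_2,\sqrt{\psi_2}\,y_3)$, the set $\Psi(\Omega)^{-1/2}\Sigma_p$ is the quadric $\{a\psi_1 y_1^2+\psi_2 y_2^2+\psi_2 y_3^2=1\}$, which is a sphere if and only if $a\psi_1=\psi_2$. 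So it suffices to prove that $a\psi_1/\psi_2\to+\infty$ as $a\to+\infty$.

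Next I would record two elementary estimates for the eigenvalues. Bounding the denominator below by $au_1^2$ gives immediately $\omega_1\le 1/a$. For a lower bound on $\omega_2$, restrict the integral to the equatorial band $B_a=\{u\in S^2:\ |u_1|\le a^{-1/2}\}$, which has surface area $4\pi a^{-1/2}$ by Archimedes's hat‑box theorem; on $B_a$ the denominator equals $1+(a-1)u_1^2\le 2$ while $u_2^2+u_3^2=1-u_1^2\ge 1/2$, and using the $u_2\leftrightarrow u_3$ symmetry of $B_a$ to replace $u_2^2$ by $\tfrac12(u_2^2+u_3^2)$ one obtains $\omega_2\gg a^{-1/2}$. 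In particular $\omega_1/\omega_2\ll a^{-1/2}\to 0$ as $a\to\infty$ (and $\omega_1,\omega_2\to 0$).

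The key step is then the asymptotics of $\psi_1,\psi_2$: substitute $t=s/\omega_2$, so that $1+t\omega_2=1+s$ and $1+t\omega_1=1+s(\omega_1/\omega_2)$, to get
\[
\sqrt{\omega_2}\,\psi_1=\int_0^\infty\frac{ds}{\sqrt s\,(1+s\,\omega_1/\omega_2)^{3/2}(1+s)}\, ,\qquad
\sqrt{\omega_2}\,\psi_2=\int_0^\infty\frac{ds}{\sqrt s\,(1+s\,\omega_1/\omega_2)^{1/2}(1+s)^2}\, .
\]
Since $\omega_1/\omega_2\to 0$, the integrands converge pointwise and are dominated by the integrable functions $s^{-1/2}(1+s)^{-1}$ and $s^{-1/2}(1+s)^{-2}$ respectively, so dominated convergence yields $\sqrt{\omega_2}\,\psi_1\to B(1/2,1/2)=\pi$ and $\sqrt{\omega_2}\,\psi_2\to B(1/2,3/2)=\pi/2$. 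Therefore $a\psi_1/\psi_2=a\cdot(\sqrt{\omega_2}\,\psi_1)/(\sqrt{\omega_2}\,\psi_2)\sim 2a\to+\infty$, so $a\psi_1\ne\psi_2$ for all $a$ exceeding some $a_0$, which is the assertion.

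I expect the only delicate point to be the separation $\omega_1/\omega_2\to 0$ (equivalently, that the very flat direction $x_1$ of $\Sigma_p$ does not dominate $\Omega$ too strongly): without it the dominated‑convergence step collapses and the two limiting Beta integrals could in principle agree. If one prefers an entirely explicit route, both $\omega_1$ and $\omega_2$ can be computed in closed form via $u_1=\cos\theta$ in terms of $\arctan\sqrt{a-1}$, giving $\omega_1\sim 1/a$ and $\omega_2\sim\tfrac{\pi}{4}a^{-1/2}$ and making the separation manifest; either way the conclusion follows.
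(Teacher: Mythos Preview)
Your proof is correct and follows essentially the same route as the paper's: both reduce to showing $a\psi_1\neq\psi_2$ via the diagonality of $\Omega$ and $\Psi(\Omega)$, then use the substitution $s=\omega_2 t$ together with the separation $\omega_1/\omega_2\to 0$ to conclude that $\psi_1$ and $\psi_2$ have the same order while $a\psi_1$ dominates. Your dominated-convergence step extracting the exact limits $\pi$ and $\pi/2$ is a slight sharpening of the paper's two-sided $\asymp$ bounds, and your Archimedes-band estimate for $\omega_2\gg a^{-1/2}$ replaces the paper's direct change of variables, but these are cosmetic differences rather than a different approach.
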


In order to prove this proposition, we first observe that, with this choice of $p$, $\Sigma_p$ is left invariant by the transformations $x_i\mapsto -x_i$ so that $\Omega$ is diagonal. In particular, so is $\Psi(\Omega)$ and $\Psi(\Omega)^{-1/2}\Sigma_p$ is given by the equation
\[
\left\{a\Psi(\Omega)_{11}y_1^2+\Psi(\Omega)_{22}y_2^2+\Psi(\Omega)_{33}y_3^2=1\right\}\, .
\]
Hence, in order to prove Proposition \ref{prop:anticancellation}, it suffices to show that $a\Psi(\Omega)_{11}\neq\Psi(\Omega)_{22}$, say. By definition of $\Psi$, for $i=1,2,3$, we have
\begin{equation}\label{eq:anticancellation_2}
\Psi(\Omega)_{ii}=\int_0^\infty t^{-1/2}(1+\Omega_{ii}t)^{-3/2}\prod_{j\neq i}(1+\Omega_{jj}t)^{-1/2}dt\, .
\end{equation}
Moreover, following the expression for $d\sigma_p$ given by Definition \ref{def:equidistribution_measure}, we get, for $i=1,2,3$,

\begin{equation}\label{eq:anticancellation_1}
\Omega_{ii}=\frac{1}{|S^2|}\int_{S^2}\frac{\omega_i^2d\omega}{a\omega_1^2+\omega_2^2+\omega_3^2}\, .
\end{equation}

\begin{proof}[Proof of Proposition \ref{prop:anticancellation}]
The preceding discussion reduces the proof to showing that, for all $a$ large enough, $a\Psi(\Omega)_{11}\neq \Psi(\Omega)_{22}$, where $\Psi(\Omega)_{ii}$ are expressed in terms of $a$ by \eqref{eq:anticancellation_2} and \eqref{eq:anticancellation_1} above. We begin by estimating the behaviour coefficients $\Omega_ii$ as $a\to+\infty$.
\begin{claim}\label{cl:anticancellation_1}
Uniformly for all $a\geq 2$,
\[
\Omega_{22}\asymp a^{-1/2}\, .
\]
\end{claim}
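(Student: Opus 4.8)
The plan is to reduce the surface integral \eqref{eq:anticancellation_1} defining $\Omega_{22}$ to an elementary one-dimensional integral and estimate that directly. First, since every $\omega\in S^2$ satisfies $\omega_1^2+\omega_2^2+\omega_3^2=1$, the denominator in \eqref{eq:anticancellation_1} equals $a\omega_1^2+(1-\omega_1^2)=1+(a-1)\omega_1^2$. Interchanging the roles of the coordinates $\omega_2$ and $\omega_3$ (a symmetry of $S^2$ and of the integrand) gives $\Omega_{22}=\Omega_{33}$, and combining this with $\omega_2^2+\omega_3^2=1-\omega_1^2$ yields
\[
2\,\Omega_{22}=\frac{1}{|S^2|}\int_{S^2}\frac{1-\omega_1^2}{1+(a-1)\omega_1^2}\,d\omega\, .
\]
By the Archimedes hat-box theorem, the pushforward of $\frac{1}{|S^2|}d\omega$ under $\omega\mapsto\omega_1$ is $\frac12\mathbf{1}_{[-1,1]}(u)\,du$, so, writing $b=a-1\geq 1$ and using that the integrand is even in $u$,
\[
2\,\Omega_{22}=\int_0^1\frac{1-u^2}{1+bu^2}\,du\, .
\]

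I would then estimate this last integral. One clean route is exact evaluation: writing $\frac{1-u^2}{1+bu^2}=-\frac1b+\frac{b+1}{b}\cdot\frac{1}{1+bu^2}$ and integrating gives
\[
2\,\Omega_{22}=-\frac1b+\frac{b+1}{b}\cdot\frac{\arctan\sqrt b}{\sqrt b}\, ,
\]
whence $\sqrt a\,\Omega_{22}\to \pi/4$ as $a\to+\infty$; since $a\mapsto\Omega_{22}$ is continuous and strictly positive on $[2,+\infty)$, this forces $\sqrt a\,\Omega_{22}$ to stay between two positive constants on $[2,+\infty)$, i.e. $\Omega_{22}\asymp a^{-1/2}$. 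Alternatively, and avoiding any continuity argument, the upper bound follows from $\frac{1-u^2}{1+bu^2}\leq\frac{1}{1+bu^2}$, giving $2\,\Omega_{22}\leq \frac{\arctan\sqrt b}{\sqrt b}\leq\frac{\pi}{2\sqrt b}\ll a^{-1/2}$, and the lower bound by keeping only the interval $[0,\tfrac{1}{2\sqrt b}]$, on which $1-u^2\geq\tfrac34$ and $1+bu^2\leq\tfrac54$, so $2\,\Omega_{22}\geq\tfrac{1}{2\sqrt b}\cdot\tfrac35\gg a^{-1/2}$, using $b=a-1\asymp a$ for $a\geq 2$.

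There is no genuine obstacle here; the computation is elementary. The only point requiring a little care is the \emph{uniform} lower bound — ensuring the implied constant does not degenerate as $a\to+\infty$ — and this is handled either by the closed form above or by the truncation to a window of width $\asymp a^{-1/2}$ around $u=0$.
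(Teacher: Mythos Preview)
Your proof is correct and follows essentially the same route as the paper: reduce the surface integral for $\Omega_{22}$ to a one-dimensional integral in the height variable $\omega_1$ via the cylindrical (Archimedes) projection, then bound that integral above by $\int_0^1\frac{du}{1+bu^2}\asymp b^{-1/2}$ and below by restricting to a window of width $\asymp b^{-1/2}$ near $u=0$. Your use of the symmetry $\Omega_{22}=\Omega_{33}$ to replace $\omega_2^2$ by $\tfrac12(1-\omega_1^2)$ before projecting, together with the closed-form evaluation, is a mild streamlining of the paper's argument but not a genuinely different approach.
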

\begin{proof}
Applying the change of variables $(\omega_1,\omega_2,\omega_3)=(t,\sqrt{1-t^2}\cos(\theta),\sqrt{1-t^2}\sin(\theta))$ we get
\begin{align*}
\int_{S^2}\frac{\omega_2^2d\omega}{a\omega_1^2+\omega_2^2+\omega_3^2}&=\int_{-1}^1\int_0^{2\pi}\frac{(1-t^2)\cos(\theta)^2}{at^2+1-t^2}(1-t^2)^{1/2}d\theta dt\\
&=\pi\int_{-1}^1\frac{(1-t^2)^{3/2}}{at^2+1-t^2} dt\, .
\end{align*}
As for the integral, on the one hand, for some universal constant $c>0$,
\[
\int_{-1}^1\frac{(1-t^2)^{3/2}}{at^2+1-t^2} dt\geq c \int_{-1/2}^{1/2}\frac{dt}{at^2+1}\sim c a^{-1/2}\int_{-\infty}^\infty \frac{ds}{s^2+1}
\]
and on the other hand,
\[
\int_{-1}^1\frac{(1-t^2)^{3/2}}{at^2+1-t^2} dt\leq \int_{-1}^1\frac{dt}{(a-1)t^2+1}\sim a^{-1/2}\int_{-\infty}^\infty \frac{ds}{s^2+1}\, .
\]
All in all, $\Omega_{22}\asymp a^{-1/2}$ as announced.
\end{proof}
To estimate $\Omega_{11}$ observe that $a\Omega_{11}+\Omega_{22}+\Omega_{33}=1$ and that $\Omega_{22}=\Omega_{33}$ so that, by Claim \ref{cl:anticancellation_1},
\begin{equation}\label{eq:anticancellation_3}
\Omega_{11}\sim a^{-1}\, .
\end{equation}
We can now estimate $\Psi(\Omega)_{11}$ and $\Psi(\Omega)_{22}$. In both cases we apply the change of variables $s=\Omega_{22}t$, which yields
\begin{align*}
\Psi(\Omega)_{11}&=\Omega_{22}^{-1/2}\int_0^\infty \frac{ds}{s^{1/2}(1+(\Omega_{11}/\Omega_{22})s)^{3/2}(1+s)}\\
\Psi(\Omega)_{22}&=\Omega_{22}^{-1/2}\int_0^\infty \frac{ds}{s^{1/2}(1+s)^2(1+(\Omega_{11}/\Omega_{22})s)^{1/2}}\, .
\end{align*}
Since $\Omega_{11}\ll \Omega_{22}$ by Claim \ref{cl:anticancellation_1} and \eqref{eq:anticancellation_3},
\begin{align*}
c\int_0^\infty\frac{ds}{s^{1/2}(1+s)^{5/2}}&\leq \Omega_{22}^{1/2}\Psi(\Omega)_{11}\leq \int_0^\infty\frac{ds}{s^{1/2}(1+s)}\\
c\int_0^\infty\frac{ds}{s^{1/2}(1+s)^{5/2}}&\leq \Omega_{22}^{1/2}\Psi(\Omega)_{22}\leq \int_0^\infty\frac{ds}{s^{1/2}(1+s)^2}
\end{align*}
for some universal $c>0$. All in all, $\Psi(\Omega)_{11}\asymp\Psi(\Omega)_{22}\asymp a^{1/4}$ as $a\to+\infty$. In particular, for all $a$ large enough, $a\Psi(\Omega)_{11}>\Psi(\Omega)_{11}$ and the proof is complete.
\end{proof}
\appendix 

\section{On lattice point counts on ellipses}\label{s:upper_bd_proof}

In this section we provide a proof for Lemma \ref{lemma:upper_bd_ellipses}. We begin with the following proposition, which is an immediate consequence of the discussion of section II.A of \cite{cilcor}.
\begin{prop}[\cite{cilcor}, section II.A]\label{prop:upper_bd_ellipses}
Let $p$ be a bivariate quadratic form with integer coefficients. Then, for each $m\in\N$, $m\geq 1$, the number $\calN(m;p)$ of points $(x,y)\in\Z^2$ such that $p(x,y)=m$ satisfies the following bound. Consider the prime decomposition of $m$:
\[
m=\prod_{p\in\N,\textup{ prime}}p^{\alpha_p}\, .
\]
Then,
\[
\calN(m;p)\leq 6\prod_{p\in\N,\textup{ prime}}(1+\alpha_p)\, .
\]
If, moreover, $\calN(m;p)>0$, then
\[
\calN(m;p)\geq \prod_{p\in\N,\textup{ prime}}(1+\alpha_p)\, .
\]
\end{prop}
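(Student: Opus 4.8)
The plan is to reduce Proposition~\ref{prop:upper_bd_ellipses} to the classical dictionary between binary quadratic forms and ideals in imaginary quadratic orders; the estimate is then essentially bookkeeping, which is what is carried out in \cite[\S II.A]{cilcor}. First I would remove the content: writing $g=\gcd(a,b,c)$ so that $p=gq$ with $q$ primitive positive definite of discriminant $\Delta=b^2-4ac<0$, one has $\calN(m;p)=0$ unless $g\mid m$, in which case $\calN(m;p)=\calN(m/g;q)$, and since $m/g\mid m$ the quantity $\prod_\ell(1+v_\ell(\cdot))$ only decreases; so it suffices to treat primitive forms.

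For a primitive form $q$ of discriminant $\Delta$, the proper equivalence classes of such forms are in bijection with the class group of the order $\mathcal{O}_\Delta$, and the number $r_q(n)$ of representations of $n$ by $q$ equals $w_\Delta$ times the number of invertible $\mathcal{O}_\Delta$-ideals of norm $n$ lying in the ideal class attached to $q$ (or its inverse class, since opposite forms represent the same integers), where $w_\Delta=|\mathcal{O}_\Delta^\times|\in\{2,4,6\}$ with $w_\Delta=6$ only for $\Delta=-3$ and $w_\Delta=4$ only for $\Delta=-4$. Summing over all classes, $\sum_{[Q]}r_Q(n)=w_\Delta\,\rho_\Delta(n)$, where $\rho_\Delta(n)$ counts all invertible ideals of norm $n$. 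The function $\rho_\Delta$ is multiplicative, with $\rho_\Delta(\ell^a)\le a+1$ at every prime power (the value $a+1$ occurring exactly at primes that split in $\mathcal{O}_\Delta$, and the value being $0$ or $1$ at inert and ramified primes), whence $\rho_\Delta(n)\le\prod_\ell(1+v_\ell(n))$.

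The upper bound is then immediate: $\calN(m;p)=\calN(m/g;q)\le\sum_{[Q]}r_Q(m/g)=w_\Delta\rho_\Delta(m/g)\le 6\prod_\ell(1+v_\ell(m/g))\le 6\prod_p(1+\alpha_p)$, with an absolute constant $6$ uniform in $p$, which is exactly what Lemma~\ref{lemma:upper_bd_ellipses} needs since the divisor function is $O_\eps(m^\eps)$. For the lower bound one assumes $\calN(m;p)>0$, so $g\mid m$ and $q$ represents $n=m/g$; in particular $\rho_\Delta(n)>0$ and every inert prime divides $n$ to an even power. Fixing an invertible ideal $\mathfrak{a}_0$ of norm $n$ in the class of $q$, one tracks each ideal of norm $n$ through its local factorization and translates it by $\mathfrak{a}_0^{-1}$, producing the required collection of representations of $n$ by $q$.

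The step I expect to be the main obstacle is this last one: pinning down the precise constant in the lower bound requires controlling how the $\rho_\Delta(n)$ ideals of norm $n$ are distributed among the ideal classes, together with the contribution of the primes dividing the conductor of $\mathcal{O}_\Delta$ and the bookkeeping of the imprimitivity introduced when passing from $p$ to $q$ — this is precisely the content of the discussion in \cite[\S II.A]{cilcor}. The upper bound, by contrast, uses only the trivial inequality $\calN(m;p)\le\sum_{[Q]}r_Q(m/g)$ and is genuinely immediate from the dictionary.
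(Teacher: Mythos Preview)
The paper does not actually prove this proposition; it simply records it as ``an immediate consequence of the discussion of section II.A of \cite{cilcor}'' and then uses only the \emph{upper} bound to derive Lemma~\ref{lemma:upper_bd_ellipses}. So there is no paper proof to compare against, only the implicit reference to the form--ideal dictionary, which is exactly the route you take.

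Your argument for the upper bound is correct and is the standard one: reduce to a primitive form $q$, bound $r_q(n)$ by the total $\sum_{[Q]}r_Q(n)=w_\Delta\rho_\Delta(n)$, and use $w_\Delta\le 6$ together with $\rho_\Delta(\ell^a)\le a+1$. This is all that is needed for the rest of the paper.

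Your instinct that the lower bound is the real obstacle is well placed --- in fact the lower bound is \emph{false} as stated. Take $p(x,y)=x^2+y^2$ and $m=81=3^4$: the only integer solutions of $x^2+y^2=81$ are $(\pm 9,0)$ and $(0,\pm 9)$, so $\calN(81;p)=4$, while $\prod_\ell(1+\alpha_\ell)=5$. (More generally, any inert prime raised to a large even power gives a counterexample, and the imprimitive reduction you flag makes things worse, not better, for the lower inequality.) So the issue is not that the distribution of ideals among classes is hard to control; it is that the claimed inequality cannot hold with $\prod(1+\alpha_p)$ on the right-hand side. Whatever is actually established in \cite{cilcor}, the lower bound must involve a smaller arithmetic function (e.g.\ restricting the product to split primes), and the paper has transcribed it imprecisely. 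Since the paper never uses the lower half of the proposition, this does not affect any of its results --- but your proof sketch for that half should be replaced by a remark that the inequality is false as written.
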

Let us now prove Lemma \ref{lemma:upper_bd_ellipses} using Proposition \ref{prop:upper_bd_ellipses}.
\begin{proof}[Proof of Lemma \ref{lemma:upper_bd_ellipses}]
We start by fixing $\eps>0$. Then, we define $A=A(\eps)\in\N$ such that for each $\alpha\geq A$ and each prime $p$ (the most restrictive case being $p=2$),
\begin{equation}\label{eq:upper_bd_ellipses_1}
(1+\alpha)\leq p^{\eps\alpha}\, .
\end{equation}
Given $m\in\N$, $m\geq 1$, we write its prime decomposition as follows
\[
m=\prod_{p\in\N,\textup{ prime}}p^{\alpha_p}\, .
\]
We then define
\[
m_1=\prod_{p,\, \alpha_p< A}p^{\alpha_p}\, ;\ m_2=\prod_{p,\, \alpha_p\geq A}p^{\alpha_p}\textup{ so that }m_1m_2=m\, .
\]
Then, by \eqref{eq:upper_bd_ellipses_1}
\begin{equation}\label{eq:uper_bd_ellipses_2}
\prod_{p,\, \alpha_p\geq A}(1+\alpha_p)\leq m_2^{\eps}\, .
\end{equation}
On the other hand, writing $T$ for the number of prime factors $p$ for which $1\leq \alpha_p< A$, we get
\[
\prod_{p\, \alpha_p<A}(1+\alpha_p)\leq A^T
\]
and
\[
m_1\geq T!\, . 
\]
But, having fixed $A$ and $\eps>0$, $A^T=O((T!)^\eps)$ so that
\begin{equation}\label{eq:upper_bd_ellipses_3}
\prod_{p,\, \alpha_p<A}(1+\alpha_p)=O(m_2^\eps)\, .
\end{equation}
But since $m_1m_2=m$, \eqref{eq:uper_bd_ellipses_2} and \eqref{eq:upper_bd_ellipses_3} yield the desired result.
\end{proof}
\section{On equidistribution of lattice points on ellipses}\label{s:2D_equidistribution}

In this section we rephrase a result from \cite{dias00} in terms that are closer to the topic of this paper. In particular, we will denote by $\mathfrak{D}=\{3,4,7,8,\dots,3315,5460\}$ the set of numbers listed in section 4.1 of \cite{dias00}.\\

As in \cite{dias00} we will work in dimension $d=2$ throughout this section. In particular, the quadratic form $p$ we consider takes the form
\[
p(x,y)=ax^2+bxy+cy^2
\]
where $a,b,c\in\Z$. Since the values $p$ takes for integer entries must be multiples of the g.c.d. of $(a,b,c)$, we assume in addition that $\textup{gcd}(a,b,c)=1$. Quadratic forms with this property will be called \textit{primitive}.\\

Since $p$ is definite positive, the polynomial $p(x,1)$ has a unique root $\tau\in\C$ with strictly positive imaginary part. The results will be expressed using the following, injective mapping:
\begin{align*}
\Z^2&\rightarrow\C\\
(x,y)&\mapsto \alpha_{x,y}:=a(x-\overline{\tau}y)\, .
\end{align*}
Note that for any $(x,y)\in\Z^2$,
\begin{equation}\label{eq:norm_of_alpha}
|\alpha_{x,y}|^2=p(x,y)\, .
\end{equation}
We wish to state a result estimating the angular equidistribution of the points $\alpha_{x,y}$ of fixed norm for $(x,y)\in\Z^2$. This equidistribution is measured using the following quantities. For each $m\in\N$:
\[
\calN_m=\{(x,y)\in\Z^2\, :\, |\alpha_{x,y}|^2=m\}\, .
\]
Note that by \eqref{eq:norm_of_alpha}, $\calN_m$ coincides with the definition used in the rest of the paper (i.e., the number of lattice points $\lambda\in\Z^d$ such that $p(\lambda)=m$).
\[
\Delta_p(m)=\max\left\{\textup{card}^*\{(x,y)\in\Z^2\, :\, |\alpha_{x,y}|^2=m,\, \textup{arg}(\alpha_{x,y})\in[\theta_1,\theta_2]\}-(\theta_2-\theta_1)\calN_m\, :\, 0\leq\theta_1<\theta_2\leq 2\pi\right\}\, .
\]
Here $\textup{arg}(\alpha)$ denotes the argument of $\alpha$ in $]0,2\pi]$ and $\textup{card}^*$ means that the points $(x,y)\in\Z^2$ such that $\textup{arg}(\alpha_{x,y})\in\{\theta_1,\theta_2\}$ count for $\frac{1}{2}$ instead of $1$.\\

Combining Lemma 5 and Theorem 3 of \cite{dias00}, one obtains the following result.

\begin{thm}\label{thm:dias}
Assume that the positive definite quadratic form $p(x,y)=ax^2+bxy+cy^2$ is primitive and that the discriminant $-\delta=b^2-4ac$ is such that $\delta\in\mathfrak{D}$. Then, for each $\eps>0$ there exists $C=C(p,\eps)<\infty$ such that for each $T>0$, the set $E'_T$ of integers $m\in[0,T]$ such that
\[
\Delta_p(m)\leq C\frac{\calN_m}{(\log T)^{\frac{1}{2}\log(\frac{\pi}{2})-\eps}}
\]
and the set $E_T$ of integers $m\in[0,T]$ for which $\calN_m>0$ satisfy the property that 
\[
\lim_{T\to+\infty}\frac{\textup{Card}(E'_T)}{\textup{Card}(E_T)}=1\, .
\]
\end{thm}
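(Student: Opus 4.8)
The plan is to obtain Theorem~\ref{thm:dias} by translating two results of \cite{dias00}, namely Theorem~3 and Lemma~5 there, into the notation set up in the present section. The first step is to spell out the dictionary. Since $p$ is positive definite and primitive and $\tau$ is the root of $p(\cdot,1)$ in the upper half-plane, the injective map $(x,y)\mapsto\alpha_{x,y}=a(x-\overline{\tau}y)$ identifies $\Z^2$ with the lattice $\mathfrak{a}=a\Z+a\overline{\tau}\,\Z\subset\C$, which is the ideal in the imaginary quadratic order of discriminant $-\delta$ attached to $p$ by the classical correspondence between binary quadratic forms and ideal classes; this is the algebraic setting of \cite{dias00}. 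By \eqref{eq:norm_of_alpha} we have $|\alpha_{x,y}|^2=p(x,y)$, so counting the pairs $(x,y)\in\Z^2$ with $p(x,y)=m$ whose image $\alpha_{x,y}$ has argument in a prescribed arc $[\theta_1,\theta_2]$ is exactly counting the elements of $\mathfrak{a}$ of norm $m$ lying in that angular sector. Under this identification, $\calN_m$ is Dias's lattice point count for the norm $m$ and $\Delta_p(m)$ is his angular discrepancy, the half-weighting of the endpoints in $\textup{card}^*$ accounting for the automorph $\alpha\mapsto-\alpha$ of the form.

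The second step is to record what the two cited results say under this dictionary. The hypothesis $\delta\in\mathfrak{D}$ is precisely Dias's: $\mathfrak{D}$ is the (finite, explicitly listed) set of $\delta$ for which the order of discriminant $-\delta$ has one class per genus, which trivialises the genus theory and lets the relevant angular character sums be controlled through Dirichlet $L$-functions. With this hypothesis in force, Theorem~3 of \cite{dias00} provides, for each $\eps>0$, a constant $C=C(p,\eps)<\infty$, and, for each $T$, a subset of $\{m\le T:\calN_m>0\}$ of relative size tending to $1$ as $T\to\infty$ on which
\[
\Delta_p(m)\le C\,\calN_m\,(\log T)^{-\frac12\log(\pi/2)+\eps}\, .
\]
Lemma~5 of \cite{dias00} is the elementary counting input that controls $\textup{Card}(E_T)$, where $E_T=\{m\in[0,T]:\calN_m>0\}$, and, combined with Theorem~3, guarantees that the exceptional set of $m$ represented by $p$ for which the displayed bound fails has cardinality $o(\textup{Card}(E_T))$.

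The final step is purely formal: fix $\eps>0$, let $C=C(p,\eps)$ be the constant produced above, let $E'_T$ be as in the statement of Theorem~\ref{thm:dias}, and set $B_T=E_T\setminus E'_T$. By the combination of Theorem~3 and Lemma~5 of \cite{dias00} just described, $\textup{Card}(B_T)=o(\textup{Card}(E_T))$, whence $\textup{Card}(E'_T)/\textup{Card}(E_T)\to 1$, which is the claim. I expect the only genuine work to lie in the bookkeeping of the first paragraph: matching Dias's conventions --- ideals versus elements, the unit group of the order (which is $\{\pm1\}$ except for $\delta\in\{3,4\}$, so only constants are affected), primitive versus all representations, and the weighting of the sector endpoints --- against the precise definitions of $\calN_m$ and $\Delta_p(m)$ used here. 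The analytic core, namely the $(\log T)^{-\frac12\log(\pi/2)}$-type discrepancy estimate, is entirely contained in \cite{dias00} and is not reproved.
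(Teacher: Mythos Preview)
Your proposal is correct and matches the paper's approach: the paper itself offers no argument beyond the single sentence ``Combining Lemma 5 and Theorem 3 of \cite{dias00}, one obtains the following result,'' and you have simply fleshed out the dictionary between the notation here and Dias's. The extra detail you supply about the form--ideal correspondence, the role of the one-class-per-genus hypothesis, and the endpoint weighting is helpful context but not required by the paper.
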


\begin{rem}
As explained by its author, the approach of \cite{dias00} is to adapt the classical equidistribution result by Erd\"os and Hall \cite{erdhal} to the case of general quadratic forms.
\end{rem}

\section{Computation of the expected volume when $\Omega_m$ is diagonal}

In this section, we fix $m\in\N$ and assume that $\Omega_m=\alpha I_d$ for some $\alpha>0$. From Proposition \ref{prop:expectation}, the expectation of the nodal volume is
\[
\E[\calV_m]=\frac{\sqrt{L_m}}{2\pi}\Upsilon(\alpha I_d)
\]
where $\Upsilon$ is defined in \eqref{eq:upsilon_psi_definition}. Our goal is to express $\Upsilon(\alpha I_d)$ and hence $\E[\calV_m]$ in terms of $\alpha$ and $d$ using classical functions.
\begin{align*}
\Upsilon(\alpha I_d)&=\int_0^\infty 1-(1+\alpha t)^{-d/2}t^{-3/2}dt\\
&=\alpha^{1/2}\int_0^\infty 1-(1+s)^{-d/2}s^{-3/2}ds\textup{ by setting }s=\alpha t\\
&=2d\alpha^{1/2}\int_0^\infty (1+u^2)^{-\frac{d+2}{2}}du\textup{ by setting }u^2=s\textup{ and integrating by parts}\, .
\end{align*}
For each $d\in\N$, let $I_d=\int_0^\infty(1+u^2)^{-d/2}du$. Integrating by parts yields the following inductive relation for all $d>0$:
\[
I_{d+2}=\frac{d-1}{d}I_d
\]
from which we deduce that, for all $d>0$,
\[
I_{2d}=I_2\frac{(2(d-1))!}{4^{d-1}(d-1)!^2}\textup{ and }I_{2d+1}=I_3\frac{4^{d-1}(d-1)!^2}{(2d-1)!}\, .
\]
Moreover, setting $u=\tan(\theta)$ yields $I_2=\int_0^\infty \frac{du}{1+u^2}=\frac{\pi}{2}$ and $I_3=\int_0^\infty\frac{du}{(1+u^2)^{3/2}}=\frac{u}{\sqrt{1+u^2}}\Big|_{u=0}^\infty=1$. All in all,
\[
\Upsilon(\alpha I_{2d})=4d\alpha^{1/2}\times\frac{\pi}{2}\times \frac{(2d)!}{4^d d!^2}\textup{ and }\Upsilon(\alpha I_{2d+1})=2(2d+1)\alpha^{1/2}\times\frac{4^{d}d!^2}{(2d+1)!}\, .
\]
Both cases can be summarised as follows:
\begin{equation}\label{eq:upsilon_formula}
\Upsilon(\alpha I_d)=\sqrt{4\pi\alpha}\frac{\Gamma\left(\frac{d+1}{2}\right)}{\Gamma\left(\frac{d}{2}\right)}
\end{equation}
from which we get
\begin{equation}\label{eq:expectation_formula}
\E[\calV_m]=\sqrt{4\pi\alpha}\frac{\Gamma\left(\frac{d+1}{2}\right)}{\Gamma\left(\frac{d}{2}\right)}\times m^{1/2k}\, .
\end{equation}
\addcontentsline{toc}{section}{References}
\bibliographystyle{plain}
\bibliography{bibfile}
\Addresses
\end{document}